\documentclass{amsart}

\usepackage[english]{babel}

\usepackage{amsmath,mathrsfs, mathtools, amssymb}
\usepackage{xcolor}
\usepackage{dsfont}

\usepackage[paper=a4paper, margin=3cm]{geometry}

\usepackage[breaklinks]{hyperref}       
\usepackage{url}            
\usepackage{booktabs}       
\usepackage{amsfonts}       
\usepackage{nicefrac}       
\usepackage{microtype}      
\usepackage{lipsum}

\usepackage{amsmath,amssymb,amsthm,amscd}
\usepackage{amsrefs}
\usepackage{tikz,tikz-cd}
\usetikzlibrary{arrows,arrows.meta,calc,patterns}

\usepackage{graphicx}
\graphicspath{}
\DeclareGraphicsExtensions{.pdf,.png,.jpg,.jpeg}

\usepackage{yhmath}
\usepackage{enumitem}
\usepackage{breakurl}

\DeclareMathAlphabet{\mathpzc}{OT1}{pzc}{m}{it}

\newcounter{stmcounter}[section]

\newcounter{thmMaincounter}

\newtheorem{formula}{}[section]
\newtheorem{proposition}[formula]{Proposition}
\newtheorem{corollary}[formula]{Corollary}
\newtheorem{lemma}[formula]{Lemma}
\newtheorem{theorem}[formula]{Theorem}
\newtheorem{theoremM}[thmMaincounter]{Theorem}
\newtheorem{corollaryM}[thmMaincounter]{Corollary}

\theoremstyle{definition}

\newtheorem{definition}[formula]{Definition}
\newtheorem{example}[formula]{Example}

\theoremstyle{remark}

\newtheorem{remark}[formula]{Remark}

\newcommand{\Z}{\mathbb Z}
\newcommand{\R}{\mathbb R}
\newcommand{\C}{\mathbb C}

\newcommand{\N}{\mathbb N}

\newcommand{\T}{\mathbb T}
\newcommand{\Hc}{\mathbb H}
\newcommand{\eps}{\varepsilon}
\newcommand{\lb}{\lbrace}
\newcommand{\rb}{\rbrace}

\newcommand{\Cc}{\mathbf{C}}

\DeclareMathOperator{\Id}{Id}
\DeclareMathOperator{\col}{colim}

\DeclareMathOperator{\Hm}{Hom}

\DeclareMathOperator{\Top}{\mathbf{Top}}

\DeclareMathOperator{\sset}{\mathbf{sSet}}

\DeclareMathOperator{\Set}{\mathbf{Set}}

\DeclareMathOperator{\Oper}{\mathbf{Oper}}

\DeclareMathOperator{\Map}{Map}

\DeclareMathOperator{\Alg}{\mathbf{Alg}}

\DeclareMathOperator{\Cact}{\mathcal{C}act}
\DeclareMathOperator{\Sing}{Sing}
\DeclareMathOperator{\Diag}{Diag}
\DeclareMathOperator{\End}{End}
\DeclareMathOperator{\Flat}{Flat}

\DeclareMathOperator{\dset}{\mathbf{dSet}}
\DeclareMathOperator{\Ass}{Ass}
\DeclareMathOperator{\Conf}{Conf}
\DeclareMathOperator{\dTop}{\mathbf{dTop}}
\DeclareMathOperator{\Nat}{Nat}
\DeclareMathOperator{\Path}{Path}
\DeclareMathOperator{\len}{length}

\renewcommand{\leq}{\leqslant}

\renewcommand{\ge}{\geqslant}

\renewcommand{\phi}{\varphi}

\newcommand{\rcolon}{\mathrel{}\colon\!}

\tikzcdset{arrow style=tikz, diagrams={>=stealth}}

\catcode`,\active

\catcode`\,12

\begin{document}
	
	\author{Eduardo Hoefel}
	\address[E.\,Hoefel]{Universidade Federal do Paran\'a, Brazil}
	\email{hoefel@ufpr.br}
	
	\author{Grigory Solomadin}
	\address[G.\,Solomadin]{Philipps-Universit\"at Marburg, Germany}
	\email{solomadi@mathematik.uni-marburg.de}
	\title{Cellular approximations for nonsymmetric homotopy operads}

    \subjclass[2020]{18, 55P, 55P48}
    \keywords{Topological operads, homotopy algebras, cellular approximation, presheaves}

	\begin{abstract}
		In this note, for every topological colored nonsymmetric operad with CW spaces in each arity we construct a homotopy isomorphism with a CW homotopy operad (i.e. all higher homotopies are cellular with respect to the product structure and CW structure on cubes) not changing its objects.
		By using the two-sided bar construction, we rectify this to a homotopy isomorphism with a strict CW operad.
		This applies more generally to topological algebras over colored operads in sets.
		In the case of diagrams of finite CW complexes over finite posets, the resulting diagram consists of finite CW spaces enhancing the Quillen adjunction between simplicial sets and topological spaces.
		As a byproduct, we produce an embedding of topological homotopy algebras over operads into the presheaf category over the branch resolution for the category of $P$-trees generalizing the known case of topological operads.
	\end{abstract}
	
	
	\maketitle
	
	\section{Introduction}\label{sec:intro}
	
	The compatibility between a topological operad structure and cellular structures on its components has attracted considerable attention since the problem involving the Fox--Neuwirth cells and the real Fulton--MacPherson operad was noticed. In fact, the cellular decomposition suggested by Fox--Neuwirth cells does not directly give a cellular operad structure compatible with operadic composition, as was clarified after Tamarkin's counterexample and discussed, for instance, in the work of Voronov and Markl \cite{voro2000HGA,markl2015extension}. This question was later revisited from different points of view. In particular, Salvatore constructed a regular cellular decomposition of the two-dimensional Fulton--MacPherson operad compatible with operadic composition \cite{sa-22} (see also \cite{sa-19} for its isomorphism with the $W$-construction of itself). More recently, Rossi and Salvatore studied a related chain-level problem of Koszul duality for the Gravity and Hypercommutative operads, using regular CW-decompositions of the spaces $\overline{\mathcal M}_{0,n+1}$ and pointing again to the subtle question of finding cellular structures compatible with operadic composition \cite{rossi-sa24}.
	
	The present paper is motivated by the following question. Let $O$ be a topological operad whose components are CW-complexes, but whose structure maps are not necessarily cellular with respect to the given CW-structures. Would it be possible to coherently deform the operad structure into a homotopy operad structure which respects the cellular structures? In other words, can one replace $O$, up to a suitable homotopy equivalence, by a CW homotopy operad whose higher structure maps are cellular, while preserving the original cellular structure? The main result of this paper gives a positive answer to this question in the nonsymmetric colored setting. More generally, we prove such a CW-approximation result for topological homotopy algebras over colored operads in sets.
	
	Given a topological nonsymmetric $C$-colored operad $O\in \Oper_{C}\Top$ (where $\Top$ denotes the category of compactly generated weakly Hausdorff topological spaces, and continuous maps between them) for a set $C$, there exists a CW-approximation of $O$, i.e. a CW operad weakly equivalent to the given one.
	Namely, in the Quillen equivalence
	$	\begin{tikzcd}
		{|-|\colon} \sset \arrow[shift left=.5ex]{r} & \Top \colon \Sing \arrow[shift left=.5ex]{l},
	\end{tikzcd} $
	given by the geometric realization and the singular simplicial set functor, both functors are strongly symmetric monoidal, and the counit of the adjunction is a weak equivalence, inducing:
	$
	|\Sing O|\overset{\sim}{\to} O.
	$
	This CW-approximation changes the operad in every arity. 
	The cofibrant replacement using \textit{cellular $P$-algebras} leads to a CW-approximation of  algebras over topological operads~\cite{be-mo-06}.
	
	By definition, for a $C$-colored operad $P$ in sets a \textit{homotopy $P$-algebra} $A$ is a $WP$-algebra with values in a given monoidal category~\cite{we-07}, where $W$ denotes the $W$-construction of Boardman and Vogt.
	By definition, a \textit{homotopy morphism} between homotopy algebras is a homotopy algebra over the Boardman-Vogt product $P\otimes_{BV} \Delta^{1}$ suitably restricting to the leaf and root components, where $\Delta^{1}$ denotes the unary operad consisting of a unique arity $1$ operation~\cite{we-07}.
	If the maps of the homotopy algebra action are homotopy equivalences for all $1_{c}\otimes_{BV}\Delta^{1}$, $c\in C$, then the homotopy morphism is called a \textit{homotopy isomorphism}.
	We call a homotopy algebra $A$ a \textit{CW homotopy algebra} if all higher homotopies of $A$ are cellular maps with respect to the product CW structure of the standard CW structure on the cube $[0,1]^{n}$ and fixed CW structures on $A(c)$, $c\in C$.
	A \textit{CW homotopy operad} is a particular case of a CW homotopy algebra by letting $P=\mathcal{O}_{C}$ be the \textit{operad of $C$-colored operads}~\cite[\S 1.5.6, \S 1.5.7]{be-mo-06}.
	The first main result of this paper is given as follows (see Theorem~\ref{thm:gen}).
	
	\begin{theoremM}\label{thmm:cwapp}
		Let $P$ be any unital, non-symmetric, $C$-colored operad in sets and let $A\in \Alg(WP,\Top)$ be any topological homotopy $P$-algebra such that $A(c)$ is a CW complex for every $c\in C$.
		Then there exists a homotopy isomorphism $A\Rightarrow A_{cw}$ with a CW homotopy $P$-algebra $A_{cw}$ such that $A_{cw}(c)=A(c)$ holds for all $c\in C$.
	\end{theoremM}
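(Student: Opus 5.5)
We would build $A_{cw}$ and the homotopy isomorphism at the same time, by induction over the cellular filtration of the $W$-construction of $P\otimes_{BV}\Delta^{1}$. Recall that a $WP$-algebra structure on $A$ consists of maps
\[
\alpha_T\colon [0,1]^{E(T)}\times A(c_1)\times\dots\times A(c_n)\to A(c),
\]
one for each $P$-labelled planar tree $T$, where $E(T)$ is the set of inner edges. These maps satisfy two kinds of face conditions: an edge of length $0$ contracts the edge (composition in $P$), and an edge of length $1$ grafts the corresponding structure maps. A homotopy morphism is the same kind of data for $P\otimes_{BV}\Delta^{1}$. Concretely, trees carry a "cut" level, the parts above and below the cut are evaluated in $A$ and $A_{cw}$ respectively, and there is one extra cube coordinate for the $\Delta^1$-edges. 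We take the homotopy morphism to be the identity $A(c)\to A_{cw}(c)=A(c)$ in the unary $\Delta^1$ component. Then it is automatically a homotopy isomorphism, and the only remaining task is to construct the higher cells.

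The key tool is the relative cellular approximation theorem. Suppose $X$ is a CW complex, $Y\subseteq X$ a subcomplex, $Z$ a CW complex, and $f\colon X\to Z$ is a map that is cellular on $Y$. Then $f$ is homotopic rel $Y$ to a cellular map. Here $X=[0,1]^{k}\times\prod A(c_i)$ carries the product CW structure, which is legitimate in compactly generated spaces. We proceed by induction on the number of vertices of the tree (equivalently on $|E(T)|$), treating the trees for $A_{cw}$ and the trees for the morphism together, ordered by total cube dimension. At a tree $T$, all faces of the cube have already been defined, either by the face relations or by the induction hypothesis. The faces at $0$ are new operations coming from smaller trees. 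The faces at $1$ are composites of cellular maps, and composites and products of cellular maps are cellular. This data gives a map on $\partial[0,1]^{E(T)}\times\prod A(c_i)$, which is a subcomplex.

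To extend this map over the interior, we use the original $A$ to produce a (non-cellular) extension and then apply the relative cellular approximation. The base step is the corolla trees: $\alpha_{T}$ for a single vertex $p\in P$ is replaced by a cellular map $\beta_p$ homotopic to $\alpha_p$. Take a homotopy $H_p$ from $\alpha_p$ to $\beta_p$, and let it supply the morphism component on the tree with one cut edge. In general, the boundary data of each morphism tree is assembled from $\alpha$'s, $\beta$'s and lower homotopies. We can extend it over the whole cube because a cube with one distinguished $\Delta^1$-coordinate deformation retracts onto the part of its boundary where that coordinate is $0$ together with the faces. Concretely, $[0,1]^{k+1}$ retracts onto $\{0\}\times[0,1]^{k}\cup[0,1]\times\partial[0,1]^{k}$. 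Using this retraction, we first define the new $A_{cw}$-operation on the face $t=1$, extended rel boundary, and then make it cellular rel boundary. Unitality of $P$ is handled by declaring the identity operations strict and the unit-edge degeneracies compatible; this is arranged by always taking the retraction compatible with degeneracies.

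I expect the main obstacle to be the careful bookkeeping of this joint induction. We have to check that the boundary data really is well defined, meaning that the face identifications agree on overlaps, including the unit (degeneracy) relations, and that it is cellular on the boundary subcomplex, so that the relative cellular approximation applies. In particular, the faces where the cut passes through a vertex must be shown to reduce to already-constructed cells. The extension step itself is a homotopy extension argument for the pair (cube, horn), which poses no real problem.
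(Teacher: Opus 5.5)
Your cellular step matches the paper's final step: you make the newly created structure map cellular rel $\partial I^{m}\times\prod A(c_i)$ and repair the surrounding cell by homotopy extension. Taking the black vertex $f=(0\to_{c}1)$ to act by the identity is also harmless.

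The gap is in your model of a homotopy morphism. A $W(P\otimes_{BV}\Delta^{1})$-algebra does not attach to a $P$-tree $T$ a single cube $[0,1]\times[0,1]^{ie(T)}$ with $\theta^{A}_{T}$ at $t=0$, the new operation at $t=1$, and lateral faces given by lower data. Its cells over $T$ are the cubes of all $P\otimes_{BV}\Delta^{1}$-decorated trees over $T$:
\begin{enumerate}[label=(\alph*)]
\item the percolation trees, which have one black vertex on every leaf-to-root path;
\item their faces in which a vertex carries a black--white composite such as $f\circ p=p\circ(f,\dots,f)$ or $p\circ_{j}f$;
\item their faces in which some leaves already have colour $1$, giving operations $A(c_1)\times A_{cw}(c_2)\times\dots\to A_{cw}(c)$.
\end{enumerate}
Cells of types (b) and (c) are exactly your ``faces where the cut passes through a vertex''. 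They do not reduce to already constructed cells; they are new data. Already for a single corolla $p$, the value on the corolla decorated by $f\circ p=p\circ(f,\dots,f)$ is a free choice lying in the middle of the homotopy from $f\circ\theta^{A}_{p}$ to $\theta^{A_{cw}}_{p}\circ(f,\dots,f)$. If you fill such cells in order of dimension, their whole boundary is prescribed. Each then becomes a sphere-extension problem in $\Map(\prod A(c_i),A(c))$, which your earlier arbitrary choices may obstruct. The retraction onto $\{0\}\times[0,1]^{k}\cup[0,1]\times\partial[0,1]^{k}$ does not apply to them.

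What is missing is an order in which every new cell appears as the free face of a box filling. The paper gets this in three steps:
\begin{enumerate}[label=(\roman*)]
\item It reduces to free operads $\Omega(T)$ (Lemma~\ref{lm:red}).
\item It uses the Cisinski--Moerdijk filtration $A_{0}\subset\dots\subset A_{N}=\Omega[T]\otimes\Delta^{1}$ by percolation trees (Lemmas~\ref{lm:inter}, \ref{lm:fill}). For $i<N$ each step is inner anodyne, so the new cells, including the composite-decorated ones, are produced by the box retractions of Lemma~\ref{lm:exthorn}~$(i)$.
\item Only the last step is a root horn. It is filled using that $f(0\to_{c}1)$ is a homotopy equivalence (Lemma~\ref{lm:exthorn}~$(ii)$), and its missing face is the new structure map, to which relative cellular approximation and pasting are then applied.
\end{enumerate}

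Read literally, with lateral faces defined pointwise in $t$, your induction does correctly produce a path of $WP$-structures $WP\to(\End UA)^{I}$ from $A$ to a CW structure, and it does so even without the reduction to free operads. But that is a concordance, not a homotopy isomorphism in the sense of the statement, and converting it into a $W(P\otimes_{BV}\Delta^{1})$-algebra with identity components is not formal. Splitting a mixed tree at an edge above the black vertices forces the upper part to be evaluated at $t=0$; splitting below forces the lower part to be evaluated at $t=1$. So you cannot evaluate a whole mixed tree at one time, and supplying the conversion requires essentially the horn-filling argument above.
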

	
	The particular cases for Theorem~\ref{thmm:cwapp} are:
	\begin{enumerate}[label=(\roman*)]
		\item any topological strict $P$-algebra $A\in\Alg(P,\Top)$ (as a homotopy algebra with trivial higher homotopies);
		\item any topological $C$-colored operad $O$ (by letting $P=\mathcal{O}_{C}$) (see \S~\ref{sec:app});
		\item any (strict or homotopy coherent) diagram $D$ over a small category $\Cc$ (by considering it as an algebra over the Berger-Moerdijk unary operad $\Diag_{\Cc}$~\cite{be-mo-06}).
	\end{enumerate}
	In particular, we provide a CW homotopy structure on the Fulton-MacPherson operad, addressing Salvatore's question \cite[p.449]{sa-22} 
	in a weaker form.
	Theorem~\ref{thmm:cwapp} also implies the existence of a CW homotopy operad structure on the Deligne-Mumford operad $\{ \overline{M}_{0,n+1} \}_{n \geqslant 2}$, thus answering the weaker form of a question raised by Salvatore and Rossi \cite[\S1.1 (1)]{rossi-sa24}.
	This CW approximation applies to many other well-known topological operads, see~\S\ref{sec:app}.
	The proof of Theorem~\ref{thmm:cwapp} amounts to constructing a certain homotopy isomorphism. 
	This is carried out first by reducing the homotopy $P$-algebra case to that of a homotopy $\Omega(T)$-algebra (Lemma~\ref{lm:red}).
	Second, we fill horns of the cylinder decomposition for $W(\Omega(T)\otimes_{BV}\Delta^{1})$~\cite{ci-mo-11}, which is possible by the existence of all horn fillers (under suitable conditions) in a strict sense for homotopy algebras by following the ideas from~\cite{co-po-86}.
	Third, we use a suitable pasting of classical CW approximation homotopies at the last step of the filler construction.
	
	\begin{corollaryM}\label{corm:rect}
		Under the conditions of Theorem~\ref{thmm:cwapp}, there exists a homotopy isomorphism $A\Rightarrow \widetilde{A}_{cw}$ with a strict CW $P$-algebra $\widetilde{A}_{cw}\in \Alg(P,\Top)$.
	\end{corollaryM}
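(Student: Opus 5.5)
We obtain the corollary by composing the homotopy isomorphism of Theorem~\ref{thmm:cwapp} with a rectification, as the introduction indicates via the two-sided bar construction. First apply Theorem~\ref{thmm:cwapp} to get a homotopy isomorphism $A\Rightarrow A_{cw}$, where $A_{cw}$ is a CW homotopy $P$-algebra with $A_{cw}(c)=A(c)$. It then suffices to produce a homotopy isomorphism $A_{cw}\Rightarrow \widetilde{A}_{cw}$ with a strict $P$-algebra whose components are CW complexes and whose structure maps are cellular. After that we compose the two homotopy morphisms. Composition of homotopy morphisms is available, up to homotopy, by gluing along $\Delta^1\cup_{pt}\Delta^1$ and restricting along the subdivision $\Delta^1\to\Delta^1\cup\Delta^1$ of $W$-type operads, and homotopy isomorphisms are closed under it. Alternatively, we avoid composition entirely by feeding the rectification step directly with $A$ and using only the CW data.

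For the rectification we use the canonical map of operads $\epsilon\colon WP\to P$ and the two-sided bar construction
$$\widetilde{A}_{cw}=B(P,WP,A_{cw})=\bigl|\,[n]\mapsto P\circ (WP)^{\circ n}\circ A_{cw}\,\bigr|,$$
taken colorwise as the geometric realization of a simplicial $P$-algebra. Here $P$ is an operad in sets, and $WP$ has components that are products of cubes with discrete sets, which we give their standard CW structures. Each simplicial level is then a coproduct over trees of products of cubes and spaces $A_{cw}(c)$, and so is a CW complex. The face maps use the operadic composition of $WP$ (cellular for the cube structure, since grafting is inclusion of faces and products), the map $\epsilon$ (projection to a point), and the action $WP\circ A_{cw}\to A_{cw}$. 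The last is cellular precisely because $A_{cw}$ is a CW homotopy algebra. Degeneracies insert units and are cellular. Hence the realization $\widetilde{A}_{cw}(c)$ is a CW complex, with cells indexed by products of simplices with cells of the levels. The strict $P$-action is induced levelwise from the composition in $P$, which is a map of discrete sets tensored with identities. Since realization commutes with finite products in compactly generated spaces, this action is cellular.

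It remains to construct the homotopy isomorphism $A_{cw}\Rightarrow\widetilde{A}_{cw}$. The standard route is the zigzag $A_{cw}\xleftarrow{\sim} B(WP,WP,A_{cw})\xrightarrow{\sim} B(P,WP,A_{cw})$. Both maps are colorwise homotopy equivalences: the left one by the extra degeneracy, and the right one because $\epsilon\colon WP\to P$ is a levelwise homotopy equivalence, which passes to coproducts and realizations of these proper, Reedy cofibrant, simplicial spaces. Both maps are strict $WP$-algebra maps, and a strict map is in particular a homotopy morphism. To turn the zigzag into a single homotopy morphism $A_{cw}\Rightarrow \widetilde{A}_{cw}$ we invert the left arrow as a homotopy morphism. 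Homotopy algebras over $W$-type operads admit homotopy inverses of morphisms that are colorwise homotopy equivalences; this is the homotopy invariance of \cite{co-po-86}/\cite{we-07}, and the horn-filling from the proof of Theorem~\ref{thmm:cwapp} can also be used. The left map actually has a strict section in $\Top$ given by the extra degeneracy, which helps here. We then compose as above.

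The main obstacle is this last step: inverting and composing homotopy morphisms while keeping track of the restriction conditions on leaf and root components. Cellularity is not the issue there, because $\widetilde{A}_{cw}$ is already strict and CW, and the homotopy morphism is not required to be cellular. I would first try to rely on Boardman–Vogt/Vogt's theorem that homotopy morphisms whose underlying maps are homotopy equivalences are invertible up to homotopy, transferred to the colored nonsymmetric setting via the $\Omega(T)$ reduction of Lemma~\ref{lm:red}.
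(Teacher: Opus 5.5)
Your proposal follows the paper's own route. Theorem~\ref{thmm:cwapp} gives $A\Rightarrow A_{cw}$. The rectification $\widetilde{A}_{cw}=\eps_{!}A_{cw}=|B_{\bullet}(P,WP,A_{cw})|$ is a strict CW algebra because its face and degeneracy maps are cellular (Corollary~\ref{cor:algrect}). The zigzag of Lemma~\ref{lm:rectzag} is then turned into a homotopy isomorphism and composed with the first one; the paper phrases this as filling a $2$-horn, while you phrase it as inverting the left arrow and composing, which amounts to the same thing.

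The only thing to drop is your parenthetical alternative of rectifying $A$ itself. The face maps of $B_{\bullet}(P,WP,A)$ involve the generally non-cellular $WP$-action of $A$, which is exactly why Theorem~\ref{thmm:cwapp} must be applied first.
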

	
	Corollary~\ref{corm:rect} (see~\S\ref{ssec:pf1}) is obtained by the rectification of homotopy algebras that is given by the geometric realization of monadic two-sided bar construction~\cite{be-mo-06'}.
	By restricting to unary operads with finitely many different compositions, we obtain a finite CW approximation for homotopy coherent diagrams of finite CW complexes.
	In general, even if $A(c)$ is a finite CW complex, $\widetilde{A}_{cw}(c)$ is usually an infinite CW complex (due to the bar construction parameters).
	Notice that a homotopy isomorphism $A\Rightarrow \widetilde{A}_{cw}$ is obtained by filling the inner $2$-horn consisting of $A\Rightarrow A_{cw}$ (Theorem~\ref{thmm:cwapp}) and of the rectification $A_{cw}\Rightarrow \widetilde{A}_{cw}$ of the $WP$-algebra $A_{cw}$, see Lemma~\ref{lm:rectzag}. 
	
	Construction of homotopy isomorphisms by strict horn filling is well known for presheaf categories~\cite{co-po-86}.
	Motivated by this, we prove another main result of this paper (see Theorem~\ref{thm:halgdiag}).
	Here, the category $\Omega P$ consists of $P$-decorated (at vertices) planar rooted trees and their morphisms being compositions of face and degeneracy maps respecting the operad $P$ structure (see~\S\ref{ssec:ptrees}), $\underline{\Omega} P$ is a certain resolution of $\Omega P$ along all branches,
	and the strict Segal condition for algebras is defined in terms of leaves, see~\S\ref{ssec:dneralg}.

	\begin{theoremM}\label{thmm:preshemb}
		Let $P$ be any unital $C$-colored operad in sets.
		Then there is the isomorphism of categories
		\[
			N_{P}^{h}\colon	\Alg(WP,\Top)\to \Top^{(\underline{\Omega} P)^{op}}_{strict}.
		\]
	\end{theoremM}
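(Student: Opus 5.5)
The plan is to construct $N_P^h$ as a "homotopy coherent nerve" for algebras, modelled on the known identification of topological operads, i.e.\ $W\mathcal{O}_C$-algebras, with strict Segal presheaves on a resolved dendroidal category.

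\textbf{Definition of the functor.} For each $P$-tree $T$ in $\Omega P$, let $W(T)$ denote the space of internal-edge lengths $[0,1]^{E_{int}(T)}$. A $WP$-algebra structure on $A$ consists, for each $P$-tree $T$ with leaves $c_1,\dots,c_n$ and root $c$, of a map
\[
W(T)\times A(c_1)\times\dots\times A(c_n)\to A(c).
\]
These maps are compatible with three kinds of structure: faces (edge length $0$ gives contraction, via the $P$-composition), length $1$ (which gives grafting, i.e.\ composition of $WP$-operations), and units and degeneracies.

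I would define $N_P^h(A)(T)$ as the space of families of points of $A$ on all edges of $T$ together with the "coherence data" along branches. The branch resolution $\underline{\Omega}P$ is designed exactly so that a representable object encodes one tree together with a chosen decomposition into branches. Concretely, set $N_P^h(A)(T)=\prod_{\text{leaves }\ell}A(c_\ell)$, and let a morphism of $\underline{\Omega}P$ from $T'$ to $T$ that resolves an inner edge act through the structure map of $A$ at the corresponding cube coordinate. The strict Segal condition (in terms of leaves) then holds by construction.

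\textbf{Inverse functor and isomorphism.} For a strict Segal presheaf $X$, define $A(c)=X(\eta_c)$. The Segal condition identifies $X(T)$ with the product over the leaves, and the morphisms of $\underline{\Omega}P$ which encode cube parameters give maps $W(T)\times\prod A(c_i)\to A(c)$. I will use the cubical presentation of $W$: each cube $[0,1]^{E}$ is a colimit of its faces, and the faces of $\underline{\Omega}P$ over $T$ correspond to strata of the cube.

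Next I check that the relations of $WP$ are matched by those of $\underline{\Omega}P$:
\begin{itemize}
\item length $0$ corresponds to inner faces,
\item length $1$ corresponds to branch grafting,
\item units correspond to degeneracies.
\end{itemize}
Once the relations are matched, the two constructions are mutually inverse on objects. On morphisms, both sides consist of maps $A(c)\to B(c)$ commuting with the structure maps, so they agree, and functoriality is immediate.

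\textbf{Main obstacle.} The hardest step is the precise matching between the relations of the $W$-construction and the morphisms of $\underline{\Omega}P$. The difficulty is the interplay of the length-$1$ (grafting) relations with the resolution along all branches: I must show that composing branch resolutions in different orders yields the same cube face. I would first try to prove this by induction on the number of inner edges, using a normal form for morphisms in $\underline{\Omega}P$ (branch resolutions first, then faces, then degeneracies). The unitality of $P$ is needed so that degeneracies in $\Omega P$ exist and correspond to units in $WP$.
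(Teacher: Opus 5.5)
Your outline has the same skeleton as the paper's proof. The nerve is the leaf product, and the inverse is $X\mapsto X(\eta_c)$, with structure maps read off from the root cube $I^{ie(T)}\subseteq\underline{\Omega}P(\eta_{r(T)},\mathcal{T})$. The $W$-relations are matched with inner faces ($t_e=0$), grafting ($t_e=1$) and degeneracies (the $\max$ relation), and morphisms are detected on edges.

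The gap is that you never say what a morphism of $\underline{\Omega}P$ is or how it acts. The step you pass over with ``once the relations are matched, the two constructions are mutually inverse on objects'' is exactly where this is needed. A strict Segal presheaf $X$ has values on every $(f,\mu)\colon\mathcal{S}\to\mathcal{T}$, not only on the root cubes used to build $R(X)$. To get $N^h_PR(X)=X$ you need two facts:
\begin{enumerate}[label=(\alph*)]
\item the $l$-th Segal component of $X(f,\mu)$ is $X\bigl((f,\mu)\circ(\iota_{\mathcal{S},l},\varnothing)\bigr)$;
\item this composite equals $(\iota_{\mathcal{T},f(l)},\mu_l)$, i.e.\ the branch $\mathcal{T}[f(l)]$ with the restricted metric, which is an element of $WP$ (Lemmas~\ref{lm:leafcomp} and~\ref{lm:ssegal}).
\end{enumerate}
The same reduction is what lets you reduce naturality of a transformation to naturality at the root inclusions $(\iota_{\mathcal{T}},\mu)$. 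It rests on the explicit composition law \eqref{eq:comp} and on the disjointness of branches over distinct leaves (Lemma~\ref{lm:branches}).

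Relatedly, your description of the action, a morphism that ``resolves an inner edge'' acting ``at the corresponding cube coordinate'', is not a definition and puts the parameters in the wrong place. Every $(f,\mu)$ acts on the $l$-th factor by $\phi_{\mathcal{T}[f(l)]}(\mu_l,-)$. The parameters are metrics on the branches above the images of the leaves of the source. An inner face carries no parameters, since $E_{\partial_e}(T)=\varnothing$, and it acts as the identity on leaf products.

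The difficulty you single out, that composing branch resolutions in different orders gives the same cube face, is the associativity of $*$ (Lemma~\ref{lm:assoc}). That is part of $\underline{\Omega}P$ being a category, not part of the theorem, and no normal form or induction is needed for it here.

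What the theorem does need is that $N^h_PA$ respects composition, i.e.\ identity \eqref{eq:wpcomp}: the composite metric $\nu*\mu$ on $\mathcal{T}[gf(l)]$ is the $WP$-grafting of $(\mathcal{U}[f(l)],\mu_l)$ with the $(\mathcal{T}[g(m)],\nu_m)$. This is a direct case check on \eqref{eq:comp}:
\begin{enumerate}[label=(\alph*)]
\item grafting edges receive length $1$;
\item edges created by $g$ receive length $0$ and are removed by the contraction relation, which recovers the decoration $\psi_{\mathcal{U}}(w)$ via \eqref{cd_mophism_omegaP};
\item edges with several preimages receive the maximum, which is exactly the unit relation for the identity-decorated vertices deleted by $g$.
\end{enumerate}
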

	
	We call $N_{P}^{h}$ a \textit{homotopy nerve} for topological homotopy algebras over operads in sets (compare with \cite[\S13]{he-mo-22}).
	In the particular case of $P=\mathcal{O}_{C}$, Theorem~\ref{thmm:preshemb} was proved (for symmetric operads) in~\cite[Theorem~B.6]{clrrw-22}.
	In fact, $N_{\mathcal{O}}^{h}$ and the embedding from~\cite[Theorem B.6]{clrrw-22} are related by means of a commutative diagram (see Theorem~\ref{thm:comparison}) involving a flattening functor (see definition~\ref{def:hoflat}).
	
	While Heuts and Moerdijk develop a homotopy theory of algebras by modeling them as left fibrations over the dendroidal nerve~\cite{he-mo-22}, our approach to topological homotopy algebras over operads stays within the realm of strict topological algebras over resolved operads in sets. 
	To our knowledge, the corresponding strict $1$-categorical translation (Theorem~\ref{thmm:preshemb}) has not been explicitly spelled out in the literature.
	For the comparison see Proposition~\ref{pr:algneriso}. 
	
	The presheaf embedding of homotopy (iso)morphisms is obtained by substituting $P\otimes_{BV}\Delta^{1}$ into $P$ in Theorem~\ref{thmm:preshemb}.
	One may hope to obtain Theorem~\ref{thmm:cwapp} by applying classical horn filling~\cite{co-po-86} to strict Segal presheaves over $P\otimes_{BV}\Delta^{1}$.
	However, this idea runs into the problem that there is no cylinder decomposition for the category $\Omega (P\otimes_{BV} \Delta^{1})$ of decorated trees analogous to that of operads.
	
	The structure of the paper is given as follows.
	\begin{itemize}
	\item In~\S\ref{ssec:dset} the formalism of dendroidal categories and fillers is recalled.
    The dendroidal nerve for algebras over operads is defined in terms of the category $\Omega P$ of $P$-trees that is introduced in~\S\ref{ssec:ptrees}.
    In~\S\ref{sec:thm1} the necessary details are given on homotopy morphisms of homotopy algebras in terms of $W$-construction~\S\ref{ssec:homoalg}, their rectification using two-sided bar construction~\S\ref{ssec:rectif}, and dendroidal formalism~\S\ref{ssec:dset}.
	The definition of CW-algebras (in strict and homotopy sense) is given in~\S\ref{ssec:cw}.
	The remaining contents of~\S\ref{sec:thm1} concern the existence of horn fillers~\S\ref{ssec:hornext}, dendroidal cylinder decomposition~\S\ref{ssec:cyldec} resulting in the proof of Theorem~\ref{thmm:cwapp} (see \S\ref{ssec:pf1}).
	\item After a lemma on morphisms of branches (\S\ref{ssec:branch}), the category $\underline{\Omega} P$ is defined in terms of metric functions and their $*$-product in~\S\ref{ssec:resop}.
	Associativity of the $*$-product is proved in Lemma~\ref{lm:assoc}.
	A reduction of metric functions to those corresponding to leaf embeddings is given in Lemma~\ref{lm:leafcomp}.
	In~\S\ref{ssec:dneralg}, the definition of a dendroidal nerve for strict algebras, Segal condition and proof of nerve isomorphism (Proposition~\ref{pr:dalg}) are given. 
	The comparison with the similar (but different) notion of nerve from~\cite{he-mo-22} is in Proposition~\ref{pr:algneriso}.
	In~\S\ref{def:nhp}, the homotopy nerve of homotopy algebras is defined.
	After a couple of technical lemmas, Theorem~\ref{thmm:preshemb} (i.e. the homotopy nerve isomorphism, see Theorem~\ref{thm:halgdiag}) is proved.
	~\S\ref{ssec:comphorns} contains a comparison of horns for homotopy algebras and their homotopy nerves (i.e. of presheaves).
	
	\item We recall the embedding from~\cite{clrrw-22} for strict and homotopy topological operads (defined in terms of the explosion category) in~\S\ref{ssec:strictop} and~\S\ref{ssec:homop}, respectively.
	In~\S\ref{ssec:facaux} we discuss pruned vertices in Lemma~\ref{lem:bookkeeping}, and introduce $\mathcal{O}_{C}$-trees associated with a decomposition of a dendroidal morphism. 
	The poset of paths from~\cite{clrrw-22} is compared with another poset in Lemma~\ref{lm:pi}, see~\S\ref{ssec:comppos}.
	In~\S\ref{ssec:compemb}, the homotopy flattening functor $\Flat^{h}$ is defined, and two embeddings for homotopy operads are described in terms of it.
	(The explicit description of $\Flat^h$ uses the auxiliary trees mentioned above.)
	The description in the case of strict algebras is given in Remark~\ref{rem:strict}.

	\item In the final Section~\ref{sec:app} we briefly outline the applications of our method, and propose further directions of study.
	\end{itemize}

	\noindent {\bf Acknowledgements:} 
	The authors thank N.~Wahl and M.~Robertson for providing additional details on the proof of~\cite[Theorem B.6]{clrrw-22}. 
	We gratefully acknowledge funding of the Deutsche Forschungsgemeinschaft
	(DFG, German Research Foundation): Project number 561158824 (Walter Benjamin Fellowship of G.S.).
    G.S. is grateful to K.~Kuribayashi for the helpful discussions of this project and hospitality during G.S.'s research visit to Shinshu University.
    Finally, we thank M.~Frankland and P.~Salvatore for the attention to this work.
	
	\section{Proof of Theorem~\ref{thmm:cwapp}}\label{sec:thm1}

	\subsection{Dendroidal sets and $\infty$-operads}\label{ssec:dset}

    Let $C$ be a set of colors.	
	In the sequel we only consider operads $P$ that are \textit{unital} (i.e. $P(c;c)$ contains a unit of all infinitesimal compositions), which is required for the definition of degeneracies in various constructions below.
	In this subsection we follow the book of Heuts and Moerdijk \cite{he-mo-22}.
	Consider the dendroidal category $\Omega_{C}$ of planar rooted $C$-colored trees as objects.
	A tree is a pair $(V_T, H_T)$ of its vertices $V_T$ and half-edges $H_T$ (open edges called \emph{leaves} $L(T)$ of $T$, and orbits of orientation involution that are called \emph{inner edges} $ie(T)$ of $T$).
	By definition, an \textit{outer (or external) vertex} $v$ of a tree $T$ has exactly one inner incident edge; otherwise, $v$ is called an \textit{inner vertex} of the tree $T$.
	
	Furthermore, any top (bottom, respectively) outer vertex $v\in V_{T}$ is called a \textit{leaf vertex} (\textit{root vertex}, respectively).
	A tree without vertices is denoted by $\eta=\eta_{c}$, where $c\in C$ is the color of its unique edge.
	Any tree from $\Omega_{C}$ is equipped with a coloring of its edges by elements of $C$, i.e. with a map $c_{T}\colon H_T\to C$ compatible with involutions of inner edges in $T$.
	The morphisms in this category are compositions of the following maps:
	\begin{enumerate}[label=(\roman*)]
		\item an \emph{inner face} map $\partial_{e}\colon \partial_{e}T\to T$ for any inner edge $e$ of $T$ (edge $e\in ie(T)$ expansion);
		\item an \emph{outer face} map $\partial_{v}\colon \partial_{v}T\to T$ for any external vertex $v$ of a tree $T$ with at least two vertices, where $\partial_{v}T$ is obtained from $T$ by pruning away $v$ and all outer edges attached to it (embedding as a subgraph).
		If $T=C_{v}$ is a corolla then the respective outer faces $\eta\to C_{v}$ are given by the embedding of a leaf or the root open edge;
		\item a \emph{degeneracy} $\sigma_{e}\colon \sigma_{e}T \to T$ for any edge $e$ of $T$ (removing a new vertex of valence $2$ corresponding to a unary operation of the same color), where $\sigma_{e}T$ is obtained from $T$ by adding a new vertex in the middle of the edge $e$.
	\end{enumerate}
	
	For any such tree $T$ there is the free $C$-colored operad $\Omega(T)\in \Oper_{C}\Set$ generated by vertices of $T$, and the compositions are given by grafting.
	A \emph{dendroidal object} in a bicomplete monoidal category $\Cc$ is by definition any object from $\Cc^{\Omega^{op}_{C}}$.
	(For example, dendroidal sets form the presheaf category $\dset:=\Set^{\Omega_{C}^{op}}$~\cite{we-07,he-mo-22}.)
	Denote by $\Omega[T]\in \dset$ the representable presheaf in sets, i.e.
	\[
	\Omega[T]:=\Hm_{\Omega_{C}}(-,T).
	\]
	There is the realization-nerve adjoint pair of functors~\cite[\S 3.5.1]{he-mo-22}.
	\begin{equation}\label{eq:nrpair}
		\tau\colon \dset\leftrightarrows \Oper_{C}\Set\rcolon N,
	\end{equation}
	\[
	NP(T):=\Hm(\Omega(T),P),\ \tau(X):=\underset{\Omega[T]\to X}{\col} \Omega(T).
	\]
	Notice that 
	\[
	\tau \Omega[T]=\Omega(T),\ 
	N \Omega(T)=\Omega[T],
	\]
	hold for any tree $T$.
	The functor $N$ is called the \textit{dendroidal nerve} of an operad.
	Let
	\[
	\partial \Omega[T]:=\underset{S\to T}{\col}\ \Omega[S],
	\]
	where the colimit is taken with respect to all faces $\partial_{x} T$ of $T$, if $T$ has at least two vertices.
	In particular, $x$ is either any internal edge $e$ or any external vertex $v$ of $T$. The case of a corolla $T=C_{v}$ corresponds to edges, and the case of an edge $T=\eta$ corresponds to the empty dendroidal set. 
	
	We briefly recall the definition of classical infinity-operads and their horn fillers that is necessary for~\S\ref{ssec:cyldec}.
	For any tree $T$, the presheaf
	\begin{equation}\label{eq:colhornd}
		\Lambda_{e}[T]:=\col_{e\neq e'} \partial_{e'} T\in \dset,
	\end{equation}
	is called the \textit{inner horn} for an internal edge $e$ of $T$~\cite[\S 6]{he-mo-22}.
	If the root vertex $r$ of $T$ has exactly one inner edge attached to it, then omitting the root vertex $r$ one obtains a \textit{root horn} $\Lambda_{r} [T]$ of $T$.
	\textit{Leaf horn} of $T$ is defined for every leaf vertex $v\in V_T$ in a similar way.
	Root and leaf horns are called \textit{external horns}.
	A horn of a dendroidal set $X$ is a morphism $\Lambda_{x}[T]\to X$.
	For a horn $\Lambda_{x}[T]$ and a dendroidal set $X$, its \textit{filler} is a map fitting into the commutative diagram similar to that from definition~\ref{defn:alghorn}.
	A dendroidal set admitting all internal horn fillers is called an \textit{$\infty$-operad}~\cite{we-07}.

	\subsection{Category $\Omega P$}\label{ssec:ptrees}
	Let $P$ be a unital $C$-colored operad in sets.
	Let $\Omega P$ be the category of elements for $P$; specifically, its
	objects are operad morphisms $\psi_{T}\colon \Omega(T)\to P$ for all planar rooted trees $T$.
	Equivalently, an object of $\Omega P$ is any planar rooted tree with $C$-colored edges and $P$-decorated vertices, which we call a \emph{$P$-tree} and denote by $\mathcal{T}=(T,\psi_{T})$.
	A morphism $\mathcal{S}=(S,\psi_{S})\to\mathcal{T}$ in $\Omega P$ is a morphism $f\colon S\to T$ in $\Omega_{C}$ such that the following diagram commutes
	\begin{equation}
		\begin{tikzcd}
			\Omega(S) \arrow{r}{f_{*}}\arrow[swap]{rd}{\psi_{S}} & \Omega(T)\arrow{d}{\psi_{T}}\\
			& P.
		\end{tikzcd}
		\label{cd_mophism_omegaP}
	\end{equation}	
		
	There is the forgetful functor $U\colon \Omega P\to \Omega_{C}$ given by forgetting all vertex decorations of a $P$-tree.  
	Any morphism in $\Omega P$ is uniquely determined by the target $P$-tree $\mathcal{T}$ and an arrow in $\Omega_{C}$.
	In particular, any morphism in $\Omega P$ is a composition of those corresponding to 
	inner, outer faces and degeneracy maps.
	These generators correspond to the operadic composition, projection forgetting the arguments, and substitution of a unit in respective vertex labels.
	Figure \ref{fig:inner_face_map} illustrates a morphism in $\Omega P$ in the case where $f$ is an inner face map.

	\begin{figure}
	\begin{center}
	\begin{tikzpicture}[scale=0.85, every node/.style={font=\small}]
				
				
				\coordinate (vL) at (0,0);
				\coordinate (rL) at (0,-1.0);
				
				\coordinate (l1L) at (-1.5,1.5);
				\coordinate (l2L) at (-0.8,1.5);
				\coordinate (l3L) at (0,1.5);
				\coordinate (l4L) at (0.8,1.5);
				\coordinate (l5L) at (1.5,1.5);
				
				\draw[thick] (vL) -- (rL);
				\draw[thick] (vL) -- (l1L);
				\draw[thick] (vL) -- (l2L);
				\draw[thick] (vL) -- (l3L);
				\draw[thick] (vL) -- (l4L);
				\draw[thick] (vL) -- (l5L);
				
				\fill (vL) circle (4pt);
				
				\node[below right] at (vL) {$p_2 \circ_e p_1$};

				
				\draw[->, thick] (2.4,0.45) -- (5.0,0.45)
				node[midway, above] {$\partial_e$};

				
				\coordinate (T) at (7.2,0);
				
				\coordinate (v2) at ($(0,0) + (T)$);
				\coordinate (rR) at ($(0,-1.0) + (T)$);
				
				\coordinate (v1) at ($(0.65,1.35) + (T)$);
				
				\coordinate (a1) at ($(-1.4,1.45) + (T)$);
				\coordinate (a2) at ($(-0.7,1.45) + (T)$);
				\coordinate (a3) at ($(1.85,1.45) + (T)$);
				
				\coordinate (b1) at ($(0.0,2.5) + (T)$);
				\coordinate (b3) at ($(1.3,2.5) + (T)$);
				
				\draw[thick] (v2) -- (rR);
				\draw[thick] (v2) -- (a1);
				\draw[thick] (v2) -- (a2);
				\draw[thick] (v2) -- (a3);
				\draw[thick] (v2) -- (v1);
				
				\draw[thick] (v1) -- (b1);
				\draw[thick] (v1) -- (b3);
				
				\fill (v2) circle (4pt);
				\fill (v1) circle (4pt);
				
				\node[below right] at (v1) {$p_1$};
				\node[below right] at (v2) {$p_2$};
				
				\node[left] at ($(0.4,0.80) + (T)$) {$e$};
				
			\end{tikzpicture}
			\caption{The inner face map $\partial_e$ defines a morphism in $\Omega P$ if the vertex decorations are compatible with the operad structure of $P.$}
			\label{fig:inner_face_map}
		\end{center}
	\end{figure}

	\subsection{Homotopy morphisms of homotopy algebras over operads}\label{ssec:homoalg}
	
	Recall that the $W$-construction $WP$ equips the inner edges of trees with lengths (whose tuple corresponds to a point in a cube),
	and the equivalence relations for zero length edges.
	The metric tree $\T$ consists of a $P$-decoration at its vertices and of lengths of its inner edges (in $[0,1]$).
	In particular, after forgetting the $P$-decoration of $\T$, the resulting tree is homeomorphic to the cube $I^{m}$, where $m$ is the number of inner edges in $\T$.
	The structure maps of $WP$ satisfy the following conditions~\cite{be-mo-06'}: 1) a metric tree $\T$ with length $0$ edge $e$ is identified with $\T/e$ and composed boundary vertices using the operad composition; 2) for any unary operation vertex, one removes the vertex and takes the maximum of the lengths of the adjacent edges.
	
	\begin{definition}
		An \emph{algebra} $A$ in $\Top$ over a $C$-colored operad $P$ in sets is a $C$-colored object $\lb A(c)\rb_{c\in C}\in \Top^{C}$ in topological spaces with structure (continuous) maps
		\[
		P(c_{1},\dots,c_{n};c)\times A(c_{1})\times\cdots\times A(c_{n})\to A(c),\ 
		c_{i},c\in C,
		\]
		satisfying infinitesimal associativity conditions
		\[
		(P\circ P)\circ A=P\circ (P\circ A)
		\]
		with respect to the compositional product $\circ$.
	\end{definition}
	
	Given any $C$-colored object $X\in \Cc^{C}$ in a complete closed monoidal category $\Cc$, there is the endomorphism operad $\End X\in \Oper_{C} \Cc$ given by
	\[
	(\End X)(c_{1},\dots,c_{n};c):=
	\Map_\Cc(\prod_{i} X(c_{i}),X(c)),
	\]
	whose infinitesimal compositions are defined by functional substitutions (i.e. evaluation and substitution in $\Cc$), where $\Map_\Cc(-,-)$ is the internal hom for $\Cc$.
	For any operad $P$ in $\Cc$ denote by $\Alg_{X}(P,\Cc)$ the set of all $P$-algebras on the $C$-colored object $X$.
	By tensor-hom adjunction one has
	\begin{equation}\label{eq:algoper}
		\Alg_{X}(P,\Cc)\cong \Oper_{\Cc}(P,\End X).
	\end{equation}
	The category of operads (in a monoidal category) is monoidal with respect to the Boardman-Vogt tensor product $\otimes_{BV}$~\cite{be-mo-06'}.
	As it is well known, a morphism $f\colon A\to B$ between $P$-algebras can be considered as an algebra over $P\otimes_{BV}\Delta^{1}$.
	Here $\Delta^{1}$ has a unique (unary) uncolored operation $0\to 1$, $0\to_{c} 1:=1_{c}\otimes_{BV} (0\to 1)$ and $1_{c}$ is the operadic $c$-colored unit for $P$.
	Denote the operads with grafted $\Delta^{1}$-operations at leaves and root by $P\otimes_{BV} 0$ and $P\otimes_{BV} 1$, respectively.
	Then $f|_{P\otimes_{BV} 0}=A$, $f|_{P\otimes_{BV} 1}=B$.
	
	\begin{definition}[{\cite[Ch.~4]{we-07}, \cite[\S2.9]{ma-sh-st-00}}]
		For any $C$-colored operad $P$ in sets, an algebra over $WP$ in $\Top$ is called a \textit{homotopy algebra} over $P$.
		For any homotopy algebras $A,B$ over $P$ a \textit{homotopy morphism} $f\colon A\Rightarrow B$ is a homotopy algebra $f\in \Alg(W(P\otimes_{BV}\Delta^{1}),\Top)$ such that 
		$f|_{W(P\otimes_{BV} 0)}=A$, $f|_{W(P\otimes_{BV} 1)}=B$ hold.
		A homotopy morphism is called a \textit{homotopy isomorphism}, if $f(0\to_{c} 1)$ is a homotopy equivalence for every color $c\in C$.
	\end{definition}
	
	The category $\Alg(WP,\Top)$ of strict algebras over the operad $WP$ is canonically enriched over $\Top$.
	The structure maps of a $WP$-algebra $A$ are given by 
	\begin{equation}\label{eq:algstr}
		\phi_{\mathcal{T}}\colon I^{m}\times \prod_{l\in L(T)}A(l)\to A(r),
	\end{equation}
	where $\mathcal{T}=(T,\psi)$ is any $P$-decorated (at its vertices) planar tree with $m$ inner edges, $L(T)$ denotes the set of leaves in $T$, and $r=r(T)$ is the root of $T$.
	By taking the adjoint map to~\eqref{eq:algstr}, the higher homotopy of $A$ corresponding to the $P$-tree $\mathcal{T}$ is equivalently given by the map
	\begin{equation}\label{eq:thetamap}
		\theta_{\mathcal{T}}\colon I^{m}\to \End UA,
	\end{equation}
	where $UA$ denotes the $C$-colored object of the algebra $A$.
	
	We explain the labeling of the $2m$ facets for the cube $I^{m}$.
	Every inner edge $e\in ie(T)$ contributes exactly by two facet labels.
	The first label is $\theta_{\mathcal{T}/e}$.
	Let $\mathcal{B}$ ($\mathcal{U}$, respectively) be the tree obtained from $T$ by pruning all vertices $v$ above $i(e)$ (not above $i(e)$, respectively), where $i(e)$ is the vertex of the edge $e$ that is closer to leaves of the tree $T$, and the tree is oriented from root to leaves.
	The second label is $\theta_{\mathcal{B}}\circ \theta_{\mathcal{U}}$.
	(For more details on $P$-trees see~\S\ref{ssec:ptrees}.)
	
	\subsection{Rectification of algebras over operads}\label{ssec:rectif}
	
	There is the free-forgetful adjunction
	\begin{equation}\label{eq:ffadjalg}
		F\colon \Cc^{C}\leftrightarrows \Alg(P,\Cc) \rcolon U.
	\end{equation}
	The category $\Alg(P,\Top)$ of topological algebras has the closed model category transferred from the projective model category on $C$-colored objects via \eqref{eq:ffadjalg}.
	If such a transfer is well defined then the category $\Alg(P,\Top)$ is called \emph{admissible}.
	Admissibility for algebras in a sufficiently nice monoidal category (e.g. in $\Top$) over nonsymmetric operads in sets was proved by Muro~\cite{mu-11}.
	Since the Quillen closed model category on $\Top$ is left proper, by~\cite[Theorem~1.5]{mu-11} there is a Quillen equivalence
	\[
	\eps_{!}\colon \Alg(W P,\Top)\leftrightarrows
	\Alg(P,\Top)\rcolon \eps^{*},
	\]
	where the functor $\eps^{*}$ is the precomposition with the augmentation map $\eps\colon WP\to P$.
	Let 
	\[
	\eps_{!}(A):=|B_{\bullet}(P,WP,A)|
	\]
	be the topological $P$-algebra with the induced left $P$-action on the two-sided bar construction.
	(We refer to~\cite{ri-14},~\cite{be-mo-06'} for the standard properties of the two-sided, or double-sided, bar construction.)
	The functor $\eps_{!}$ is called \textit{a rectification} of a homotopy $P$-algebra~\cite{be-mo-06}.
	The functor $\eps_{!}$ represents the derived left adjoint of $\eps^{*}$.
	In particular, there is a homotopy isomorphism $A\Rightarrow \eps_{!}A$ (the unit adjunction) where by a slight abuse of the notation we omit $\eps^{*}$ for any $W_{\bullet}P$-algebra $A$.
	This corresponds to a natural zigzag of weak equivalences given below.
	
	Notice that the geometric realization preserves weak equivalences if the bar construction is a Reedy cofibrant simplicial object (e.g. see~\cite[Ch.14]{ri-14}). 
	Furthermore, it preserves homotopy equivalences for CW spaces objectwise by the classical Whitehead theorem (the homotopy inverse maps do not form a strict algebra morphism, in general, but they do form a homotopy isomorphism~\cite{be-mo-06}).
	
	\begin{lemma}\label{lm:rectzag}
		There is a natural zigzag of weak equivalences for $W P$-algebras
		\[
		\begin{tikzcd}
			\mid B_{\bullet}(P,WP,A)\mid &
			\mid B_{\bullet}(WP,WP,A)\mid \arrow[swap]{l}{\mid\eps_{*}\mid}\arrow{r}{\sim} &
			A.
		\end{tikzcd}
		\]
	\end{lemma}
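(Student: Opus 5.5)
The zigzag comes from the two-sided bar construction, using the standard properties of \cite{ri-14,be-mo-06'}. Consider the simplicial objects $B_{\bullet}(Q,WP,A)$ with $n$-simplices $Q\circ (WP)^{\circ n}\circ A$, where $Q$ is either $P$ or $WP$ viewed as a right $WP$-module (the former via $\eps\colon WP\to P$). The face maps are given by the right action of $WP$ on $Q$, the compositions in $WP$, and the $WP$-action on $A$. The degeneracies come from the units. Since $Q$ carries a left $Q$-action compatible with all these maps, each $B_{\bullet}(Q,WP,A)$ is a simplicial $Q$-algebra. Geometric realization commutes with the free algebra functor and with finite products in $\Top$, so $|B_{\bullet}(Q,WP,A)|$ is again a $Q$-algebra, and by restriction along $\eps$ we can regard everything as $WP$-algebras.

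The map $|\eps_{*}|$ is the realization of the levelwise map $\eps\circ \Id\colon WP\circ (WP)^{\circ n}\circ A\to P\circ(WP)^{\circ n}\circ A$. This map commutes with faces and degeneracies because $\eps$ is an operad map, so it is a map of simplicial $WP$-algebras. The right-hand map is the realization of the augmentation $B_{\bullet}(WP,WP,A)\to A$, induced by the action $WP\circ A\to A$, which is a $WP$-algebra map. Both constructions are evidently natural in $A$.

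Next, $B_{\bullet}(WP,WP,A)\to A$ is a weak equivalence. The augmented simplicial object has an extra degeneracy $s_{-1}$ given by inserting units: $(WP)^{\circ(n+1)}\circ A\to (WP)^{\circ(n+2)}\circ A$ at the left. This extra degeneracy is a map of underlying $C$-colored spaces, though not of algebras, so the realization of the underlying spaces deformation retracts onto $A$ \cite[Ch.~4]{ri-14}. Since $U$ commutes with realization, the map is a homotopy equivalence, hence a weak equivalence.

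For $|\eps_{*}|$, recall that $\eps\colon WP(c_1,\dots,c_n;c)\to P(c_1,\dots,c_n;c)$ is a homotopy equivalence in each arity, since each $WP(\underline{c};c)$ is a disjoint union over $P(\underline{c};c)$ of contractible cubical complexes. In the nonsymmetric setting, $Q\circ X$ is built as a coproduct of products $Q(\underline c;c)\times X(c_1)\times\dots\times X(c_n)$ with no quotient by symmetric groups. Hence $\eps\circ\Id$ is a levelwise homotopy equivalence, being a coproduct of products of homotopy equivalences with identities.

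The main obstacle is passing from this levelwise statement to the realization. I would argue that both simplicial spaces are Reedy cofibrant, in fact proper: the degeneracies insert units, i.e. they are inclusions of the summand corresponding to trees with a unary unit vertex. These are closed cofibrations, since the unit $\eta\to WP(c;c)$ includes a point into a cube-like space, which is an NDR pair, and $P$ is discrete. Under this Reedy cofibrancy hypothesis, the remark preceding the lemma says that geometric realization preserves levelwise weak equivalences \cite[Ch.~14]{ri-14}, so $|\eps_{*}|$ is a weak equivalence. If $A$ is not well pointed, one could first replace $A$ cofibrantly; but the $A$-factor only enters through products with identity maps, so the cofibrancy of the degeneracies comes from the operad factors alone, and no assumption on $A$ should be needed.
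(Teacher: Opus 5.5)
Your proposal is correct and follows the same route as the paper. The right-hand map is handled by the extra degeneracy coming from the unit of the monad $WP\circ -$, and $|\eps_{*}|$ by a levelwise equivalence $\eps\circ\Id$ together with Reedy cofibrancy (properness) of both bar constructions. You also make explicit two points the paper only calls standard: that $\eps$ is an aritywise homotopy equivalence with contractible fibres, and that the degeneracies are closed cofibrations built from $\{1_c\}\hookrightarrow WP(c;c)$ times identity maps. This also shows that no CW hypothesis on the levels is needed.
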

	\begin{proof}
		Let $M := WP \circ -$ be the monad on $\Top^C$ associated with the topological operad $WP$. 
		A homotopy $P$-algebra is equivalently an algebra over the monad $M$. The map $|B_\bullet(WP, WP, A)| \to A$ is the geometric realization of the standard monadic bar resolution. 
		Because $A$ is a $WP$-algebra, the augmented simplicial space $B_\bullet(WP, WP, A) \to A$ possesses an extra degeneracy (a simplicial contracting homotopy) induced by the monadic unit $A \to WP(A)$. 
		Therefore, its geometric realization is homotopy equivalent (and thus weakly equivalent) to $A$.
		
		The augmentation of the Boardman-Vogt resolution yields a map of topological operads $\varepsilon \colon WP \xrightarrow{\sim} P$, which is a weak equivalence. 
		This induces a map of simplicial spaces:
		\[
		\varepsilon_* \colon B_\bullet(WP, WP, A) \to B_\bullet(P, WP, A).
		\]
		In simplicial degree $n$, this map evaluates the respective operads on a specific collection:
		\[
		\varepsilon_n \colon WP \circ (WP)^{\circ n} \circ A \to 	P \circ (WP)^{\circ n} \circ A.
		\]
		For any collection of CW spaces $X$, the evaluation functor $- \circ X$ preserves weak equivalences of operads, as it is constructed via coproducts of spaces multiplied by Cartesian powers of $X$. 
		Consequently, $\varepsilon_n$ is a weak equivalence of spaces for every $n$, meaning $\varepsilon_*$ is a levelwise weak equivalence.
		
		Finally, because both $B_\bullet(WP, WP, A)$ and $B_\bullet(P, WP, A)$ are Reedy cofibrant simplicial spaces (a standard property of the monadic bar construction for CW operads and algebras), and geometric realization preserves levelwise weak equivalences between Reedy cofibrant simplicial spaces, the induced map $|\eps_*|$ is a weak equivalence. 
		This completes the proof.
	\end{proof}
	
	\subsection{CW algebras}\label{ssec:cw}
	
	We work in the category of CGWH spaces where the Cartesian product of CW complexes is a CW complex (with compactly generated topology).
	
	\begin{definition}\label{def:cwhalg}
		We say that a topological $P$-algebra $A$ is a \textit{CW $P$-algebra} if $A(c)$ is a CW-complex, $c\in C$, and every structure map $\phi_{\mathcal{T}}$ (see~\eqref{eq:algstr}) for $A$ is cellular with respect to the CW-structure on the respective Cartesian product.
		We call a homotopy algebra $A\in\Alg(WP,\Top)$ a \textit{CW homotopy algebra} if $A$ is a CW-algebra over $WP$.
	\end{definition}
	
	\begin{remark}
		Notice that for any operad $P$ in sets, the Boardman-Vogt resolution $WP$ is a CW operad in topological spaces (defined as a CW algebra over operad of operads, see~\S\ref{sec:compoper}), and a CW algebra over the operad of operads, because its spaces of operations $WP(c_{1},\dots,c_{n};c)$ are disjoint unions of cubes (over all trees with fixed colors of the leaves and the root), and the composition maps are defined by facet inclusions, which are cellular maps in the standard CW structure of the cube.
	\end{remark}
	
	\begin{corollary}\label{cor:algrect}
		For any CW homotopy $P$-algebra $A$, $\eps_{!}(A)$ is a CW $P$-algebra.
	\end{corollary}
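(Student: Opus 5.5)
The plan is to give $\eps_{!}(A)=|B_{\bullet}(P,WP,A)|$ an explicit CW structure and then check that the $P$-action on it is cellular.

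\emph{Levelwise CW structures.} In simplicial degree $n$ we have
\[
B_{n}(P,WP,A)=P\circ (WP)^{\circ n}\circ A .
\]
Since $P$ is an operad in sets and each $WP(c_1,\dots,c_k;c)$ is a disjoint union of cubes, the $C$-colored space $B_n$ is, colorwise, a disjoint union of spaces of the form
\[
I^{m_1}\times\cdots\times I^{m_s}\times A(c_1)\times\cdots\times A(c_r),
\]
indexed by discrete decoration data. We give $B_n$ the product CW structure; in CGWH spaces this is again a CW complex.

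\emph{Cellularity of the simplicial operators.} We then check that all face and degeneracy maps of $B_\bullet$ are cellular.
\begin{itemize}
\item The outer face $d_0$ uses the composition $P\circ WP\to P$, i.e.\ the augmentation $\eps$ followed by composition in $P$. It collapses cubes to points (vertices) and is the identity on the $A$-factors, so it is cellular.
\item The inner faces use the composition in $WP$, which consists of facet inclusions of cubes, and the identity elsewhere. These are cellular by the Remark above.
\item The last face $d_n$ uses the action $WP\circ A\to A$. On each summand this is a structure map $\phi_{\mathcal T}$, which is cellular because $A$ is a CW homotopy algebra (Definition~\ref{def:cwhalg}).
\item Degeneracies insert units $1_c\in WP(c;c)$, which are $0$-cells.
\end{itemize}
Hence $B_\bullet$ is a simplicial CW complex with cellular structure maps.

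\emph{Cells of the realization and the $P$-action.} The realization $|B_\bullet|$ is the quotient of $\coprod_n \Delta^n\times B_n$. Since every face and degeneracy is cellular, the standard argument shows that this is a CW complex. Its cells are the products (open nondegenerate simplex) $\times$ (cell of $B_n$ not in the image of a degeneracy). Here we use that degeneracies are inclusions of subcomplexes (they insert a unit vertex), so the latching objects are subcomplexes and the skeletal filtration is built by cellular pushouts along $\partial\Delta^n\times B_n\cup \Delta^n\times L_nB\hookrightarrow \Delta^n\times B_n$. Next, the $P$-action $P\circ|B_\bullet|\to|B_\bullet|$ is the realization of the levelwise maps $P\circ P\circ(WP)^{\circ n}\circ A\to P\circ (WP)^{\circ n}\circ A$. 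These are given by composition in the set-operad $P$, which permutes and identifies summands, and are the identity on the space factors, so they are cellular. Because $P$ is a set-operad, the product $P(c_1,\dots,c_k;c)\times\prod_i|B_\bullet|(c_i)$ is a disjoint union of products of realizations. The realization commutes with finite products in CGWH spaces, and under the homeomorphism $\prod_i |X_i|\cong|\prod_i X_i|$ with the diagonal, cellularity is preserved once one uses the triangulation of $\Delta^{p}\times\Delta^q$ as the CW structure. The structure maps $\phi_{\mathcal T}$ for $\eps_!(A)$ are iterates of these and hence are cellular.

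\emph{Main obstacle.} I expect the main obstacle to be this last compatibility. The product CW structure on $\prod_i|B(c_i)|$ and the CW structure induced on $|\prod_i B(c_i)|$ do not literally coincide, since they differ by subdividing products of simplices. I would first try to define the CW structure on $\eps_!(A)(c)$ as the one pulled back via this comparison. Failing that, I would argue that the comparison homeomorphism carries each product cell into a finite union of cells of the same dimension, so cellularity in the dimension-skeleton sense (images of $k$-skeleta lie in $k$-skeleta) is preserved.
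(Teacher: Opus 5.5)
Your proposal is correct and follows the same route as the paper: levelwise CW structures on $B_n(P,WP,A)$, cellular faces and degeneracies, hence a CW structure on the realization. It also makes explicit two points that the paper's one-line coequalizer argument leaves implicit: the latching objects are subcomplexes because degeneracies insert unit $0$-cells, and the induced $P$-action is cellular for the product CW structure.

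For the second point, your second option is the right one. The comparison $\prod_i|B_\bullet(c_i)|\to|\prod_i B_\bullet(c_i)|$ sends $k$-skeleta into $k$-skeleta, since each shuffle simplex of $\Delta^p\times\Delta^q$ is a $(p+q)$-simplex paired with cells of the same total dimension. Your first option would not work, because it replaces the product CW structure that Definition~\ref{def:cwhalg} requires rather than defining a CW structure on $\eps_{!}(A)(c)$.
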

	\begin{proof}
		Write the geometric realization of a simplicial space $X_{\bullet}:=B_{\bullet}(P,WP,A)$ as a coequalizer
		\[ 
		|X_{\bullet}|=
		\col \left( \coprod_{f \colon [m] \to [n]} X_{n} \times 	\Delta^m \rightrightarrows \coprod_{k \ge 0} X_{k} \times \Delta^k \right).
		\]
		Then the claim follows, since both arrows are cellular, because all face (i.e. algebra structure maps) and degeneracy (i.e. identity maps, corresponding to units of the operad $P$ which is unital by the assumption) maps are cellular by the assumption.
	\end{proof}
	
	\begin{remark}
		Notice that one cannot formulate definition~\ref{def:cwhalg} using the adjoint structure maps $\theta_{\mathcal{T}}$ (see \eqref{eq:thetamap}), since the mapping space between CW spaces is not a CW space, in general.
	\end{remark}
	
	\subsection{Cylinder decomposition for dendroidal sets}\label{ssec:cyldec}
	
	We follow~\cite{ci-mo-11}.
	Consider the $C$-colored cylinder
	\[
	\Omega[T]\otimes\Delta^{1}:=
	N(\Omega(T)\otimes_{BV}\Delta^{1})
	\]
	for any $C$-colored tree $T$, where by abuse of the notation we denote the representable presheaf $N\Delta^{1}$ for the linear tree with one vertex by $\Delta^{1}$.
	It is convenient to additionally mark the vertices of $T$ by white color and the vertex of $\Delta^{1}$ (unary operation) by black color.
	There is the collection of all trees obtained by inserting unary operations from $\Delta^{1}$ to $T$ such that any path from any leaf to the root has exactly one such operation (the resulting trees are called \textit{percolation diagrams} in~\cite{ci-mo-11}).
	This collection has the partial order, where the least element $T_{1}$ has black unary operations grafted at leaf vertices, and the greatest element $T_{N}=\sigma_{er} T$ has the black root vertex $r$ (of valence $2$), where $er$ is the root edge of $T$.
	Consider any linear extension of this partial order, so that the above collection is a sequence $T_{1},\dots,T_{N}$.
	Let 
	\[
	A_{0}:=
	\partial\Omega[T]\otimes\Delta^{1}\cup \Omega[T]\otimes 0,
	\]
	\[
	A_{i}:=A_{0} \cup\ \bigcup_{j\leq i} \Omega[T_{i}],\ 
	i=1,\dots,N.
	\]
	One has 
	\[
	\Omega[T]\otimes\Delta^{1}=
	A_{N}=
	A_{0} \cup \bigcup\limits_{j=1}^{N} \Omega[T_{i}].
	\]
	
	\begin{lemma}[{\cite[Lemma~5.6]{ci-mo-11}}]\label{lm:inter}
		For any $i\neq j$ one has 
		\[
		\Omega[T_{i}]\cap \Omega[T_{j}]\subseteq \bigcup\limits_{k\leq i,j} \Omega[T_{k}].
		\]
	\end{lemma}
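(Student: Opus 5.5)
The plan is to identify the dendroidal subsets $\Omega[T_i]\subseteq N(\Omega(T)\otimes_{BV}\Delta^{1})$ with their sets of dendrices, namely tree morphisms into the percolation diagrams. The set $\Omega[T_{i}]\cap\Omega[T_{j}]$ then consists of those $S\to \Omega(T)\otimes_{BV}\Delta^1$ that factor through both $T_i$ and $T_j$. Since every $T_i$ is a tree without degenerate part in this nerve, a dendrex of $\Omega[T_i]$ factors (up to degeneracies, which we first split off via the Eilenberg--Zilber type decomposition in $\Omega_C$) as a face $S\hookrightarrow T_i$. So it suffices to treat a face $S$ of both $T_i$ and $T_j$. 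Such an $S$ is determined by a set of colored/labelled vertices of $\Omega(T)\otimes_{BV}\Delta^1$, that is, by operations in the free BV tensor.

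The key combinatorial step is to describe $S$ concretely. The vertices of $S$ are composites of white vertices of $T$ and black unary vertices. Each edge of $S$ lies over an edge of $T$ and carries a label $0$ or $1$, recording whether it sits below or above the black level. I will record for $S$ the set $E_0(S)$ of edges of $T$ that appear (as edges or inside composites) above the black cut, and $E_1(S)$ of those below it. For a percolation diagram $T_k$, the order is inclusion of the set of white vertices lying below the black level. The statement that $S$ is a face of $T_k$ says that $T_k$'s cut is compatible with $S$'s labels. Concretely, the white vertices forced below the cut by $S$ lie below $T_k$'s cut, and those forced above lie above it.

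Now I define $T_m$ as the percolation diagram whose black level sits as low as possible subject to $S$. Its set of white vertices below the cut is the minimal down-closed set containing the vertices forced below by $S$. This is $\min$-like, so in the poset one gets $T_m\le T_i$ and $T_m\le T_j$, because both contain that minimal down-closed set. I then check that $S$ is a face of $T_m$. Inner faces of $S$ correspond to composites, and these remain valid since black unary operations commute past white ones in the BV tensor only in the prescribed interchange form; outer faces are prunings and are harmless. Hence $S\in\Omega[T_m]$ with $m\le i,j$ in the partial order. Since the linear extension refines the partial order, we get $m\le i,j$ in the sequence, which gives the inclusion.

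The main obstacle is the face check. It has to handle vertices of $S$ that are composites straddling the black level, i.e.\ a white operation composed with a black one via the BV interchange relation. We must show that such a composite is realizable inside $T_m$ with its lowered cut. I would first prove that any such straddling composite pins down the cut locally: it forces specific vertices above and below the cut. The minimal cut defined above then automatically respects these constraints, and verifying this is the step I expect to need careful case analysis (the root-vertex and leaf-vertex cases separately).
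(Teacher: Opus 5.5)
\textbf{There is a genuine gap.} The paper only cites Cisinski--Moerdijk for this lemma, so I am judging your argument on its own.

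\textbf{What works.} Your strategy is sound: read off from a nondegenerate dendrex the constraints it imposes on the cut, then take the extremal percolation diagram obeying them. In your orientation (``below'' means toward the leaves, where label $0$ sits), your $T_m$ does satisfy $T_m\leq T_i$ and $T_m\leq T_j$ in the percolation order.

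\textbf{The gap.} Everything rests on the claim that $S\in\Omega[T_k]$ holds exactly when the cut of $T_k$ puts the ``forced'' white vertices on the prescribed sides. You assert this and defer it, and the route you sketch for it would not work. By the interchange relation $\beta_{e}\circ(v\otimes 0)=(v\otimes 1)\circ(\beta_{e_1},\dots,\beta_{e_n})$, a vertex of $S$ whose operation straddles the black level does not determine where the black level sits inside it. For example, for the one-vertex linear tree with leaf $a$ and root $r$, the corolla $(a,0)\to(r,1)$ lies in both percolation diagrams: it is an inner face of $(a,0)\to(a,1)\to(r,1)$ and of $(a,0)\to(r,0)\to(r,1)$. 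So your sets $E_0(S)$, $E_1(S)$ of edges ``inside composites'' are not well defined. A straddling composite does not pin the cut at its interior vertices, so a case analysis designed to show that it does cannot succeed.

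\textbf{The missing observation: only the edge colours of $S$ matter.}
\begin{itemize}
\item The colours of $\Omega(T_k)$ are $(e,0)$ for $e$ at or above the cut (leaf side) and $(e,1)$ for $e$ at or below it.
\item The operad $\Omega(T)\otimes_{BV}\Delta^{1}$ has at most one operation of each profile. Pushing all black vertices up to the $0$-coloured inputs gives a normal form supported on the unique subtree of $T$ with the given root and leaves.
\item For any such profile whose colours all belong to $T_k$, the corresponding subtree of $T_k$ exists, since a path crosses the cut at most once.
\item Hence $\Omega(T_k)$ is the full suboperad on its colours, and a dendrex lies in $\Omega[T_k]$ if and only if all its edge colours do.
\end{itemize}

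\textbf{The resulting constraints.} A colour $(e,0)$ forces the white vertex directly above $e$ (if any) to the leaf side. A colour $(e,1)$ forces the vertex directly below $e$ (if any) to the root side. Let $D$ be the set of white vertices labelled $1$; it is closed under passing toward the root. The constraints are conditions of the form $w\in D$ or $w\notin D$. So the diagrams containing a given dendrex form a family closed under unions and intersections of their $D$'s.

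\textbf{Conclusion.} In particular $\Omega[T_i]\cap\Omega[T_j]\subseteq\Omega[T_k]$ for the diagram with $D_k=D_i\cup D_j$, which is the meet of $T_i$ and $T_j$ in the partial order. Your dendrex-dependent $T_m$, which lies below this meet, also works once this criterion is in place.

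Finally, if ``$k\leq i,j$'' is read literally as indices in the linear extension, the statement is immediate with $k=\min(i,j)$. The argument above proves the substantive partial-order version.
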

	
	Following standard notation from model category theory, an inclusion of dendroidal objects is called \textit{inner anodyne} if it is obtained as an iterated pushout of inner horn fillings.
	
	\begin{lemma}[{\cite[Theorem~5.2]{ci-mo-11}}]\label{lm:fill}
		\begin{enumerate}[label=(\roman*)]
			\item The inclusion $A_{i-1}\to A_{i}$ is inner anodyne for any $0<i<N$;
			\item There is the equality $A_{N}=\Omega[T_{N}]\cup A_{N-1}$, in which $A_{N}$ is obtained from $\Lambda_{r}[T_{N}]\subseteq A_{N-1}$ by the outer (root) horn filling.
		\end{enumerate}
	\end{lemma}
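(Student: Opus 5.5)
Since this lemma is quoted from Cisinski--Moerdijk, I would follow their argument, adapted to the present $C$-colored planar setting. The plan is to analyze, for each $0<i\le N$, the pushout square
\[
\Omega[T_i]\cap A_{i-1}\longrightarrow \Omega[T_i],\qquad \Omega[T_i]\cap A_{i-1}\longrightarrow A_{i-1},
\]
whose pushout is $A_i$. Everything then reduces to identifying the subpresheaf $\Omega[T_i]\cap A_{i-1}$ of $\Omega[T_i]$ as a union of faces of $T_i$.

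\emph{Step 1: identify the faces of $T_i$.} The faces of $T_i$ are of three kinds.
\begin{enumerate}[label=(\alph*)]
\item Faces contracting an inner edge between two white vertices, or pruning an outer white vertex. These project to faces of $T$, so they lie in $\partial\Omega[T]\otimes\Delta^1\subseteq A_0$.
\item Faces contracting an inner edge $e$ joining a black vertex to a white vertex directly \emph{above} it, so that the black unary operation is pushed past the white vertex. Such a face is a common face of $T_i$ and of the percolation diagram $T_j$ in which the black vertex sits just below; that $T_j$ precedes $T_i$ in the partial order. By the linear extension, $j<i$, so this face lies in $A_{i-1}$.
\item Faces contracting an edge joining a black vertex to a white vertex directly \emph{below} it. Such a face is shared only with a percolation diagram later in the order.
\end{enumerate}
Here "above" and "below" refer to the order in which $T_1$ has the black vertices at the leaves and $T_N$ at the root.

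\emph{Step 2: show the intersection is exactly those faces.} Using Lemma~\ref{lm:inter}, any simplex of $\Omega[T_i]\cap\Omega[T_j]$ with $j<i$ factors through some $T_k$ with $k<i$, and hence through a face of $T_i$ of type (a) or (b). Conversely, the faces of type (c) are not contained in $A_{i-1}$: their black-white pattern forces their factorizations to go through $T_j$ with $j>i$ only.

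\emph{Step 3: filling for $0<i<N$.} For $i<N$ there is at least one face of type (c), since some black vertex can still move down. The inclusion $\Omega[T_i]\cap A_{i-1}\subseteq\Omega[T_i]$ is then the union of all faces except the inner faces of type (c). When there is exactly one missing face, this is precisely an inner horn $\Lambda_e[T_i]$. In general it is a "generalized inner horn", missing a set $E$ of inner edges, and I would show it is inner anodyne by the standard induction on $|E|$: fill successively by inner horns of faces $\partial_{E'}T_i$. This gives (i).

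\emph{Step 4: the case $i=N$.} Here $T_N$ has a single black root vertex. The faces of $T_N$ are the inner faces contracting the black vertex's edge into the white root-adjacent vertex, which are of type (b) and hence in $A_{N-1}$, the white faces, and the outer face removing the black root vertex. That outer face is $T$ itself, coloured at $1$, and it is not in $A_{N-1}$, because $A_0$ contains only $\Omega[T]\otimes 0$. Hence the intersection is $\Lambda_r[T_N]$, and $A_N$ is obtained by a root horn filling, giving (ii).

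\emph{Main obstacle.} I expect the hardest part to be Step 2, especially the precise statement that type (c) faces are absent from $A_{i-1}$, together with the combinatorial bookkeeping of the generalized inner horn in Step 3. For both I would rely on Lemma~\ref{lm:inter} and on the description of percolation diagrams as order ideals (cuts) of the vertex poset of $T$.
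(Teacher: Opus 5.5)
The paper gives no proof of this lemma; it only cites Cisinski--Moerdijk. Your outline has the right overall shape, but Steps 1--3 contain a genuine gap. Your list of codimension-one faces of $T_i$ leaves out the outer faces obtained by pruning a black vertex that sits on a leaf of $T$; such a black vertex is a leaf vertex of $T_i$. These faces are not in $A_{i-1}$, for three reasons. They still involve every vertex of $T$, so they are not in $\partial\Omega[T]\otimes\Delta^1$. They are not an end $\Omega[T]\otimes\epsilon$ unless $T$ has only one leaf. And since the colours of the white vertices force the positions of the black ones, they lie in no other $\Omega[T_j]$. So $\Omega[T_i]\cap A_{i-1}$ generally misses outer faces as well as your type (c) inner faces. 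It is then not a generalized inner horn, and the induction on $|E|$ does not apply. Take the corolla $T$ with vertex $v$ and leaves $a,b$, so $N=2$. None of the four codimension-one faces of $T_1$ lies in $A_0$: two are of type (c), and two are obtained by pruning one black leaf vertex.

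This gap is tied to an orientation slip in Step 4. The black vertex is the operation $0\to 1$, so every white vertex of $T_N$ lies above the black root and is coloured $0$. Pruning the root therefore gives $\Omega[T]\otimes 0$, which lies in $A_0$ as you (and the statement) define it, so $\Omega[T_N]\cap A_{N-1}\neq\Lambda_r[T_N]$. Worse, with $\Omega[T]\otimes 0\subseteq A_0$, claim (i) fails for the corolla above. There $\Omega[T_1]\cap A_0$ is only the two black corollas and the root edge $(r,1)$, so $A_0\to A_1$ freely adjoins the binary operation $v\otimes 1$. Hence $\tau(A_0)\to\tau(A_1)$ is not an isomorphism, whereas $\tau$ inverts inner anodyne maps. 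For (ii) you need $\Omega[T]\otimes 1\subseteq A_0$ instead. But that end is the face of $T_1$ obtained by pruning \emph{all} black leaf vertices at once, which has codimension $|L(T)|$; this is exactly why the single prunings are missing.

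The repair is to refine each step $A_{i-1}\subseteq A_i$:
\begin{enumerate}
\item first adjoin the faces $\partial_{L'}T_i$ obtained by pruning a set $L'$ of black leaf vertices, in order of decreasing $|L'|$;
\item only then adjoin $T_i$ itself.
\end{enumerate}
At each stage the faces still missing are exactly the type (c) inner faces of the current tree, a nonempty set when $i<N$. Every further pruning has already been adjoined, so each stage is a generalized inner horn extension.

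Separately, in Step 2, Lemma~\ref{lm:inter} at best reduces you to some $T_k$ strictly below $T_i$ in the partial order. Getting into a type (b) face needs its own argument. For example, take $u$ lowest among the white vertices coloured $0$ in $T_i$ but $1$ in $T_k$. Then in $T_i$ the black vertex sits on the output edge $e_u$ of $u$. A common dendrex cannot use the colour $(e_u,0)$, so it lies in $\partial_{(e_u,0)}T_i$ or in $\partial\Omega[T]\otimes\Delta^1$.
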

	
	\subsection{Horn fillers for homotopy algebras over operads}\label{ssec:hornext}
	
	For brevity, we define horns for algebras here only in the case of free operads.
	Horns for algebras over arbitrary operads are defined in a similar way using $P$-trees, which requires a more heavy notation and will be used in Proposition~\ref{pr:horncorr} only.
    (We use the tautological $\Omega(T)$-labeling $\psi_{can}$ of any vertex $v\in iv(T)$ of the tree $T$ by the respective corolla $C_v$, and abuse slightly the notation by writing $\theta_{T}$ instead of $\theta_{\mathcal{T}}$, $\mathcal{T}=(T,\psi_{can})$ for a planar tree $T$.)

	\begin{definition}\label{defn:alghorn}
		For a homotopy algebra $A\in \Alg(W\Omega(T),\Top)$ and a tree $T$ the map 
		\begin{equation}\label{eq:hornalg}
			\Lambda_{x}(A,T)\colon \Lambda_{x}(T)\to \End UA, 
		\end{equation}
		is called the \emph{horn} of $(A,T)$, where $\Lambda_{x}(T)$ is the \emph{$x$-box}, i.e. the union of all facets of the cube $I^{m}$ except the one corresponding to $x$.
		Here, if $x=e$ is an inner edge of $T$ then the horn is called \emph{inner} and the facet label is $\theta_{T/e}$.
		If $x=v$ is an external vertex of $T$ then the horn is called \emph{outer} and the facet label is $\theta_{B}\circ \theta_{U}$.
		(Formally, 
		\begin{equation}\label{eq:colhornop}
			\Lambda_{x}(T)=\underset{S\in \partial_{x} T}{\col}\  W\Omega(S)
		\end{equation}
		denotes the colimit in the category of operads of the resolved free operads $W\Omega(S)$ over all dendroidal faces $S$ of $T$ except the face $x$, compare with~\S\ref{ssec:dset}.)
		For a horn $\Lambda_{x}(A,T)$ of a homotopy algebra $A$, its \textit{filler} is a map fitting into the commutative diagram
		\[
		\begin{tikzcd}
			\Lambda_{x}(T)\arrow{r}{\Lambda_{x}(A,T)}\arrow[hook]{d} & \End UA.\\
			W\Omega(T)\arrow[dashed]{ru}
		\end{tikzcd}
		\]
	\end{definition}
	
	\begin{remark}
		The horn $\Lambda_{x}(A,T)$ of a homotopy $P$-algebra $A$ is in natural correspondence with the horn of the respective presheaf $N_{P}^{h} A$ (see Theorem~\ref{thm:halgdiag}, Proposition~\ref{pr:horncorr}). This explains our choice of terminology.
	\end{remark}
	
	We prove the following lemma based on the construction from~\cite{co-po-86},~\cite[Theorem 4.2]{ci-mo-11}.
	
	\begin{lemma}\label{lm:exthorn}
		Let $A$ be a homotopy algebra over $\Omega(T)\in\Oper_{C}\Set$ for $T\in \Omega_{C}$.
		\begin{enumerate}[label=(\roman*)]
			\item For any inner edge $e\in ie(T)$ the inner horn $\Lambda_{e}(A,T)$ of $A$ has a filler.
			\item For any external vertex $v\in V_{T}$ suppose that it is unary and the induced map $\theta_{C_{v}}$ (see~\eqref{eq:thetamap}) is a homotopy equivalence.
			In addition, if $v$ is the root vertex, assume that there is a unique inner edge incident to $v$ in $T$.
			Then the external horn $\Lambda_{v}(A,T)$ of $A$ has a filler.
		\end{enumerate}
	\end{lemma}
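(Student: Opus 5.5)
The plan is to work directly with the adjoint maps $\theta_{S}\colon I^{m(S)}\to \End UA$ from \eqref{eq:thetamap}. A filler of $\Lambda_{x}(A,T)$ is a single continuous map $\theta_{T}\colon I^{m}\to\End UA$ extending the given map on the box $\Lambda_{x}(T)\subset I^{m}$. Here $m=|ie(T)|$, and the box is described by \eqref{eq:colhornop}. The extension must also satisfy two compatibilities: with the degeneracy relations of $W$ (removing a unary vertex and taking the maximum of adjacent lengths), and with the faces of lower trees. Since $W\Omega(T)$ is free on the cube of $T$ modulo its boundary identifications, once $\theta_{T}$ is defined on $I^{m}$ compatibly on the boundary, the extension to a $W\Omega(T)$-algebra is automatic. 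The universal property of the colimit \eqref{eq:colhornop} then gives the dashed arrow.

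For (i), the key observation is that the pair $(I^{m},\Lambda_{e}(T))$ is homeomorphic to $(I^{m-1}\times I,\ I^{m-1}\times\{1\}\cup\partial I^{m-1}\times I)$, with the missing facet being the $t_{e}=0$ facet. This pair admits a retraction $\rho\colon I^{m}\to\Lambda_{e}(T)$, the standard box retraction. Set $\theta_{T}:=\Lambda_{e}(A,T)\circ\rho$. On the missing facet this defines $\theta_{T/e}$. That is consistent, because $T/e$ is a face of $T$ and no other condition is imposed on it. Hence no homotopy-theoretic input is needed. The only care required is that $\rho$ respect the identifications for unary vertices. These do not occur when $\Omega(T)$ has no added degeneracies, since $\Omega(T)$ is free with unit only. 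If degenerate trees arise, I would choose $\rho$ coordinatewise in a max-compatible way, as in \cite{co-po-86}.

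For (ii), the missing facet is $\theta_{B}\circ\theta_{U}$ for the splitting at the edge $e$ incident to $v$. Since $v$ is unary with $\theta_{C_{v}}=:f$ a homotopy equivalence, the missing facet has the form $f\circ\theta_{\partial_{v}T}$ (root case, using uniqueness of the inner edge at $r$) or $\theta_{\partial_{v}T}\circ_{v} f$ (leaf case). The extension problem becomes the following: given a map $I^{m-1}\times\{1\}\cup\partial I^{m-1}\times I\to\End UA$ whose restriction to $\partial I^{m-1}\times\{0\}$ factors through postcomposition with $f$, extend it to $I^{m}$ so that the restriction to $t_{e}=0$ also factors through $f$. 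I would reformulate this as a relative lifting problem against the map $f_{*}\colon\Map(\prod A(l),A(c))\to\Map(\prod A(l),A(r))$, or $f^{*}$ in the leaf case. Because $f$ is a homotopy equivalence, $f_{*}$ (resp. $f^{*}$) is a homotopy equivalence of mapping spaces. The homotopy extension property of $(I^{m-1},\partial I^{m-1})$ then lets me solve the problem, as in the outer-horn argument of \cite[Theorem 4.2]{ci-mo-11}. Concretely, I would first extend over $\partial I^{m-1}$ using the given data, then choose a homotopy inverse $g$ and homotopies $fg\simeq \Id$, $gf\simeq\Id$. I would define the bottom facet via $g$ applied to the existing data, and correct it on the boundary using the homotopies. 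Finally, I would fill the remaining cylinder by (i)-style box retraction.

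I expect the main obstacle to be the boundary correction in (ii). A naive choice $g\circ(\text{data})$ agrees on $\partial I^{m-1}\times\{0\}$ with the prescribed factorization only up to homotopy, not strictly. Resolving this needs a half-adjoint homotopy equivalence, with coherent homotopies $fg\simeq\Id$ and $gf\simeq\Id$ satisfying the triangle identity. That coherence produces a strict factorization on the boundary, and the homotopies are spliced into the extra collar coordinate $t_{e}$. Working in CGWH spaces ensures that $\Map$ is well behaved and that these constructions are continuous.
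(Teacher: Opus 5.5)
Your proposal is correct and takes the paper's route. For the inner horn you use a box retraction. For the outer horn (root case) you reduce to finding $\theta_U$ and $\theta_T$ with $\theta_T$ restricting to $\theta_{C_r}\circ\theta_U$ on the missing facet, and solve this using that $\theta_{C_v}$ induces a homotopy equivalence of mapping spaces. At that point the paper just cites the Cordier--Porter unary filler, whereas you sketch the argument itself via a half-adjoint homotopy inverse and the homotopy extension property. One harmless relabeling: in the paper's convention the missing outer facet is $t_e=1$, not $t_e=0$.
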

	\begin{proof}
		In $(i)$ one chooses any (e.g. linear deformation retraction from the cube to the respective box) filler of the box corresponding to the facet $\theta_{T/e}$ of the cube $\theta_{T}$ establishing the existence of the inner horn filler.
		
		In $(ii)$ we consider only the case when $v=r$ is the root vertex of $T$ (the leaf case is proved in a similar way).
		Let $U=T\setminus r$ with the root vertex $r'$.
		Since $r$ is a unary vertex by the assumption, $l(T)=l(U)$.
		Introduce
		\[
		X:=\Map(\prod_{l\in L(T)} A(l),A(r')),\ 
		Y:=\Map(\prod_{l\in L(T)} A(l),A(r)).
		\]
		Here by a slight abuse of the notation we denote the root $r$ edge of $T$ by $r$.
		Let $m$ be the number of inner edges in the tree $T$.
		To establish a filler for the horn $\Lambda_{r}(A,T)\colon \Lambda_{r}(T)\to Y$ one must find a map $\theta_{T}\colon I^{m}\to Y$ extending $\Lambda_{r}(A,T)$, and another map $\theta_{U}\colon I^{m-1}\to X$ such that 
		\begin{equation}\label{eq:homcomdec}
		\theta_{T}|_{I^{m-1}}=\theta_{C_{r}}\circ \theta_{U}
		\end{equation}
		holds.
		One can explicitly construct the strict topological filler by applying the BV cylinder filler in unary case, exactly as it is done by Cordier and Porter~\cite[Theorem~3.1]{co-po-86},
		since $\theta_{C_{r}}$ is a homotopy equivalence.
		This finishes the proof.
	\end{proof}
	
	\begin{remark}
		Note that the assumption in Lemma~\ref{lm:exthorn} is necessary: if the root had multiple incident inner edges, pruning it would yield a forest, meaning the respective dendroidal horn is undefined in $\Omega_{C}$.
	\end{remark}
	
	\subsection{Proof of Theorem~\ref{thmm:cwapp}}\label{ssec:pf1}
	
	We may reduce any homotopy $P$-algebra to the case of a homotopy algebra over a free operad by the following lemma.
	
	\begin{lemma}\label{lm:red}
		For any operad $P$ in sets one has
		\[
		\Alg(WP,\Top)\cong \lim_{\Omega[T]\to NP} \Alg(W\Omega(T),\Top),
		\]
	\end{lemma}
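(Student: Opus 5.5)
The plan is to reduce the isomorphism to a statement about operads and then use the fact that $W$ preserves the relevant colimit. By \eqref{eq:algoper}, a $WP$-algebra structure on a $C$-colored space $X$ is the same as an operad map $WP\to\End X$. Similarly, a compatible family of $W\Omega(T)$-algebras indexed by $\Omega[T]\to NP$, all on the same underlying object, is a compatible family of operad maps $W\Omega(T)\to \End X$. So the claim follows once we show
\[
WP\cong \underset{\Omega[T]\to NP}{\col}\, W\Omega(T)
\]
in topological $C$-colored operads, with the colimit taken over the category of elements of $NP$, which is $\Omega P$ by \S\ref{ssec:ptrees}. Here $\Omega[T]\to NP$ means an operad map $\psi\colon\Omega(T)\to P$, i.e.\ a $P$-tree. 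The limit of algebra categories is taken over the underlying objects $X\in\Top^C$, which all objects in the diagram share. Hence the limit of the algebra categories is computed fibrewise over $X$ as $\Oper(\col W\Omega(T),\End X)$. Morphisms match in the same way, since algebra maps are maps of $C$-colored objects commuting with all structure maps.

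The first step is the set-level identity $P\cong\col_{\Omega P}\Omega(T)$, i.e.\ that the dendroidal nerve is fully faithful. I would quote this from \cite{he-mo-22}, or check it directly: every operation of $P$ is the image of a corolla, and the relations come from inner faces (compositions) and degeneracies (units).

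The second step is to show that $W$ commutes with this colimit. For this I would use the explicit description of $WP(c_1,\dots,c_n;c)$ as a quotient of $\coprod_{\mathcal T} I^{ie(T)}$ over $P$-trees $\mathcal T$ with the given boundary colors. The quotient is by the two relations of \S\ref{ssec:homoalg}: zero-length edges are contracted using composition in $P$, and unary unit vertices are removed using the maximum of the adjacent lengths. I would then build the comparison map $\col W\Omega(T)\to WP$ from the maps $W\psi$, and its inverse by sending the cube of $\mathcal T=(T,\psi)$ to the tautological cube of $T$ in $W\Omega(T)$ under the cocone leg indexed by $\psi$. The real work is checking that this inverse respects the relations. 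A length-$0$ edge $e$ is sent into $W\Omega(T)$, where it is identified with the tautological cube of $T/e$ pushed along the inner face $\partial_e\colon T/e\to T$. That cube is the cube indexed by the $P$-tree $(T/e,\psi\circ\partial_e)$, whose decoration is exactly the composite in $P$. Unit vertices are handled the same way, using degeneracies. Operadic composition in the colimit corresponds to grafting trees: the grafted tree receives both trees as outer faces, so composites are also matched.

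I expect the main obstacle to be the topology of the colimit. It is taken in $\Oper_C\Top$, not arity-wise in $\Top$, so free composites could a priori add extra operations. I would handle this by observing that the indexing category is built from grafting and faces, so every composite in the colimit is already represented by a single grafted $P$-tree. This would show that the arity-wise colimit is already an operad. Both sides are then quotients of the same disjoint union of cubes by the same equivalence relation, and in CGWH quotients commute with colimits, which gives a homeomorphism.
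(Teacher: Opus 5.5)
Your proposal is correct and follows the same route as the paper: via \eqref{eq:algoper} everything reduces to the identification $WP\cong\col_{\Omega[T]\to NP}W\Omega(T)=W_{!}(NP)$, after which $\Oper(-,\End X)$ turns this colimit into the limit. The one difference is that the paper asserts this last isomorphism without argument, whereas you verify it by matching cube presentations (inner faces give the length-$0$ relation, degeneracies the unit relation, grafting the composition); your worry about the topology of the operadic colimit can also be avoided by checking both composites of your comparison maps on the cocone legs and on the generating cubes, using the universal properties.
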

	\begin{proof}
		It is enough to establish the bijection for a fixed $C$-colored object $X$ in $\Top$ by taking union over all possible ones.
		Writing the presheaf $NP$ as a colimit of representables one has
		\[
		W_{!}(NP)\cong
		\underset{\Omega[T]\to NP}{\col} W\Omega(T)\cong
		\underset{\Omega(T)\to P}{\col} W\Omega(T)\cong	
		WP,
		\]
		where $W_{!}\colon \dset \to \Oper\Top$ is the left Kan extension of $T\mapsto W\Omega(T)$.
		Therefore, 
		\[
		\Oper(WP,\End X)\cong
		\Oper\left(\underset{\Omega[T]\to NP}{\col} 	W\Omega(T),\End X\right)\cong
		\underset{\Omega[T]\to NP}{\lim} \Oper(W\Omega(T),\End 	X).
		\]
		It remains to apply~\eqref{eq:algoper} to both sides of the last equivalence.
	\end{proof}
	
	Now we can prove a slightly generalized version of Theorem~\ref{thmm:cwapp}.
	
	\begin{theorem}\label{thm:gen}
		\begin{enumerate}[label=(\roman*)]
			\item For any homotopy algebra $A\in\Alg(WP,\Top)$ and any collection $u_{c}\colon A(c)\to B(c)$ of homotopy equivalences, where $B(c)$ is a CW complex for any $c\in C$, there exist a CW homotopy algebra $B\in\Alg(WP,\Top)$ with objects $\lb B(c)\rb_{c\in C}$ and a homotopy isomorphism $f\colon A\Rightarrow B$ satisfying $f(1_{c})=u_{c}$, $c\in C$.
			\item For any homotopy algebra $B\in\Alg(WP,\Top)$ and any collection $u_{c}\colon A(c)\to B(c)$ of homotopy equivalences, where $A(c)$ is a CW complex for any $c\in C$, there exist a CW homotopy algebra $A\in\Alg(WP,\Top)$ with objects $\lb A(c)\rb$ and a homotopy isomorphism $f\colon A\Rightarrow B$ satisfying $f(1_{c})=u_{c}$, $c\in C$.
		\end{enumerate}
	\end{theorem}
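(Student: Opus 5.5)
We construct the homotopy morphism $f$, an algebra over $W(P\otimes_{BV}\Delta^{1})$, together with the structure of $B$, by induction over $P$-trees ordered by the number of vertices (and, within a fixed tree, by inner edges). By Lemma~\ref{lm:red}, applied to $P\otimes_{BV}\Delta^{1}$ as well as to $P$, it suffices to give compatible data for every $\Omega[T]\to NP$, i.e.\ for each $P$-tree $\mathcal{T}$. These data are a structure map $\phi^{B}_{\mathcal{T}}$ and structure maps for all percolation diagrams $T_{1},\dots,T_{N}$ of the cylinder $\Omega[T]\otimes\Delta^{1}$ of \S\ref{ssec:cyldec}, compatible with faces and degeneracies. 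Since $P$ is unital, degeneracies are handled by the maximum-length convention in $WP$, so we only need to treat nondegenerate trees. We start the induction with $\eta_{c}$, where we set $f(0\to_{c}1)=u_{c}$, and with the unit, for which we take identities.

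Now fix $\mathcal{T}$, and assume that everything has been defined on all proper faces. This means that $f$ is already given on $A_{0}=\partial\Omega[T]\otimes\Delta^{1}\cup\Omega[T]\otimes 0$, where the $0$-end is $A$ itself. We follow the linear extension $T_{1},\dots,T_{N}$.
\begin{enumerate}[label=(\alph*)]
\item By Lemma~\ref{lm:fill}(i), each step $A_{i-1}\to A_{i}$ with $i<N$ is inner anodyne. We fill the corresponding inner horns of the algebra by Lemma~\ref{lm:exthorn}(i), using the linear box retraction. Lemma~\ref{lm:inter} guarantees that the data on the intersections have already been fixed, so these choices are consistent.
\item The last step is the root horn $\Lambda_{r}[T_{N}]$ of Lemma~\ref{lm:fill}(ii). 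Its root vertex is the black unary vertex $0\to_{r}1$, whose value $u_{r}$ is a homotopy equivalence. Lemma~\ref{lm:exthorn}(ii) therefore provides a filler. This filler produces the missing cube $\theta_{U}$, which is exactly the top face, i.e.\ the new structure map $\phi^{B}_{\mathcal{T}}$, together with the homotopy $\theta_{T_{N}}$ witnessing~\eqref{eq:homcomdec}.
\end{enumerate}
Part (ii) is symmetric. There the unknown algebra is at the $0$-end, so we start from $\partial\Omega[T]\otimes\Delta^{1}\cup\Omega[T]\otimes1$, run the percolation order in reverse, and finish with leaf horn fillers. These apply because at every leaf vertex the unary map $u_{l}$ is a homotopy equivalence.

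\textbf{Main obstacle: cellularity.} The filler from step~(b) gives some map $I^{m}\times\prod_{l}B(l)\to B(r)$. On the boundary $\partial I^{m}\times\prod B(l)$ this map is already cellular, by induction and because facet inclusions of cubes are cellular. The interior, however, need not be cellular. To repair it, we apply the relative cellular approximation theorem to the adjoint map $\phi^{B}_{\mathcal{T}}$, relative to the subcomplex $\partial I^{m}\times\prod B(l)$. This yields a homotopy rel boundary to a cellular map $\phi'$.

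The resulting homotopy must then be absorbed into the cylinder data, without disturbing anything already constructed. We do this by pasting: we reparametrize the filler $\theta_{T_{N}}$ by concatenating it, along the face $\theta_{C_{r}}\circ\theta_{U}$, with $u_{r}$ composed with the cellular-approximation homotopy. This modifies $\theta_{T_{N}}$ only in a collar of the facet corresponding to $U$, and the modification is constant on the other facets because the homotopy is rel boundary. Consequently the horn boundary conditions are still satisfied.

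The delicate point is that this pasting must be done with the adjoint maps $\phi$ and not with $\theta$, by the Remark after Definition~\ref{def:cwhalg}, and it must keep compatibility with degeneracies. For the latter we perform the approximation only on nondegenerate $P$-trees and define the degenerate ones by the maximum-of-lengths rule; we check that this is compatible because a unit vertex just collapses a coordinate.

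With these choices the induction produces a CW $WP$-algebra $B$ and a homotopy morphism $f$ with $f(1_{c})=u_{c}$. Since each $u_{c}$ is a homotopy equivalence, $f$ is a homotopy isomorphism. Theorem~\ref{thmm:cwapp} then follows by taking $B(c)=A(c)$ and $u_{c}=\mathrm{id}$.
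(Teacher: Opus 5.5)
Your architecture is the paper's: reduction by Lemma~\ref{lm:red}, the percolation filtration, inner horns filled by Lemma~\ref{lm:exthorn}~(i), one external horn filled by Lemma~\ref{lm:exthorn}~(ii), then relative cellular approximation and pasting. However, step~(b) is misoriented, and as written it never constructs $\phi^{B}_{\mathcal{T}}$.

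\textbf{The root horn of $T_N$ produces the source structure, not $\phi^{B}_{\mathcal T}$.} The root vertex of $T_N=\sigma_{er}T$ is the black vertex $0\to_r 1$. Pruning it leaves $U=\partial_rT_N$, the copy of $T$ at level $0$, with leaves $(l,0)$ and root $(r,0)$. So $\theta_U$ takes values in $\Map(\prod_l A(l),A(r))$ and is the \emph{source} structure. Indeed, in \eqref{eq:homcomdec} one has $\theta_{C_r}=u_r\colon A(r)\to B(r)$, so $\theta_{C_r}\circ\theta_U$ is not even defined if $\theta_U=\phi^{B}_{\mathcal T}$.
\begin{itemize}
\item In part~(i), $\theta_U=\phi^{A}_{\mathcal T}$ already lies in $\Omega[T]\otimes 0\subseteq A_0$. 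So no root horn arises at $T_N$, and nothing in your steps produces $\phi^{B}_{\mathcal T}$. (The root horn omits exactly $\Omega[T]\otimes 0$, so Lemma~\ref{lm:fill}~(ii) only makes sense with $\Omega[T]\otimes 1$ in $A_0$.)
\item The unknown face $\Omega[T]\otimes 1$ is a face of $T_1$, reached by pruning its black \emph{leaf} vertices. So (i) needs leaf-horn fillers. These use that the $u_l$ are equivalences: you precompose with an inverse of $\prod_l u_l$, rather than postcomposing with an inverse of $u_r$.
\item From the $0$-end the filtration also does not close with one outer horn. Already for the binary corolla $T=C_2$, $T_N$ enters by an inner horn, but $T_1$ misses all four of its codimension-one faces, in either order. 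The leaf horns must therefore be interleaved with inner horns on the mixed faces.
\end{itemize}

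\textbf{Part (ii) has the same swap.} If $\Omega[T]\otimes1\cup\partial\Omega[T]\otimes\Delta^1$ is given, then $T_N$ misses both $\partial_rT_N=\Omega[T]\otimes 0$ and its inner face at the root edge. So the reversed order cannot start at $T_N$. It is the order $T_1,\dots,T_N$, ending with the root horn at $T_N$, that produces the unknown source-side $\theta_U$.

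This is how the paper uses the root horn. The filler yields $\theta_U$, and the relative cellular approximation is applied to $\phi_U\colon I^{m-1}\times\prod_l A(l)\to A(r(U))$, i.e.\ on the leaf-side spaces. Your cellular approximation and pasting step (the rel-boundary homotopy composed with $u_r$ in a collar) is therefore sound when the new CW algebra sits at the source end. That is the configuration of (ii).

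For (i), and for Theorem~\ref{thmm:cwapp} in the direction $A\Rightarrow A_{cw}$ with $u_c=\mathrm{id}$, you need one of two repairs:
\begin{itemize}
\item run the dual argument with leaf horns, pasting the approximating homotopy precomposed with the $u_l$; or
\item invert the homotopy isomorphism $A_{cw}\Rightarrow A$ that the root-horn argument produces.
\end{itemize}
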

	\begin{proof}
		By Lemma~\ref{lm:red}, it is enough to consider the case $P= \Omega(T)$ for any tree $T$.
		If $T$ has a unique inner vertex then the claim holds by the homotopy equivalence assumption.
		Using the cylinder decomposition (lemmas~\ref{lm:inter},~\ref{lm:fill}) and Lemma~\ref{lm:exthorn}~$(i)$, one finds inner horn fillers (defining the higher homotopies for the algebra structure) for all trees in the cylinder decomposition except the last one, see \eqref{eq:homcomdec}.
		At the last step one finds the external horn filler by Lemma~\ref{lm:exthorn}~$(ii)$, because by the condition $u_{c}$ are homotopy equivalences for all $c\in C$.
		
		We modify the obtained structure map of the homotopy algebra $A$ as follows.
		Take the adjoint 
		\[
		\phi_{T}|_{I^{m-1}}=\phi_{C_{r}}\circ \phi_{U}
		\]
		to~\eqref{eq:homcomdec} using exponentiation.
		Since $\phi_{U}$ is a map between CW complexes, there exists a cellular approximation relative to the boundary~\cite[Theorem~4.8]{ha-01}.
		Consider the cellular approximation for the restriction of $\phi_{U}$ to $I^{m-1}\times A_{in}\to A_{out}$ relative to $\partial I^{m-1}\times A_{in}$, where
		\[
		A_{in}:=\prod_{l\in l(T)}A(l),\ 
		A_{out}:=A(r(U)).
		\]
		Paste the homotopy into $I^{m}\times A_{in}$ to obtain the desired map (changing it neither on $\partial I^{m-1}\times A_{in}$ nor on the $r$-box), see fig.\ref{fig:hompas}.
		Then the restriction is cellular on $I^{m-1}\times A_{in}$, which completes the construction.
		
		The proof of $(ii)$ is similar to the above, see~\cite[Appendix B]{ci-mo-11} for the relevant details.
	\end{proof}
	
	\begin{figure}
		\centering
		\begin{minipage}[t]{0.1\textwidth}
			\centering
			\begin{tikzpicture}
				\draw[thick,blue,pattern=horizontal lines,pattern color=blue] (1,0) -- (0.5,0.5)--(1,1);
				\draw[thick, draw=black] (0,0) -- (1,0) -- (0.5,0.5) -- (1,1) -- (0,1) -- (0,0);
				\draw[thick, draw=black] (0,0) rectangle ++(1,1);
			\end{tikzpicture}
		\end{minipage}
		\begin{minipage}[t]{0.1\textwidth}
			\centering
			\begin{tikzpicture}
				\draw[thick, draw=black] (0,0) -- (1,0) -- (0.5,0.5) -- (1,1) -- (0,1) -- (0,0);
			\end{tikzpicture}
		\end{minipage}
		\begin{minipage}[t]{0.1\textwidth}
			\centering
			\begin{tikzpicture}
				\draw[thick,fill=blue, opacity=0.2] (1,0) -- (0.5,0.5)--(1,1);
				\draw[thick, draw=green] (1,0) -- (1,1);
				\draw[thick, draw=black] (0,0) -- (1,0) -- (0.5,0.5) -- (1,1) -- (0,1) -- (0,0);
			\end{tikzpicture}
		\end{minipage}
		\caption{Pasting of a relative CW-approximation homotopy to the higher homotopy $\phi_{U}$.
		}\label{fig:hompas}
	\end{figure}
	
	Corollary~\ref{corm:rect} about rectification of CW homotopy algebras (see~\S\ref{sec:intro}) is a direct consequence of the following theorem.
	
	\begin{theorem}\label{thm:cwapp}
		Let $P$ be any unital $C$-colored operad in sets.
		Then for every topological algebra $A\in \Alg(P,\Top)$ that is a CW complex in each color, there exists a homotopy isomorphism $A\Rightarrow \widetilde{A}_{cw}$ with a strict topological CW algebra $\widetilde{A}_{cw}$.
	\end{theorem}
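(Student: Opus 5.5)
The plan is to combine Theorem~\ref{thm:gen}, Corollary~\ref{cor:algrect} and Lemma~\ref{lm:rectzag}, and then compose homotopy isomorphisms by inner horn filling.

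First, regard the strict algebra $A\in\Alg(P,\Top)$ as a homotopy algebra $\eps^{*}A\in\Alg(WP,\Top)$ with trivial higher homotopies, i.e. every $\phi_{\mathcal{T}}$ factors through the projection $I^{m}\times\prod_l A(l)\to\prod_l A(l)$ followed by the composite $P$-action. Apply Theorem~\ref{thm:gen}~$(i)$ with $B(c)=A(c)$ and $u_c=\Id_{A(c)}$. This yields a CW homotopy algebra $A_{cw}$ with $A_{cw}(c)=A(c)$ and a homotopy isomorphism $f\colon A\Rightarrow A_{cw}$ with $f(0\to_c1)=\Id$. This is exactly Theorem~\ref{thmm:cwapp}.

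Second, set $\widetilde{A}_{cw}:=\eps_{!}A_{cw}=|B_\bullet(P,WP,A_{cw})|$. By Corollary~\ref{cor:algrect} this is a strict CW $P$-algebra, since $A_{cw}$ is a CW homotopy algebra and $P$ is unital, so the degeneracies are identities. Lemma~\ref{lm:rectzag} gives a natural zigzag of weak equivalences of $WP$-algebras
\[
\widetilde{A}_{cw}\xleftarrow{|\eps_*|}|B_\bullet(WP,WP,A_{cw})|\xrightarrow{\sim}A_{cw}.
\]
A strict $WP$-algebra morphism is in particular a homotopy morphism, with higher homotopies constant in the cube coordinates. All three spaces in each color are CW, or at least of the homotopy type of CW complexes, because the bar construction is Reedy cofibrant on CW input. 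So by Whitehead's theorem each map is a homotopy equivalence colorwise, and both arrows are homotopy isomorphisms.

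Third, invert the backward arrow. Theorem~\ref{thm:gen}~$(ii)$, or equivalently the external (root or leaf) horn filler of Lemma~\ref{lm:exthorn}~$(ii)$ applied along the cylinder decomposition of \S\ref{ssec:cyldec}, gives a homotopy isomorphism going in the opposite direction. We then compose the chain
\[
A\Rightarrow A_{cw}\Rightarrow|B_\bullet(WP,WP,A_{cw})|\Rightarrow\widetilde{A}_{cw}.
\]
Composition is done by filling the inner $2$-horn over $W(P\otimes_{BV}\Delta^{2})$ (with $\Delta^{2}$ the linear unary operad on $0\to1\to2$) and restricting to the long edge $0\to2$. Via Lemma~\ref{lm:red} this reduces to free operads $\Omega(T)$, where the inner horn fillers exist by Lemma~\ref{lm:exthorn}~$(i)$ together with the inner anodyne part of the cylinder decomposition, Lemma~\ref{lm:fill}~$(i)$. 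The composite on colors is a composite of homotopy equivalences, so the result is a homotopy isomorphism $A\Rightarrow\widetilde{A}_{cw}$.

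The main obstacle is the third step. Fillers must be chosen compatibly over all trees $T\to NP$ so that they assemble through the limit of Lemma~\ref{lm:red}, which amounts to an induction on the number of vertices using the boundary $\partial\Omega[T]$. In addition, the backward weak equivalence $|\eps_*|$ must be inverted: this needs the Whitehead argument, and hence CW-type hypotheses on the bar constructions, before the outer horn filler applies. I would first try to avoid the inversion altogether by applying Theorem~\ref{thm:gen}~$(ii)$ directly with target $|B_\bullet(WP,WP,A_{cw})|$.
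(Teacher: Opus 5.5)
Your outline matches the paper's proof. Theorem~\ref{thm:gen} with $u_c=\Id$ gives $A\Rightarrow A_{cw}$. Then $\widetilde A_{cw}:=\eps_!A_{cw}$ is a strict CW algebra by Corollary~\ref{cor:algrect}. Finally, $A\Rightarrow A_{cw}$ and $A_{cw}\Rightarrow\widetilde A_{cw}$ are composed by filling an inner $2$-horn. The paper simply takes the rectification homotopy isomorphism $A_{cw}\Rightarrow\eps_!A_{cw}$ as supplied by \S\ref{ssec:rectif} and Lemma~\ref{lm:rectzag}. You go further and try to build it from the zigzag, and the argument you give for that step fails.

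First, a labelling slip: in your chain $A\Rightarrow A_{cw}\Rightarrow|B_\bullet(WP,WP,A_{cw})|\Rightarrow\widetilde A_{cw}$, the leg that must be inverted is the augmentation $|B_\bullet(WP,WP,A_{cw})|\to A_{cw}$. The map $|\eps_*|$ already points the right way.

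The real problem is the tool you use for the inversion. Theorem~\ref{thm:gen}~(ii), or equivalently the cylinder decomposition of \S\ref{ssec:cyldec} closed off by the external horn of Lemma~\ref{lm:exthorn}~(ii), does not invert a given morphism between two given homotopy algebras. It builds a \emph{new} homotopy algebra structure on one end. The final external horn of Lemma~\ref{lm:fill}~(ii) is exactly where that end's structure map $\theta_T$ is created. Applied with target $|B_\bullet(WP,WP,A_{cw})|$ and the spaces $A_{cw}(c)$, it yields $A'\Rightarrow|B_\bullet(WP,WP,A_{cw})|$ for some structure $A'$ on those spaces. In general $A'\neq A_{cw}$, so your chain no longer composes. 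The same objection rules out your fallback of applying Theorem~\ref{thm:gen}~(ii) ``directly'' to avoid the inversion. If both ends are prescribed, the last step of the cylinder decomposition becomes a boundary-filling problem for $\Omega[T_N]$, since the missing face is already given. That problem is obstructed in general.

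What you need is to fill the zigzag itself as an outer $\Lambda^2_0$-horn over $W(P\otimes_{BV}\Delta^2)$. Its apex is $|B_\bullet(WP,WP,A_{cw})|$, and its first edge is the colorwise homotopy equivalence to $A_{cw}$. This requires a special outer horn filler, a $\Delta^2$-analogue of Lemma~\ref{lm:exthorn}~(ii) in the Cordier--Porter/Joyal style. The $1\to2$ edge of the filler is the homotopy isomorphism $A_{cw}\Rightarrow\widetilde A_{cw}$, and after that your inner-horn composition goes through as planned.
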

	\begin{proof}
		Theorem~\ref{thm:gen} produces a homotopy isomorphism $A\Rightarrow A_{cw}$ with a CW homotopy algebra $A_{cw}$.
		Fill the $2$-horn consisting of the zigzag between $A_{cw}$ and its rectification $\widetilde{A}_{cw}:=\eps_{!} A_{cw}$ (from Lemma~\ref{lm:rectzag}) to get a homotopy isomorphism $A\Rightarrow \widetilde{A}_{cw}$.
		By corollary~\ref{cor:algrect}, $\widetilde{A}_{cw}$ is a CW algebra.
		This completes the proof.
	\end{proof}
	
	\begin{remark}
		In the case of a unary finite operad (i.e. corresponding to a category with finitely many simplices in the nerve), the respective dendroidal and simplicial nerves coincide and have only finitely many dimensions where non-degenerate simplices occur.
		Then the respective bar construction is homotopy equivalent to its finite skeleton.
		Hence, under these assumptions in Theorem~\ref{thm:cwapp} one obtains finite-dimensional CW complexes $\widetilde{A}_{cw}(c)$, provided that $A(c)$ is a finite CW complex for every $c\in C$.
	\end{remark}
	
	\section{Proof of Theorem~\ref{thmm:preshemb}}\label{sec:thm3}

	\subsection{Branches}\label{ssec:branch}

	We write $E(T)$, $ie(T)$, $L(T)$, $r(T)$ for the edges, inner edges, leaves and root of a
	tree $T$, and $T[e]:=T_{\geqslant e}$ be the \textit{branch} of $T$ at $e\in E(T)$, respectively. 

	\begin{lemma}\label{lm:branches}
		Let $f\colon S\to T$ be a morphism in $\Omega_{C}$.
		\begin{enumerate}[label=(\roman*)]
			\item One has $L(f(S))=f(L(S))$, and $f$ is injective on $L(S)$. 
			In particular, the branches $T[f(l)]$,
			$l\in L(S)$, are pairwise disjoint.
			\item One has $T[f(r(S))]=f(S)\circ\bigl(T[f(l)]\bigr)_{l\in L(S)}$, the grafting being along the
			leaves of $f(S)$.
			\item For any composition $S\xrightarrow{f}U\xrightarrow{g}T$ and any leaf $l\in L(S)$,
			\[
			T[gf(l)]=g\bigl(U[f(l)]\bigr)\circ\bigl(T[g(m)]\bigr)_{m\in L(U[f(l)])}.
			\]
		\end{enumerate}
	\end{lemma}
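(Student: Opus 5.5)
The plan is to reduce everything to the generating morphisms of $\Omega_{C}$ (inner faces, outer faces, degeneracies) and then argue by induction on the length of a factorization. For a morphism $f\colon S\to T$ I read $f(S)\subseteq T$ as the image subtree: the edges $f(E(S))$, together with the vertices of $T$ lying between them. Since $\Omega_{C}$ has the usual epi–mono factorization, I would first write $f=\delta\circ\sigma$ with $\sigma$ a composite of degeneracies and $\delta$ a composite of faces. Degeneracies are bijective on edges once the added vertex is collapsed, and they send leaves to leaves and the root to the root, so they do not affect (i)–(iii). Hence we may assume $f$ is a face map, i.e. an isomorphism of $S$ onto a subtree $f(S)$ of $T$, followed by contraction of some inner edges.

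For (i), check the generators directly. An inner face $\partial_{e}$ does not change the leaf set. An outer face at a leaf vertex $v$ replaces the inputs of $v$ by the edge below $v$; that edge is a leaf of the new face tree and is mapped to itself. In each case leaves go injectively to leaves of the image subtree, and $L(f(S))=f(L(S))$. The disjointness of the branches $T[f(l)]$ then comes from a general fact about rooted trees: two edges neither of which lies above the other have disjoint upward branches. Two distinct leaves of the subtree $f(S)$ are incomparable in $T$, since otherwise the path between them would be contained in $f(S)$, and one of them would not be a leaf of $f(S)$.

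For (ii), note that $f(S)$ is a subtree whose root is $f(r(S))$ and whose leaves are the $f(l)$. Every edge of $T$ above $f(r(S))$ either lies in $f(S)$ or lies above exactly one leaf $f(l)$; this uses convexity of the subtree, namely that a path from $f(r(S))$ upward leaves $f(S)$ only through a leaf. The required identity is then exactly the decomposition of $T[f(r(S))]$ obtained by cutting at the leaves of $f(S)$. Part (iii) follows by applying (ii) to the morphism $g$ restricted to the branch $U[f(l)]$, which is itself a subtree with root $f(l)$, using $g(U[f(l)])\subseteq T$ and the identification $gf(l)=g(f(l))$. One could alternatively note that (iii) is just (ii) for $g|_{U[f(l)]}$.

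The main obstacle I expect is making "image subtree" precise so that it is well-defined and convex. This matters especially for outer faces at the root, and when inner faces are composed with outer faces, since contracted inner edges reappear as vertices of $T$ that lie inside $f(S)$. I would handle this by defining $f(S)$ as the smallest subtree containing $f(E(S))$. I would then prove by induction on the number of generating faces that its edge set is $\{e : f(r(S))\le e\le f(l)\text{ for some } l\}$ and that its leaves are exactly $f(L(S))$.
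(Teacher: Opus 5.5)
\textbf{The gap.} Your fix for the main obstacle fails. You define $f(S)$ as the smallest subtree containing $f(E(S))$, and you plan to prove that its edge set is $\{e : f(r(S))\le e\le f(l)\ \text{for some}\ l\in L(S)\}$. Both break down when nullary vertices (stumps) are present. Stumps are allowed in $\Omega_{C}$ in the Heuts--Moerdijk convention used here, and they occur in the paper: the $\mathcal{O}_C$-trees $\mathcal{G}_v(\sigma)$ can have vertices decorated by the nullary operation $\eta$, and any $P$ with nullary operations produces such $P$-trees.

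Here is a counterexample. Let $T$ consist of one vertex $u$ with root $r$ and inputs $e_1,e_2$, where $e_1$ is a leaf and $e_2$ is capped by a stump. Take $f=\mathrm{id}_T$.
\begin{itemize}
\item A stump is a top vertex, so it is never needed to connect image edges. Your smallest subtree is therefore the corolla at $u$. Its leaves $e_1,e_2$ contradict $L(f(S))=f(L(S))=\{e_1\}$.
\item Your edge-set formula gives $\{r,e_1\}$. So in (ii) the edge $e_2$ would lie neither in $f(S)$ nor in any branch $T[f(l)]$.
\end{itemize}
The induction you outline would thus be proving false statements.

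\textbf{The repair.} Define $f(S)$ through vertices rather than edges. Write $f=\delta\sigma$. The composite of faces $\delta\colon S'\to T$ factors, up to isomorphism, as inner faces $T'/E'\to T'$ followed by the inclusion of an outer face $T'\subseteq T$. Set $f(S):=T'$. For $S\neq\eta$ this is the union of the subtrees $f(C_v)$ over the vertices $v$ of $S$, which is how the paper uses $f(S)$.
\begin{itemize}
\item Part (i) is then immediate: contracting inner edges does not change leaves, and $\sigma$ is bijective on leaves.
\item Replace the edge-set formula by the closure property of an outer face: if $a\in E(f(S))$ is not a leaf of $f(S)$, then the vertex of $T$ directly above $a$ lies in $f(S)$ with all its input edges.
\item For (ii), take $e\in E(T[f(r(S))])$, a path $f(r(S))=e_0<\dots<e_k=e$, and the largest $i$ with $e_i\in E(f(S))$. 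Either $i=k$, or, by closure, $e_i$ is a leaf of $f(S)$. In the latter case $e_i=f(l)$ and $e\in E(T[f(l)])$.
\end{itemize}
Stumps fall under the case $i=k$. This is exactly the paper's proof of (ii).

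\textbf{Comparison with the paper.} With that repair, your outline matches the paper for (ii) and (iii); in both, (iii) is (ii) applied to $g$ restricted to the up-closed branch $U[f(l)]$. For (i) you use the normal form, whereas the paper argues directly on an arbitrary $f$. It notes that a degeneracy identifies two comparable edges, so $f$ is injective on leaves. It then shows a leaf of $f(S)$ is neither an edge created by $f$ nor, up to contracted unary vertices, the image of a non-leaf edge. Your route has the advantage of making $f(S)$ precise, which the paper leaves implicit.

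Two statements should also be corrected:
\begin{itemize}
\item A degeneracy is not bijective on edges. It identifies the two comparable edges adjacent to the deleted vertex, and that comparability is exactly why it is bijective on leaves.
\item A composite of faces does not map $S$ isomorphically onto $f(S)$. Rather, $S\cong f(S)/E'$ as above.
\end{itemize}
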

	\begin{proof}
		$(i)$ A degeneracy identifies only the two edges adjacent to the unary vertex it
		deletes, and these are comparable, so at most one of them is a leaf; faces are
		injective. Hence $f$ is injective on $L(S)$, and, $f$ being order preserving,
		$f(L(S))$ consists of maximal elements of $f(S)$, i.e. $f(L(S))\subseteq L(f(S))$.
		
		Conversely, let $a\in L(f(S))$. If $a$ is created by $f$, say
		$a\in ie\bigl(f(C_v)\bigr)$ for some $v\in iv(S)$, then an
		input edge of $f(C_v)$ lies strictly above $a$, contradicting maximality.
		Hence, $a=f(x)$ for some $x\in E(S)$. Assume $x\notin L(S)$ and let $v$ be the vertex of $S$
		above $x$. 
		If $f(C_v)\neq\eta$, it has an edge strictly above $f(x)=a$, again
		contradicting maximality. 
		Otherwise, $f$ deletes the unary vertex $v$ and identifies $x$ with its input edge $y$, so that $a=f(y)$ and $x$ may be replaced by $y$. 
		As $S$ is finite, iterating gives $a\in f(L(S))$.
		Two distinct leaves of $f(S)$ are incomparable in $f(S)$, hence in $T$, and no edge lies above two incomparable
		edges; this gives the disjointness.
		
		$(ii)$ The inclusion $\supseteq$ holds, because $f$ preserves the order. 
		Let $e\in E(T[f(r(S))])$ and choose
		a path $f(r(S))=e_0<\dots<e_k=e$. Let $i$ be maximal with $e_i\in E(f(S))$, which exists since
		$f(r(S))\in E(f(S))$. 
		If $i=k$, then $e\in E(f(S))$. 
		Otherwise, $e_{i+1}\notin E(f(S))$.
		We claim that $e_i$ is a leaf of $f(S)$: indeed, otherwise the vertex of $T$ above $e_i$ lies in $f(S)$ together with all its input edges, which contradicts maximality.
		Hence, $e_{i+1}\in E(f(S))$. 
		By $(i)$, one has $e_i=f(l)$ for some $l\in L(S)$, and
		$e\geqslant e_{i+1}>f(l)$.
		
		$(iii)$ The branch $U[f(l)]$ is up-closed, so $g$ restricts on it to a morphism $U[f(l)]\to T$ with root
		$g(f(l))$.
		Now apply $(ii)$ to this restriction.
		This finishes the proof.
	\end{proof}
		
	\subsection{Category $\underline{\Omega} P$}\label{ssec:resop}
	
	We introduce the indexing category in order to define presheaves that recover
	homotopy algebras. 
	The higher homotopies $\varphi_{\mathcal{T}}$ of a $WP$-algebra are parameterized by the lengths of the inner edges of $\mathcal{T}$, the facets $t_e=0$ and $t_e=1$
	corresponding to the contraction of $e$ and to a decomposition at $e$ respectively.
	A leaf carries the length $1$, the value it acquires as soon as a branch is grafted onto it;
	accordingly we let a metric be defined on all non-root edges and be equal to $1$ on the leaves.
	(By definition, leaves have no root, and the root does not receive length.)
	We use the convention
	$\max\varnothing:=0$.
	For $f\colon S\to T$ in $\Omega_{C}$ let
	\[
		E_f(T):=\{a\in E(T)\colon \ a>f(l)\ \text{for some}\ l\in L(S)\}
	\]
	be the set of edges lying strictly above the image of a leaf; by lemma \ref{lm:branches} (i) the leaf
	$l$ is then unique. Let $M_f$ be the space of maps $\mu\colon E_f(T)\to I=[0,1]$ with $\mu\equiv 1$ on
	$L(T)\cap E_f(T)$. Restricting to inner edges identifies
	\[
		M_f\cong\ 
		\prod_{l\in L(S)} I^{ie(T[f(l)])}.
	\]
	Finally, for $\mu\in M_f$ and $a\in E(T')$, where $g\colon T\to T'$, put
	\[
		(g_*\mu)(a):=\max\bigl\lb\mu(e')\colon 	e'\in E_f(T),\ g(e')=a\bigr\rb.
	\]
	
	\begin{definition}\label{def:thick}
		The category $\underline{\Omega} P$ has the same objects as $\Omega P$, and
		\[
			 \underline{\Omega}P(\mathcal{S},\mathcal{T}) := \coprod_{f\in\Omega P(\mathcal{S},\mathcal{T})} M_{f}.
		\]
		The composition of 
		$
		\begin{tikzcd}
		\mathcal{S} \arrow{r}{(f,\mu)} &
		\mathcal{U} \arrow{r}{(g,\nu)} &
		\mathcal{T}
		\end{tikzcd}
		$ is defined by
		\[
			(g,\nu)\circ(f,\mu):=(gf,\nu*\mu),
		\]
		where for $a\in E_{gf}(T)$, lying above $gf(l)$ with
		$l\in L(S)$, 
		\begin{equation}\label{eq:comp}
			(\nu*\mu)(a):=
			\begin{cases}
				(g_*\mu)(a), & a\in E(g\bigl(U[f(l)]\bigr)),\\
				\nu(a), & \text{otherwise.}
			\end{cases}
		\end{equation}
	\end{definition}
	
	In the second case of~\eqref{eq:comp}, $a$ lies strictly above a leaf $g(m)$ of the respective tree with $m\in L(U)$, so that $\nu(a)$ is defined. 
	Further, $\nu*\mu\in M_{gf}$: a leaf of $T$ in the second case has $\nu(a)=1$, and a leaf of $T$ in the first case is a leaf of $g\bigl(U[f(l)])$, hence of the form $g(m)$ with $m\in L(U[f(l)])$ by Lemma~\ref{lm:branches} $(i)$, so that $\mu(m)=1$ occurs in $(g_*\mu)(a)$. 
	Since $\max$ is continuous, the
	category $\underline{\Omega} P$ is topologically enriched.
	
	\begin{remark}\label{rem:types}
		We explain \eqref{eq:comp}, which is forced by the defining relations of the $W$-construction, in
		terms of the edges of $g\bigl(U[f(l)])$ (see figure~\ref{fig:multtree}).
		\begin{enumerate}[label=(\roman*)]
			\item an edge $g(e)$, $e\in ie(U[f(l)])$, has as its preimages the inner edges mapping
			onto it, and the deletion of a degenerate vertex on an edge amounts to taking the maximum of the two
			lengths
			(if $g$ is injective, then the maximum is taken over a single edge, and is the identity map);
			\item an edge of the expansion $ie(g(C_w))$, $w\in iv(U[f(l)])$, is created by
			$g$ and has no preimage, so it receives the length $\max\varnothing=0$, i.e. the contraction at that
			edge, see figure~\ref{fig:inner_face_map};
			\item a grafting edge $g(m)$, $m\in L(U[f(l)])$, has the leaf $m$ among its preimages, whence
			the length $\mu(m)=1$; 
			\item the second case of~\eqref{eq:comp} leaves the lengths unchanged along the inner edges of
			the grafted branches $T[g(m)]$.
		\end{enumerate}
		Types $(i)$ and $(iii)$ meet only when $g$ is a degeneracy identifying an inner edge with the image of a
		leaf; type $(ii)$ is disjoint from both.
	\end{remark}

	\begin{figure}
	\centering
	\begin{tikzpicture}[
		scale=1.15,
		edge/.style   ={thick},
		k1/.style     ={thick, red!75!black},
		k2/.style     ={thick, blue!70!black, dashed},
		k3/.style     ={thick, green!45!black, double, double distance=1pt},
		k4/.style     ={thick, orange!85!black, dotted},
		vert/.style   ={circle, fill=black, inner sep=1.3pt},
		lbl/.style    ={font=\scriptsize},
		reg/.style    ={rounded corners, draw=gray!55, dash pattern=on 2pt off 2pt}
		]
		
		\begin{scope}[shift={(-4.6,0)}]
			\node[lbl] at (0,-1.35) {$\mathcal U[f(l)]$};
			
			\draw[edge] (0,-1) -- (0,-0.3);
			\node[lbl, left] at (0,-0.65) {$f(l)$};
			
			\node[vert] (w1) at (0,-0.3) {};
			\node[lbl, right=1pt] at (w1) {$w_1$};
			
			\draw[k1] (w1) -- (-0.55,0.55);
			\node[lbl, left] at (-0.42,0.15) {$e$};
			
			\node[vert] (w2) at (-0.55,0.55) {};
			\node[lbl, left=1pt] at (w2) {$w_2$};
			
			\draw[edge] (w2) -- (-1.0,1.35);
			\draw[edge] (w2) -- (-0.15,1.35);
			\draw[edge] (w1) -- (0.75,1.35);
			\node[lbl] at (-1.0,1.55) {$m_1$};
			\node[lbl] at (-0.15,1.55) {$m_2$};
			\node[lbl] at (0.75,1.55) {$m_3$};
		\end{scope}
		
		\draw[->, thick] (-2.9,0.2) -- (-2.0,0.2) node[midway, above, lbl] {$g$};
		
		\begin{scope}
			\node[lbl] at (0,-1.35) {$\mathcal{T}[ gf(l)]$};
			
			\draw[reg] (-1.55,-0.5) rectangle (1.5,1.15);
			\node[lbl, gray!60!black] at (1.72,-0.28)
			{$g\bigl(\mathcal{U}[ f(l)]\bigr)$};
			
			\draw[edge] (0,-1) -- (0,-0.3);
			\node[lbl, left] at (0,-0.62) {$gf(l)$};
			
			\node[vert] (a1) at (0,-0.3) {};
			\draw[k2] (a1) -- (-0.35,0.25);           
			\node[vert] (a2) at (-0.35,0.25) {};
			
			\draw[k1] (a2) -- (-0.95,0.85);
			\node[lbl, left] at (-0.78,0.5) {$g(e)$};
			\node[vert] (b1) at (-0.95,0.85) {};
			
			\draw[edge] (b1) -- (-1.45,1.75);
			\node[lbl] at (-1.62,1.9) {$g(m_1)$};
			
			\draw[k3] (b1) -- (-0.45,1.6);
			\node[lbl, right=1pt] at (-0.5,1.28) {$g(m_2)$};
			\node[vert] (c1) at (-0.45,1.6) {};
			\draw[k4] (c1) -- (-0.15,2.2);            
			\node[vert] (c2) at (-0.15,2.2) {};
			\draw[edge] (c2) -- (-0.5,2.8);
			\draw[edge] (c2) -- (0.2,2.8);
			\draw[edge] (c1) -- (-1.05,2.35);
			\node[lbl, gray!60!black] at (0.62,2.45) {$\mathcal{T}[ g(m_2)]$};
			
			\draw[k3] (a1) -- (0.95,0.85);
			\node[lbl, right=1pt] at (0.88,0.45) {$g(m_3)$};
			\node[vert] (d1) at (0.95,0.85) {};
			\draw[edge] (d1) -- (0.55,1.6);
			\draw[edge] (d1) -- (1.35,1.6);
			\node[lbl, gray!60!black] at (1.75,1.35) {$\mathcal{T}[ g(m_3)]$};
		\end{scope}
		
		\begin{scope}[shift={(3.6,1.9)}]
			\draw[k1] (0,0)    -- (0.6,0);    \node[lbl, right] at (0.72,0)
			{$(i)$ $g(e)$, \ $e\in ie(\mathcal{U}[ f(l)])$};
			\draw[k2] (0,-0.42) -- (0.6,-0.42); \node[lbl, right] at (0.72,-0.42)
			{$(ii)$ $ie(g(\mathcal{C}_w))$ \ (expansion)};
			\draw[k3] (0,-0.84) -- (0.6,-0.84); \node[lbl, right] at (0.72,-0.84)
			{$(iii)$ $g(m)$ \ (grafting edge)};
			\draw[k4] (0,-1.26) -- (0.6,-1.26); \node[lbl, right] at (0.72,-1.26)
			{$(iv)$ $ie(\mathcal T[g(m)])$};
		\end{scope}
	\end{tikzpicture}
	\caption{Four types of edges defined by a morphism in $\Omega P$.
	Here, $g$ is a composition of an embedding (several outer face maps) and an inner face placing a degenerate vertex on one of the leaves for the root vertex. The $P$-decorations of vertices are irrelevant for~\eqref{eq:comp} and are omitted for brevity }\label{fig:multtree}
	\end{figure}

	Let $E^{+}(T):=E(T)\setminus\lb r(T)\rb$.

	\begin{lemma}[Associativity]\label{lm:assoc}
		For any composition $\mathcal S\xrightarrow{(f_1,\mu_1)}\mathcal U_1\xrightarrow{(f_2,\mu_2)}\mathcal U_2
		\xrightarrow{(f_3,\mu_3)}\mathcal U_3$ in $\underline\Omega P$ one has
		\begin{equation}\label{eq:assocuop}
			(\mu_3*\mu_2)*\mu_1=\mu_3*(\mu_2*\mu_1).
		\end{equation}
	\end{lemma}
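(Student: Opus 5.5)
Both sides of \eqref{eq:assocuop} are functions on $E_{f_3f_2f_1}(U_3)$, so I will fix an edge $a\in E_{f_3f_2f_1}(U_3)$ and compare the two values. By Lemma~\ref{lm:branches}~$(i)$ there is a unique leaf $l\in L(S)$ with $a>f_3f_2f_1(l)$. The branch $U_3[f_3f_2f_1(l)]$ has two decompositions. Lemma~\ref{lm:branches}~$(iii)$, applied to $f_1$ followed by $f_3f_2$, splits it into the core $f_3f_2(U_1[f_1(l)])$ and the grafted branches $U_3[f_3f_2(m)]$, $m\in L(U_1[f_1(l)])$. The same lemma applied in two stages, first to $f_1$ then $f_2$ and next to $f_2$ then $f_3$, refines this into three layers: the core $K_1:=f_3f_2(U_1[f_1(l)])$; the middle layer $K_2:=\bigcup_m f_3\bigl(U_2[f_2(m)]\bigr)\setminus K_1$; and the outer layer made of the branches $U_3[f_3(n)]$ with $n$ a leaf of $U_2[f_2f_1(l)]$ that is not in $f_2(U_1[f_1(l)])$. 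I will split into three cases according to the layer containing $a$, handling overlaps on boundary (grafting) edges separately.

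In the outer case both sides equal $\mu_3(a)$: on the left, $a$ is outside $E(f_3(U_2[f_2f_1(l)]))$, so $\mu_3$ is used. On the right, the outer case for $\mu_3*(\mu_2*\mu_1)$ also applies, because $f_3(U_2[(f_2f_1)(l)])\supseteq f_3f_2(U_1[f_1(l)])$, and the reverse containment is checked using Lemma~\ref{lm:branches}~$(ii)$. In the middle case the left side is $(f_{3*}(\mu_2*\mu_1))(a)$ computed from the second clause, which gives $\max\{\mu_2(e')\colon f_3(e')=a\}$. The right side is $(\mu_3*\mu_2)(a)$ in its first clause, which is $(f_{3*}\mu_2)(a)$. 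These agree once I show that no preimage $e'$ lies in the core $f_2(U_1[f_1(l)])$. In the core case the left side is $(f_{3*}f_{2*}\mu_1)(a)$ after unwinding, and the right side is $((f_3f_2)_*\mu_1)(a)$. So I need functoriality of pushforward, $(f_3f_2)_*\mu=f_{3*}(f_{2*}\mu)$. This holds because a max over preimages under a composite equals an iterated max over fibres, with the convention $\max\varnothing=0$ consistent with edges created by a face map.

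The main obstacle is the boundary edges, namely images of leaves where the layers meet and where types $(i)$ and $(iii)$ of Remark~\ref{rem:types} collide. This happens when a degeneracy in $f_3$ or $f_2$ identifies an inner edge of the core with a grafting edge, so a single $a$ has preimages in two layers and the case distinctions in \eqref{eq:comp} could disagree between the two bracketings. My first approach is to observe that any such preimage which is a leaf contributes the value $1$ (leaves carry $\mu\equiv1$). Since all values lie in $[0,1]$, both sides then equal $1$ whenever a leaf of an intermediate branch lies in the fibre of $a$. I would make this precise by showing that $a$ is classified as a core edge by one bracketing exactly when a leaf-image lies in its fibre for the other. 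To simplify, I would reduce to $f_2,f_3$ being generating maps (faces or degeneracies), since morphisms in $\Omega P$ factor into these. That reduction requires knowing that $*$ is already well-defined, which is true, so only this finite list of local configurations needs checking.
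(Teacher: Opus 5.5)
Your three layers are exactly the paper's cases. With $A:=U_1[f_1(l)]$, $V:=f_2(A)$, $W:=f_3(V)$ and $W':=f_3(U_2[f_2f_1(l)])$, the paper distinguishes $a\in E(W)$, $a\in E(W')\setminus E(W)$ and $a\notin E(W')$. Your outer and middle cases are essentially its cases (iii) and (ii), but three small repairs are needed there:
\begin{enumerate}[label=(\alph*)]
\item Your labels of the two sides are mostly swapped: $(f_{3*}(\mu_2*\mu_1))(a)$ is the value of $\mu_3*(\mu_2*\mu_1)$.
\item The outer case should simply be $a\notin E(W')$. Excluding leaves $n\in f_2(A)$ loses the edges strictly above $f_3f_2(m)$ when $f_2(m)$ is a leaf of $U_2$.
\item In the middle case you must also show that every $b\in E_{f_2}(U_2)$ with $f_3(b)=a$ lies above $f_2(k)$ for some $k\in L(A)$, so that the two maxima run over the same index set.
\end{enumerate}

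The genuine gap is the core case $a\in E(W)$, which carries the actual content of the lemma. There $(\mu_3*(\mu_2*\mu_1))(a)$ is the maximum of $(\mu_2*\mu_1)(b)$ over all $b\in E_{f_2f_1}(U_2)$ with $f_3(b)=a$. It does not unwind to $(f_{3*}f_{2*}\mu_1)(a)$, because any such $b\notin E(V)$ contributes $\mu_2(b)$, which is arbitrary. You see this obstacle, but the fact that removes it is missing:
\begin{itemize}
\item By Lemma~\ref{lm:branches}~$(ii)$ such a $b$ lies strictly above $f_2(l')$ for some $l'\in L(A)$, so $a=f_3(b)\geqslant f_3f_2(l')$.
\item $f_3f_2(l')$ is a leaf of $W$ (Lemma~\ref{lm:branches}~$(i)$), hence maximal in $W$. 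Since $a\in E(W)$, this forces $a=f_3f_2(l')$.
\item Then $\mu_1(l')=1$ enters $((f_3f_2)_*\mu_1)(a)$, and $(\mu_2*\mu_1)(f_2(l'))=1$ enters the other side. Both sides therefore equal $1$.
\end{itemize}
The offending $b$ is not itself a leaf, and both bracketings classify $a$ as a core edge. So the statement you propose to ``make precise'' is not the one needed.

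Your fallback of reducing to $f_2,f_3$ being generators does not work. For every face or degeneracy $g$, the branches at images of leaves are trivial trees or corollas, so $M_g$ is a single point. Composites of such lifts therefore only yield $\{0,1\}$-valued metrics. A morphism whose metric takes a value in $(0,1)$ (e.g.\ an interior point of the cube $I^{ie(T)}\subseteq\underline{\Omega}P(\eta_{r(T)},\mathcal{T})$) is not a composite of lifts of generators. Associativity for generator triples thus does not propagate to general morphisms. The direct maximality argument above is what closes the proof, and it needs no reduction.
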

	\begin{proof}
		The underlying tree morphisms agree, both being $f:=f_3f_2f_1$. 
		Fix $l\in L(S)$ and put
		\[
			A:=U_1[f_1(l)],\qquad V:=f_2(A),\qquad W:=f_3(V)=(f_3f_2)(A),\qquad W':=f_3\bigl(U_2[f_2f_1(l)]\bigr).
		\]
		We compare the values of two sides for~\eqref{eq:assocuop} on any $a\in E^{+}\bigl(U_3[f(l)]\bigr)$. 
		Consider three cases.
		
		\begin{enumerate}[label=(\roman*)]
		\item \textbf{$a\in E(W)$.} 
		Both sides of~\eqref{eq:assocuop} on $a$ are then equal $\bigl((f_3f_2)_*\mu_1\bigr)(a)$, the left one by
		\eqref{eq:comp} for $(f_3f_2,f_1)$. The right one equals
		\[
			\max\lb(\mu_2*\mu_1)(b)\colon b\in E_{f_2f_1}(U_2),\ f_3(b)=a\rb.
		\] 
		If $b\in E(V)$, then
		$(\mu_2*\mu_1)(b)=\bigl((f_2)_*\mu_1\bigr)(b)$, and these terms give $\bigl((f_3f_2)_*\mu_1\bigr)(a)$
		because $(f_3f_2)^{-1}(a)=\bigcup_{b\in f_3^{-1}(a)}f_2^{-1}(b)$. Otherwise $b$ lies strictly above a
		leaf $f_2(l')$ of $V$; since $a=f_3(b)$ lies in $W$ above the leaf $(f_3f_2)(l')$ of $W$, and the
		leaves of $W$ are maximal in $W$, one has $a=(f_3f_2)(l')$. Then $\mu_1(l')=1$ occurs on the left and
		$(\mu_2*\mu_1)(f_2(l'))=1$ on the right, so both sides equal $1$, all lengths being $\leqslant1$.
		
		\item \textbf{$a\in E(W')\setminus E(W)$.} 
		Both sides of~\eqref{eq:assocuop} on $a$ are equal to
		\[
			\max\lb\mu_2(b)\colon b\in E^{+}(U_2[f_2(l')]),\ f_3(b)=a\rb.
		\]
		Let $b\in E_{f_2f_1}(U_2)$ with $f_3(b)=a$. Then $b\notin E(V)$, since otherwise
		$a=f_3(b)\in E(W)$; so by the second case of (2) for $(f_2,f_1)$ the edge $b$ lies
		strictly above a leaf $f_2(l'')$ of $V$, with $l''\in L(A)$, and $(\mu_2*\mu_1)(b)=\mu_2(b)$.
		Both $(f_3f_2)(l')$ and $(f_3f_2)(l'')$ lie below $a$, hence are comparable; being leaves of $W$ they are maximal in $W$, so they coincide, and $l''=l'$ by Lemma~\ref{lm:branches} $(i)$. 
		This gives the right-hand side value. 
		For the left-hand side, $a\in E(W')\setminus E(W)$ forces
		$a\in f_3\bigl(U_2[f_2(l')]\bigr)$, so $(\mu_3*\mu_2)(a)=\bigl((f_3)_*\mu_2\bigr)(a)$, the
		maximum being taken over $\lb b\in E_{f_2}(U_2)\colon f_3(b)=a\rb$. 
		Any such $b$ lies strictly above
		$f_2(k)$ for a unique $k\in L(U_1)$, and $k\in L(A)$: otherwise $f_2(k)$ would lie in the branch $U_2[f_2f_1(l_0)]$ of some leaf $l_0\ne l$ of $S$, and $a=f_3(b)$ would lie in
		$U_3[f(l_0)]$, which is disjoint from $U_3[f(l)]\ni a$ by ~\ref{lm:branches} $(i)$. 
		A similar argument on maximality and comparability from the above then gives $k=l'$, so the two index sets
		\[
			\lb b\in E_{f_2}(U_2)\colon f_3(b)=a\rb,\ 
			\lb b\in E_{f_2 f_{1}}(U_2)\colon f_3(b)=a\rb,
		\]
		coincide.
		
		\item \textbf{$a\notin E(W')$.} Then $a$ lies strictly above a leaf of $W'$, so the right-hand side of~\eqref{eq:assocuop} on $a$ equals
		$\mu_3(a)$. 
		Also $a\notin f_3\bigl(U_2[f_2(l')]\bigr)$, this tree being contained in $W'$, so $a$
		lies strictly above a leaf of it and $(\mu_3*\mu_2)(a)=\mu_3(a)$ by~\eqref{eq:comp} for $(f_3,f_2)$.
		\end{enumerate}
	\end{proof}

	We record several important properties of the category $\underline{\Omega} P$ in the following lemma.
	For $x\in E(T)$ let $\iota_{\mathcal{T},x}\colon\eta_{c(x)}\to\mathcal{T}$ be the inclusion of
	the edge $x$, and put $\iota_{\mathcal{T}}:=\iota_{\mathcal{T},r(T)}$.
	There is the projection functor $\pi\colon \underline{\Omega} P\to\Omega P$, $(f,\mu)\mapsto f$.
	
	\begin{lemma}\label{lm:leafcomp}
		For any $(f,\mu)\in\underline{\Omega} P(\mathcal{S},\mathcal{T})$ and $l\in L(S)$, $(\mathcal{T}[f(l)],\mu_{l})$ is an element of the operad $WP$ (i.e. a  $P$-tree with metric on its inner edges), and
		\[
			(f,\mu)\circ(\iota_{\mathcal{S},l},\varnothing)=(\iota_{\mathcal{T},f(l)},\mu_{l})
		\]
		holds, denoting by
		$\mu_{l}$ the restriction of $\mu$ to $E^{+}(T[f(l)])$. 
	\end{lemma}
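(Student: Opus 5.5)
The plan is to unwind Definition~\ref{def:thick} directly; the lemma makes two assertions and I treat them in turn.

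\emph{First assertion.} By Lemma~\ref{lm:branches}~$(i)$ the branch $T[f(l)]$ is an up-closed subtree of $T$ with root $f(l)$. The $P$-decoration of $\mathcal{T}$ restricts to it, so $\mathcal{T}[f(l)]$ is a $P$-tree with leaves $L(T)\cap E(T[f(l)])$. The set $E^{+}(T[f(l)])$ consists exactly of the edges strictly above $f(l)$, and these lie in $E_f(T)$. Hence $\mu_l$ is defined. Since $\mu\equiv 1$ on leaves, $\mu_l$ is just a metric on $ie(T[f(l)])$, extended by $1$ on the leaves. Through the identification $M_f\cong\prod_{l}I^{ie(T[f(l)])}$, this is the $l$-th factor, so $(\mathcal{T}[f(l)],\mu_l)$ is a metric $P$-tree, i.e.\ a point of $WP$.

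\emph{Second assertion.} The composite $(f,\mu)\circ(\iota_{\mathcal S,l},\varnothing)$ has underlying morphism $f\iota_{\mathcal S,l}=\iota_{\mathcal T,f(l)}$, since the edge $l$ is sent to $f(l)$. Its source is $\eta$, whose unique leaf is $l$, so $E_{f\iota}(T)$ is the set of edges strictly above $f(l)$, namely $E^{+}(T[f(l)])$. The labels therefore live in the same space, and it remains to compute $\mu*\varnothing$ on this set using \eqref{eq:comp} with $g=f$ and $U=S$.

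The branch $U[\iota(l)]$ is $S[l]=\eta_l$. Its image $f(\eta_l)$ is the single edge $f(l)$, which is not strictly above $f(l)$. So no edge $a\in E^{+}(T[f(l)])$ falls in the first case of \eqref{eq:comp}, and every such $a$ falls in the ``otherwise'' case, giving $(\mu*\varnothing)(a)=\mu(a)$. This yields $\mu*\varnothing=\mu_l$.

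The only step needing care is this boundary case. I must check that the first case of \eqref{eq:comp} really involves only the edge $f(l)$ itself. That edge lies outside $E_{f\iota}(T)$, so the vacuous $\max\varnothing=0$ convention never enters. I must also check that the ``otherwise'' value $\nu(a)$ is well defined, which holds because $a>f(l)$ with $l\in L(S)$ places $a$ in $E_f(T)$.
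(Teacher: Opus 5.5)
Your proposal is correct and follows the paper's proof essentially verbatim. You identify the underlying morphism as $\iota_{\mathcal{T},f(l)}$ and note that $E_{f\iota_{\mathcal{S},l}}(T)=E^{+}(T[f(l)])$. You then observe that, since $f(S[l])=\eta_{f(l)}$, the first case of \eqref{eq:comp} is vacuous, so every edge receives the value $\mu(a)$. (One minor point: up-closedness of $T[f(l)]$ holds by the definition of a branch, not by Lemma~\ref{lm:branches}~$(i)$, but this does not affect the argument.)
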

	\begin{proof}
		The first claim holds trivially, since $\mu_{l}$ assigns a length to every inner edge of the $P$-tree $\mathcal{T}[f(l)]$. 
		For the second claim, the underlying tree morphisms agree, both being the inclusion of the edge $f(l)$.
		Since $S[l]=\eta_l$, one has $E_{\iota_{\mathcal{S},l}}(S)=\varnothing$ and $f(S[l])=\eta_{f(l)}$, so that in~\eqref{eq:comp} the first case is empty and every edge of
		$E_{f\iota_{\mathcal{S},l}}(T)=E(T[f(l)])\setminus\{f(l)\}$ falls into the second case, receiving the length $\mu(a)$.
	\end{proof}

	\begin{example}\label{ex:two-vertex}
		Consider the $1$-colored operad $\Ass$, i.e. the operad on one binary operation $m$ satisfying the associativity condition. 
		Let $\mathcal T$ be the two-vertex $\Ass$-tree, with $m$ at the root vertex $u$ and $m$
		grafted at its first input (vertex $v$), so that $ie(T)=\lb e\rb$.
		There is the component $I^{\lb e\rb}$ of the disjoint union $\underline{\Omega} \Ass(\eta_{r(T)},\mathcal{T})$, and the two endpoints of the segment are
		exactly the composites of non-identities in $\underline\Omega \Ass$:
		\begin{itemize}
			\item $t_e=0$: consider
			\[
			\begin{tikzcd}
			\eta_{r} \arrow{r}{(\iota_{\mathcal{T}/e},\varnothing)} &
			\mathcal{T}/e
			\arrow{r}{(\partial_e,\varnothing)} &\mathcal{T}.
			\end{tikzcd}
			\] 
			Here $e$ is created by
			$\partial_e$ expanding the unique vertex, hence is of type $(ii)$ and receives
			length $0$.
			\item $t_e=1$: consider
			\[
			\begin{tikzcd} \eta_{r}\arrow{r}{(\iota_{\mathcal{C}_{u}},\varnothing)} & 
			C_{u} \arrow{r}{(\partial_{v},\varnothing)} &
			\mathcal{T}.
			\end{tikzcd}
			\] 
			Here $e$ is a grafting edge, hence of type $(iii)$ and receives length $1$.
		\end{itemize}
	\end{example}
			
	\subsection{Nerve for algebras over operads}\label{ssec:dneralg}
	
	Let $P$ be an operad with unit in sets.
	The nerve of an algebra over operad was given in~\cite[\S9.5]{he-mo-22} as the embedding (taking the enriched version in topological spaces) 
	\[
		N(P,-)\colon \Alg(P,\Top)\to 	\Top^{\Omega^{op}_{C}} / NP.
	\]
	We define the functor $N_{P}$ in terms of
	Boardman's cherry trees notion~\cite{bo-vo-73}.
	The cherry tree in figure~\ref{fig:chtree} illustrates an element of the free $P$-algebra generated by the $C$-colored object $\{ A(c) \}_{c \in C}$, where $a_i \in A(c_i)$.
	We show that $N(P,-)$ and $N_{P}$ are isomorphic in Proposition~\ref{pr:algneriso}.
	In the next definition, as usual in dendroidal sets theory, the colors $c_i$ are taken to be the leaves themselves.
		
	\begin{figure}
		\begin{center}
			\begin{tikzpicture}[scale=0.5, every node/.style={font=\small}]
				
				\node[draw, circle, thick, minimum width=1mm, minimum height=1mm] (a1) at (-3,2.4) {$a_1$};
				\node[draw, circle, thick, minimum width=1mm, minimum height=1mm] (a2) at (-0.8,2.4) {$a_2$};
				\node at (0.7,2.4) {$\cdots$};
				\node[draw, circle, thick, minimum width=1mm, minimum height=1mm] (an) at (2.2,2.4) {$a_n$};
				
				\coordinate (v) at (0,0);
				\fill (v) circle (5pt);
				\node at (0.42,-0.22) {$p$};
				
				\draw[thick] (a1.south) -- (v); 
				\draw[thick] (a2.south)  -- (v);
				\draw[thick] (an.south) -- (v); 
				\draw[thick] (v) -- (0,-1.5);
				
				\node at (-1.5,0.5) {$c_1$};
				\node at (-0.10,1.1) {$c_2$};
				\node at (1.4,0.5) {$c_n$};
				\node at (0.3,-1) {$c$};
				
			\end{tikzpicture}    
		\end{center}
		\caption{A cherry tree (corolla) with $p\in P(c_1, \dots, c_n;c)$ and $a_{i}\in A(c_{i})$}\label{fig:chtree}
	\end{figure}
	
	\begin{definition}\label{def:denn}
		Define the \textit{dendroidal nerve for $P$-algebras} functor
		\[
			N_{P}\colon \Alg(P,\Top)\to \Top^{(\Omega P)^{op}},
		\]
		on a $P$-algebra $A$ by mapping any $P$-tree $\mathcal{T}=(T,\psi_{T})$ to the Cartesian product for decorations of the respective $C$-colored leaves by $A(c)$:
		\begin{equation}\label{eq:algner}
			N_{P}A(\mathcal{T}):=
			\prod_{l\in l(T)} A(l).
		\end{equation}
	\end{definition}
	
	The $P$-algebra structure on $A$ is recovered from the values of the presheaf $N_{P}A$ on external face maps.
	E.g. inclusion of the root $\eta_{c}$ to a $P$-decorated corolla $P(c_{1},\dots,c_{n};c)$ corresponds in $\Top^{(\Omega P)^{op}}$ to the map of the labels
	\[
		P(c_{1},\dots,c_{n};c)\times \prod_{i=1}^{n} A(c_{i})\to A(c).
	\]
	Because the value of $N_P A(\mathcal{T})$ depends only on the leaves of the tree $T$, the presheaf $N_{P} A$ maps an inner face map (which expands an inner edge and does not change the respective leaves) to the identity maps. 
	This enforces the associativity of the algebra action with respect to the operadic composition in $P$. 
	The values on degeneracies (which insert unary identity vertices) enforce the unit axioms of $A$.

	\begin{remark}
	For a $P$-tree $\mathcal{T}=(T,\psi_{T})$, let $P(\mathcal{T})$ be the suboperad of $P$ generated by all $P$-corollas in $\mathcal{T}$.
	The $P$-tree $\mathcal{T}$ corresponds to the representable object $\Omega[T] \xrightarrow{\psi_{T}} NP$. 
	Let $F_{P}(\mathcal{T})$ be the free $P(\mathcal{T})$-algebra generated by the leaves of $T$. 
	There is the adjunction~\cite[\S9.5, p.~394]{he-mo-22}
	\[
		F_{P}\colon \dTop/NP \leftrightarrows 	\Alg(P, \mathbf{Top})\rcolon N(P, -).
	\]
	By evaluating this free-forgetful adjunction, one obtains the natural isomorphism:
	\[
		\Hm_{\Alg(P(\mathcal{T}), \Top)}(F_{P}(\mathcal{T}), A) \cong \prod_{l\in L(T)} A(l),
	\]
	i.e., the operations at the internal vertices are completely determined by the $P$-decoration $\psi_{T}$, leaving only the inputs of the algebra $A$ at the leaves as the free variables.
\end{remark}

	\begin{definition}\label{def:segal}
		A presheaf $X\in\Top^{(\Omega P)^{op}}$ satisfies the strict \emph{Segal condition for algebras} if 
		the natural map induced by inclusions of leaf edges $\eta_{l}\to T$ 
		is an isomorphism for every $\mathcal{T}\in \Omega P$:
			\[
				X(\mathcal{T})\overset{\cong}{\to}\prod_{l\in L(T)} X(\eta_{l}).
			\]
		The leaf Segal condition for presheaves over $\underline{\Omega} P$ is defined similarly to the above.
	\end{definition}
	
	\begin{remark}\label{rem:simseg}
	In contrast with the usual dendroidal Segal condition for operads, where a tree is reconstructed from its vertex corollas, the Segal condition for a $P$-algebra $A$ implies that the value on a $P$-tree depends only on the respective leaves. 
	\end{remark}
	
	Denote by $\Top^{(\Omega P)^{op}}_{strict}$ the subcategory of strict Segal presheaves in $\Top^{(\Omega P)^{op}}$.
	
	\begin{proposition}\label{pr:dalg}
		For any operad $P$ in sets the dendroidal nerve functor is an isomorphism of categories
		\[
			N_{P}\colon \Alg(P,\Top)\to \Top^{(\Omega P)^{op}}_{strict}.
		\]
	\end{proposition}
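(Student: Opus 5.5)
To prove Proposition~\ref{pr:dalg}, we first make the functor $N_{P}$ precise on morphisms of $\Omega P$ and then construct an explicit inverse on strict Segal presheaves. By definition $N_{P}A(\mathcal{T})=\prod_{l\in L(T)}A(l)$. For a morphism $f\colon\mathcal{S}\to\mathcal{T}$, Lemma~\ref{lm:branches}~$(i)$,$(ii)$ shows that $f$ is injective on leaves and that each branch $T[f(l)]$, $l\in L(S)$, is a $P$-tree with root colour $c(l)$. Define
\[
N_{P}A(f)\colon \prod_{m\in L(T)}A(m)\to\prod_{l\in L(S)}A(l)
\]
by evaluating, for each $l$, the composite operation $\psi_{T}(T[f(l)])\in P$ (the operadic composite of the decorations of the branch, with a unit if the branch is $\eta$) on the coordinates indexed by the leaves of $T[f(l)]$. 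Functoriality follows from Lemma~\ref{lm:branches}~$(iii)$ together with the associativity and unit axioms for $A$: the branch of $gf(l)$ decomposes as $g(U[f(l)])$ grafted with the branches $T[g(m)]$, so evaluating it is the same as first evaluating the $T[g(m)]$ and then $U[f(l)]$. The commutativity of \eqref{cd_mophism_omegaP} guarantees that the decorations match. Taking $f=\iota_{\mathcal{T},l}$ for leaves $l$ recovers the projections, so $N_{P}A$ is strict Segal. A morphism of algebras induces a natural transformation since it commutes with all operations, and $N_P$ is clearly faithful.

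For the inverse, let $X$ be a strict Segal presheaf and put $A(c):=X(\eta_{c})$. Each operation $p\in P(c_1,\dots,c_n;c)$ gives a corolla $\mathcal{C}_p$, and the root inclusion $\iota_{\mathcal{C}_p}$ together with the Segal isomorphism gives
\[
\prod_i A(c_i)\cong X(\mathcal{C}_p)\to X(\eta_c).
\]
These maps assemble continuously into the action $P(c_1,\dots,c_n;c)\times\prod_i A(c_i)\to A(c)$, using that $P$ is discrete. To check associativity, let $\mathcal{T}$ be the two-vertex tree decorated by $p,q$ with inner edge $e$, so that $\partial_e\colon\mathcal{C}_{p\circ_e q}\to\mathcal{T}$. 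Both have the same leaves, and the Segal condition forces $X(\partial_e)$ to be the identity under the identifications, because $\partial_e$ commutes with the leaf inclusions. The root inclusion of $\mathcal{T}$ factors both as $\partial_e\circ\iota$ and as $\iota_{\mathcal{C}_p}$ followed by the outer face. The second factorisation composes with the leaf-face restriction, which is the action of $q$ on the relevant coordinates by naturality with the Segal maps. Equating the two factorisations yields $(p\circ_e q)\cdot a=p\cdot(\dots,q\cdot(\dots),\dots)$. The unit axiom follows in the same way from the degeneracies $\sigma_e$.

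Finally we check that the two constructions are mutually inverse. Starting from $A$, one recovers the same action on the nose. Starting from $X$, the Segal isomorphism identifies $X(\mathcal{T})$ with $\prod_l X(\eta_l)$, and the value of $X$ on a general $f$ is determined by its compositions with the leaf inclusions $\iota_{\mathcal{S},l}$. Each composite $f\iota_{\mathcal{S},l}$ factors as an inclusion of the branch root followed by inner faces and outer faces, so it is computed by the action just constructed.

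The main obstacle is exactly this last determination step: showing that every morphism of $\Omega P$, after restriction to leaves, is expressed through corolla root inclusions and identity-valued inner faces. We would handle it by induction on the number of vertices of $T[f(l)]$, using the decomposition of Lemma~\ref{lm:branches}~$(ii)$ to peel off the root corolla. The same argument gives fullness on morphisms, since a natural transformation between Segal presheaves is determined by its components at the $\eta_c$, and these commute with the actions because they are natural with respect to the corolla root inclusions.
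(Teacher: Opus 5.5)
Your proposal is correct and follows the paper's route. The action is read off from the root inclusions of $P$-corollas, associativity comes from inner faces acting as identities under the Segal identification, and units come from degeneracies. Full faithfulness follows because natural transformations are determined by their $\eta_c$-components and commute with the action by naturality at corolla root inclusions. You additionally make explicit two things the paper leaves implicit: the value of $N_PA$ on a general morphism via branches, and the induction showing that $X$ is determined by these data.

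One caveat, which applies equally to the paper: $X$ is recovered from its associated algebra only up to the Segal isomorphisms $X(\mathcal{T})\cong\prod_{l\in L(T)}X(\eta_l)$. So, strictly, you get an isomorphism onto the presheaves whose Segal maps are identities, or equivalently an equivalence onto all strict Segal presheaves.
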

	\begin{proof}
		Bijectivity on objects follows from the reconstruction of the $P$-algebra structure from any strict Segal presheaf, as explained above. 
		Full faithfulness follows from the fact that any natural transformation between strict presheaves is uniquely determined by its components on the edges $\eta_c$. Naturality with respect to the root face maps of corollas is exactly the condition that these components commute with the algebra action.
	\end{proof}

	By the adjunction \eqref{eq:nrpair} between the free operad functor and the dendroidal nerve, a morphism of dendroidal sets $\Omega[T] \to NP$ is equivalent to an operad morphism $\Omega(T)\to P$, relating to the construction of dendroidal nerve for algebras from \cite[\S9.5]{he-mo-22}.
	Hence,
	\[
		\Omega P=\int_{\Omega_{C}} NP.
	\]
	The equivalence of the two definitions for the nerve of an algebra (namely, $N(P,-)$ from~\cite[\S9.5]{he-mo-22} and $N_{P}$ from definition~\ref{def:denn}) follows directly by the following lemma.
	It may be considered as an enriched isomorphism version of the classical equivalence for set-valued presheaves on the category of elements, e.g. see~\cite[exercise~III.8(a)]{ma-mo-92}:
	\[
		\Set^{\mathbf{C}^{op}}/X\cong \Set^{(\int_{\mathbf{C}} X)^{op}}.
	\]
	
	\begin{proposition}\label{pr:algneriso}
		For any $P\in\Oper_{C}\Set$ there is a canonical isomorphism of categories:
		\[
			E\colon \Top^{\Omega^{op}_{C}} / NP \cong \Top^{(\Omega 	P)^{op}}\rcolon M.
		\]	
		Furthermore, one has an equality of functors $N_{P}=E\circ N(P,-)$.
	\end{proposition}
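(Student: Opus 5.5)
We prove Proposition~\ref{pr:algneriso} by adapting the classical equivalence $\Set^{\Cc^{op}}/X\cong\Set^{(\int_{\Cc}X)^{op}}$ to $\Top$-valued presheaves. The point that makes it an isomorphism of categories, not merely an equivalence, is that $NP$ is discrete: it is set-valued, regarded as a presheaf of discrete spaces. Over such a base, a space over $NP(T)$ is the same thing as a family of spaces indexed by the points of $NP(T)$.

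\emph{Construction of $E$.} Let $p\colon Y\to NP$ be an object of $\Top^{\Omega^{op}_{C}}/NP$. For a $P$-tree $\mathcal{T}=(T,\psi_T)$ with $\psi_T\in NP(T)=\Hm(\Omega(T),P)$, put
\[
E(Y)(\mathcal{T}):=p_T^{-1}(\psi_T)\subseteq Y(T).
\]
Because $NP(T)$ is discrete, this fibre is open and closed in $Y(T)$. Let $f\colon\mathcal{S}\to\mathcal{T}$ be a morphism of $\Omega P$; by \eqref{cd_mophism_omegaP} this means $f^{*}\psi_T=\psi_S$. Naturality of $p$ then shows that $Y(f)$ maps $p_T^{-1}(\psi_T)$ into $p_S^{-1}(\psi_S)$, and we take this restriction as $E(Y)(f)$. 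A map over $NP$ preserves fibres, so it restricts componentwise, and $E$ is a functor.

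\emph{Construction of $M$.} Given $Z\in\Top^{(\Omega P)^{op}}$, put
\[
M(Z)(T):=\coprod_{\psi\in NP(T)} Z(T,\psi),
\]
with the evident projection to $NP(T)$. For $f\colon S\to T$ in $\Omega_C$, the summand indexed by $\psi$ is sent by $Z(f)$ to the summand indexed by $f^{*}\psi$, where $f$ is viewed as the morphism $(S,f^{*}\psi)\to(T,\psi)$ of $\Omega P$. The remark in \S\ref{ssec:ptrees}, that a morphism of $\Omega P$ is uniquely determined by its target and an arrow of $\Omega_C$, gives functoriality, and also shows that each $f$ yields exactly one morphism of $\Omega P$ into $(T,\psi)$.

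\emph{Mutual inverses.} Both composites are the identity on the nose. First, $E M(Z)(T,\psi)$ is the $\psi$-summand of $M(Z)(T)$, which is $Z(T,\psi)$. Second, $M E(Y)(T)=\coprod_\psi p_T^{-1}(\psi)$, and this equals $Y(T)$ as a space because $Y(T)$ is the topological coproduct of its fibres over a discrete space. In CGWH the coproduct topology agrees with the topology of a space partitioned into clopen pieces, so no subtlety arises there. Morphisms are checked in the same way. This step is the only place where care is needed: to get an isomorphism rather than an equivalence, one should fix the coproduct concretely as a disjoint union of sets, so that $ME$ is literally the identity. If the coproduct is only determined up to isomorphism, the conclusion weakens to an equivalence.

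\emph{The equality $N_P=E\circ N(P,-)$.} Compare the formula for $N(P,A)$ from~\cite[\S9.5]{he-mo-22}. There $N(P,A)(T)$ is the set of pairs consisting of a $P$-decoration $\psi\in NP(T)$ and a family of elements of $A$ on the leaves, i.e.\ $\coprod_{\psi}\prod_{l\in L(T)}A(l)$, projecting to $NP(T)$. Its fibre over $\psi$ is $\prod_{l}A(l)$, which is \eqref{eq:algner}.

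It remains to check that the structure maps agree. For face and degeneracy maps, $N(P,A)$ acts by applying the $P$-operations of $\psi$ to the leaf inputs of pruned vertices, and by identities for inner faces and degeneracies. These are exactly the maps described after Definition~\ref{def:denn}. Agreement on the generators implies agreement on all morphisms, by the same uniqueness of factorization through faces and degeneracies.
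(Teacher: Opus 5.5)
Your proposal is correct and follows the paper's proof essentially step for step: fibres of $p_T$ over the discrete set $NP(T)$ define $E$, tagged coproducts define $M$, the clopen-fibre argument identifies $ME(Y)(T)$ with $Y(T)$, and $N_P=E\circ N(P,-)$ is checked on generating faces and degeneracies (your list should also include the root outer faces, which act by projection rather than by applying an operation). The one inaccuracy is your side remark: fixing the coproduct as a tagged disjoint union does not make both $ME$ and $EM$ literal identities (for instance $EM(Z)(T,\psi)=\{\psi\}\times Z(T,\psi)$), so, as in the paper, the composites are identities only up to this canonical relabelling.
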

	\begin{proof}
	By definition, an object in the slice category $\Top^{\Omega^{op}_{C}} / NP$ is a dendroidal space $Y\in \Top^{\Omega_{C}^{op}}$ equipped with a morphism of presheaves $\alpha\colon Y \to NP$. 
	We construct an explicit functor $E\colon \Top^{\Omega_{C}^{op}} / NP \to \Top^{(\Omega P)^{op}}$. 
	For any object $(Y, \alpha)$ in $\Top^{\Omega_{C}^{op}} / NP$, we define a presheaf $E(Y,\alpha)$ on $\Omega P$ by taking the set-theoretic fibers of the components of $\alpha$. 
	Specifically, for $\mathcal{T}=(T, \psi_{T}) \in \Omega P$, we set:
	\[
		E(Y,\alpha)(\mathcal{T}) := 	\alpha_T^{-1}(\psi_{T}) = \lb x \in 	Y(T) \mid \alpha_T(x) = \psi_{T} \rb.
	\]
	For a morphism $f_{*}\colon \mathcal{S}=(S,\psi_{S}) \to \mathcal{T}$ induced by $f\colon S\to T$, the map 
	\[
		E(Y,\alpha)(f_{*})\colon E(Y,\alpha)(\mathcal{T}) \to E(Y,\alpha)(\mathcal{S})
	\] 
	is simply the restriction of the map $Y(f)\colon Y(T) \to Y(S)$. 
	This is well-defined because naturality of $\alpha$ guarantees that 
	\[
		\alpha_S\bigl(Y(f)(x)\bigr)=NP(f)\bigl(\alpha_T(x)\bigr)=NP(f)(\psi_T)=\psi_S,
	\]
	for $x$ with $\alpha_T(x)=\psi_T$, the last equality following from \eqref{cd_mophism_omegaP}. 
	Hence the
	restriction lands in the fibre over $\psi_S$.
	
	On morphisms, let $h\colon (Y,\alpha)\to (Y',\alpha')$ be a map over $NP$, so
	that $\alpha'h=\alpha$. 
	Then $h_T\bigl(\alpha_T^{-1}(\psi)\bigr)\subseteq
	(\alpha'_T)^{-1}(\psi)$ for every $\psi\in NP(T)$, and we let
	$E(h)_{\mathcal{T}}$ be the restriction of $h_T$ to the fibre over $\psi_T$.
	For a morphism $\beta\colon F\to F'$ in $\Top^{(\Omega P)^{op}}$ we let
	$M(\beta)_T:=\coprod_{\psi\in NP(T)}\beta_{(T,\psi)}$, a map over $NP$ by the
	construction of the structure maps $\alpha_T$. 
	Since $NP$ takes discrete
	values, a map over $NP$ is precisely a family of maps of the fibres, so $E$
	and $M$ are mutually inverse on morphisms as well.
	
	We construct the inverse functor $M\colon \Top^{(\Omega P)^{op}} \to \Top^{\Omega^{op}_{C}} / NP$. 
	Given a presheaf $F \in \Top^{(\Omega P)^{op}}$, we define a dendroidal space $Y = M(F)\in \Top^{\Omega_{C}^{op}}$ by taking the coproduct over all possible $P$-decorations of a tree $T$:
	\[
		Y(T) := \coprod_{\psi \in NP(T)} F(T, 	\psi).
	\]
	The map $\alpha_T\colon Y(T) \to NP(T)$ is defined as the canonical projection that sends the component $F(\mathcal{T})$ to the element $\psi$. 
	For any tree morphism $f\colon S \to T$, the map $Y(f)\colon Y(T)\to Y(S)$ is defined on the component $F(T,\psi)$ as
	\[
		F(f_*)\colon F(T,\psi)\longrightarrow 	F\bigl(S,NP(f)(\psi)\bigr),
	\]
	where $f_*\colon \bigl(S,NP(f)(\psi)\bigr)\to (T,\psi)$ is the morphism of
	$\Omega P$ determined by $f$ and $\psi$ (see~\S\ref{ssec:ptrees}). 
	Functoriality
	of $M(F)$ follows from that of $F$ and of $NP$.
	
	We verify that the functors $E$ and $M$ are strictly inverse to each other. 
	For a presheaf $X \in \mathbf{Top}^{(\Omega P)^{op}}$, the space $M(X)(T) = \coprod_{\psi \in NP(T)} X(T, \psi)$ is equipped with the natural projection $\alpha_T$ onto $NP(T)$. 
	The set-theoretic fiber of $\alpha_T$ over $\psi$ is exactly the component $X(T,\psi)$, hence $E(M(X)) = X$. 
	Conversely, for any object $(Y, \alpha)$ in the slice category $\Top^{\Omega_C^{op}}/NP$, the space $NP(T)$ is discrete, so that $\alpha_T$ is a continuous map into a
	discrete space; its fibres are therefore clopen, and a subset $U\subseteq Y(T)$
	is open if and only if $U\cap\alpha_T^{-1}(\psi)$ is open for every $\psi$.
	Thus $Y(T)$ decomposes as the topological coproduct of its fibres 
	$Y(T) = \coprod_{\psi \in NP(T)} \alpha_T^{-1}(\psi)$. 
	Because $E(Y)(\mathcal{T}) = \alpha_T^{-1}(\psi_{T})$, applying the functor $M$ to $E(Y)$ reconstructs this exact coproduct and its projection $\alpha_T$, yielding $M(E(Y, \alpha)) = (Y, \alpha)$. 
	This establishes the canonical isomorphism of categories.		
	
	We prove the identity between functors from the claim of this proposition.
	By the construction, for a $P$-algebra $A$ one has 
	\[
		N(P,A)(T) = \coprod_{\psi \in NP(T)} \prod_{l \in L(T)} A(l),
	\]
	where the structure map $\alpha_T \colon N(P, A)(T) \to NP(T)$ is the projection from the coproduct 
	\[
		\coprod_{\psi \in NP(T)} \prod_{l \in L(T)} A(l)
	\] 
	to its index set $NP(T)$, which maps the component indexed by $\psi$ to the element $\psi$. 
	Applying the fiber functor $E$ one obtains the leaf-decoration space over $\mathcal{T}$:
	\[
		E(N(P, A))(\mathcal{T}) = 	\alpha_T^{-1}(\psi_{T}) = \prod_{l \in L(T)} A(l) = N_P A(\mathcal{T}).
	\]
	It remains to verify that this identification respects the presheaf restriction maps. 
	Any morphism $f_{*}\colon \mathcal{S} \to \mathcal{T}$ in $\Omega P$ is induced by a tree morphism $f\colon S \to T$ and  is compatible with the respective $P$-decorations. 
	The induced restriction map $E(N(P, A))(f)$ is simply the restriction of the map $N(P, A)(f)$ to the fiber over the discrete set $\psi$. 
	We verify this matches $N_P A(f)$ on the generators of $\Omega P$:
	\begin{itemize}
		\item \textbf{Inner faces:} Contracting an inner edge does not change the set of leaves, meaning $l(S) = l(T)$. 
		The restriction map on the leaf decoration space $\prod_{l \in L(T)} A(l)$ is simply the identity map. 
		This matches the action of $N_P A$, which sends inner faces to the identity to enforce associativity.
		
		\item \textbf{Outer faces:} Pruning an external corolla $v$ (decorated by the respective operation $p \in P(v)$) removes its input leaves and replaces them with a single new leaf (the respective root edge). 
		By the definition of the dendroidal nerve $N(P,A)$, the restriction map evaluates the operation $p$ on the corresponding leaf variables using the $P$-algebra structure of $A$. 
		This coincides with the structure maps of $N_P A$.
		
		\item \textbf{Root outer faces:} The inclusion $T_{\geqslant e}\subseteq T$ of a
		branch restricts on $\prod_{l\in L(T)}A(l)$ to the projection onto
		$\prod_{l\in L(T_{\geqslant e})}A(l)$, which is the corresponding structure
		map of $N_PA$.
		\item \textbf{Degeneracies:} Contracting a unary identity vertex leaves the
		respective leaf set unchanged, and the restriction map on the fibres is the
		identity, matching the strict unit condition of $N_PA$.

		\end{itemize}
		Since the maps $\alpha_{T}$ agree on all generating morphisms, we have the equality of presheaves $E(N(P, A)) = N_P A$.
		The proof is complete.
	\end{proof}	
		
	\subsection{Homotopy nerve of homotopy algebras}
	
	\begin{definition}\label{def:nhp}
		Define the \emph{homotopy nerve of a homotopy $P$-algebra} as the following functor
		\[
			N^{h}_{P}\colon 	\Alg(WP,\Top)\to 	\Top^{(\underline{\Omega} P)^{op}}_{strict},
		\]
		given by $N^{h}_{P}(A):=N_{P}(A)$ on any homotopy algebra object $A$ by~\eqref{eq:algner}, and by adjunction define 
		\[
			\underline{\Omega} P(\mathcal{S},\mathcal{T})\times 
			\prod_{l'\in L(T)} A(l')\to 
			\prod_{l\in L(S)} A(l),\ 
			(f,\mu)\mapsto \phi_{\mathcal{T}[f(l)]}(\mu_{l},-).
		\]
	\end{definition}
	
	\begin{lemma}
		The functor $N^{h}_{P}$ is well defined.
	\end{lemma}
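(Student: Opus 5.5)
We need to check three things: that the assignment of Definition~\ref{def:nhp} gives continuous maps, that it respects identities and composition in $\underline{\Omega} P$ (so $N^{h}_{P}A$ is a presheaf), that the result satisfies the leaf Segal condition, and that $N^{h}_{P}$ is functorial in homotopy algebra morphisms. Continuity is immediate: $\underline{\Omega} P(\mathcal{S},\mathcal{T})$ is a coprodu of products of cubes, and on the component $M_f\cong\prod_{l}I^{ie(T[f(l)])}$ the map is the product over $l\in L(S)$ of the structure maps $\phi_{\mathcal{T}[f(l)]}$ from~\eqref{eq:algstr}, precomposed with the projection $\prod_{l'\in L(T)}A(l')\to\prod_{l'\in L(T[f(l)])}A(l')$. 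This projection is well defined because the branches $T[f(l)]$ are pairwise disjoint by Lemma~\ref{lm:branches}~$(i)$. The leaf Segal condition holds by construction, since $N^{h}_{P}A(\mathcal{T})=\prod_{l\in L(T)}A(l)$ and the maps induced by $(\iota_{\mathcal{T},l},\varnothing)$ are projections. For this we use that $\phi_{\eta}$ is the identity, which is the unit of $WP$.

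For identities, the identity of $\mathcal{T}$ is $(\mathrm{id},\mu)$ with $\mu\equiv1$ on leaves and $E_{\mathrm{id}}(T)$ consisting only of edges above leaves. Hence every branch $T[l]$ is $\eta_l$, and the map is the identity.

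The main point is compatibility with composition. Given $(f,\mu)\colon\mathcal{S}\to\mathcal{U}$ and $(g,\nu)\colon\mathcal{U}\to\mathcal{T}$, we must show, for each $l\in L(S)$,
\[
\phi_{\mathcal{T}[gf(l)]}\bigl((\nu*\mu)_{l},-\bigr)=\phi_{\mathcal{U}[f(l)]}\bigl(\mu_{l},\bigl(\phi_{\mathcal{T}[g(m)]}(\nu_{m},-)\bigr)_{m\in L(U[f(l)])}\bigr).
\]
The plan is to use Lemma~\ref{lm:branches}~$(iii)$, which decomposes
\[
T[gf(l)]=g\bigl(U[f(l)]\bigr)\circ\bigl(T[g(m)]\bigr)_{m}.
\]
By the $WP$-relations, the right-hand side equals $\phi$ of the grafted metric tree, where the grafting edges $g(m)$ receive length $1$ (type $(iii)$ of Remark~\ref{rem:types}) and the grafted branches keep $\nu$ (type $(iv)$). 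It then remains to identify $\phi_{\mathcal{U}[f(l)]}(\mu_l,-)$ with $\phi$ of the metric tree $g(U[f(l)])$ carrying the metric $g_*\mu$.

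This identification is the expected obstacle. We factor $g$ into inner faces, outer faces and degeneracies, and check each generator against the defining relations of $WP$:
\begin{itemize}
\item an inner face creates an edge of length $0$ (type $(ii)$), which matches relation 1) identifying a zero-length edge with the contracted tree composed in $P$, consistent with the $P$-decorations via~\eqref{cd_mophism_omegaP};
\item a degeneracy inserts a unit vertex, and the max rule (type $(i)$) matches relation 2);
\item an outer face is an embedding, so it does not change the branch at all.
\end{itemize}
To keep the induction clean, we would use associativity (Lemma~\ref{lm:assoc}) and Lemma~\ref{lm:leafcomp} to reduce to the case $\mathcal{S}=\eta$. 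That is, we verify the identity only for composites $(g,\nu)\circ(\iota_{\mathcal{U},x},\mu)$, and then handle one generator $g$ at a time, where the case distinction of~\eqref{eq:comp} becomes explicit.

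Finally, functoriality in morphisms of $WP$-algebras is clear: such a morphism commutes with every $\phi_{\mathcal{T}}$, so its product over leaves is natural.
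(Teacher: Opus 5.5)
Your proposal is correct and follows essentially the paper's argument. Both reduce by leaf restriction (Lemma~\ref{lm:leafcomp}) to the identity \eqref{eq:wpcomp} in $WP$, identify the underlying trees via Lemma~\ref{lm:branches}~$(iii)$ (grafting edges of length $1$, grafted branches carrying $\nu$), and absorb the length-$0$ type $(ii)$ edges and the maxima over several preimages by the contraction and unit relations of $WP$; you additionally spell out the Segal condition, units and naturality, which the paper leaves implicit.

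The one point to state explicitly is your generator-by-generator induction. It works if it is run on the pushforward of metrics, using $(g_2g_1)_*\mu=(g_2)_*\bigl((g_1)_*\mu\bigr)$. It would not work by factoring $(g,\nu)$ itself in $\underline{\Omega}P$: single faces and degeneracies only produce lengths $0$ and $1$, so a $\nu$ taking a value in $(0,1)$ is not a composite of generating morphisms.
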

	\begin{proof}
		We prove that the definition of $N^{h}_{P}$ is compatible with the composition in Definition~\ref{def:thick}.
		The units of the composition correspond to $\phi_{\eta}$.
		The claim on compatibility in terms of~\eqref{eq:comp} is
		\[
		\phi_{\mathcal{T}[ gf(l)]}((\nu*\mu)_{l},-)=
		\phi_{\mathcal{U}[f(l)]}(\mu_{l},-)\circ\prod\limits_{m} \phi_{\mathcal{T}[g(m)]}(\nu_{m},-).
		\]
		By restriction to the leaves (see Lemma~\ref{lm:leafcomp}), it is enough to check equality of two elements for $WP$ (i.e. $P$-trees with metric on their inner edges) (on the right one has the infinitesimal composition of the operad $WP$):
		\begin{equation}\label{eq:wpcomp}
			(\mathcal{T}[gf(l)],(\nu*\mu)_{l})=
			(\mathcal{U}[f(l)],\mu_{l})\circ_{WP}
			(\mathcal{T}[g(m)],\nu_m{})_{m\in L(\mathcal{U}[f(l)])}.
		\end{equation}
		This is proved by case study by the definition of $\nu*\mu$, see~\eqref{eq:comp}, and properties of $W$-construction.
		The RHS is the tree $\mathcal{U}[f(l)]$ with each $\mathcal{T}[g(m)]$ grafted at the leaf $m$.
		On the inner edges of $\mathcal{U}[f(l)]$ the lengths are $\mu$, on those of $\mathcal{T}[g(m)]$ these are $\nu$, and $1$ on each grafting edge.
		On the left-hand side of~(\ref{eq:wpcomp}), by Lemma~\ref{lm:branches} we have the same tree with edge labels given by~\eqref{eq:comp}.
		The only possible discrepancies are the type $(ii)$ edges, of length $0$, and the edges with several preimages
		under $g$, whose length is the maximum of the respective lengths. 
		The latter is the unit relation of
		$W P$: the vertices deleted by $g$ are decorated by identities of $P$, and deleting
		such a valence $2$ vertex replaces the two adjacent lengths by their maximum.
		Since a length $0$ edge is identified with its contraction in $WP$, contracting all type $(ii)$ edges collapses each expansion $g(C_{w})$ to one vertex, whose decoration is given by $\psi_{\mathcal{U}}(w)$ by~\eqref{cd_mophism_omegaP}. 
		This proves~\eqref{eq:wpcomp}. 
	\end{proof}

	\begin{lemma}\label{lm:ssegal}
		For any strict Segal presheaf $X\colon  (\underline{\Omega} P)^{op}\to \Top$ and any $(f,\mu)\in\underline{\Omega}P(\mathcal{S},\mathcal{T})$ the $l$-th component of $X(f,\mu)$, $l\in L(S)$, equals
		\[
			X(\iota_{\mathcal{S},l},\varnothing)\circ 	X(f,\mu)= X\bigl(\iota_{\mathcal{T},f(l)},\mu_l\bigr)\colon X(\mathcal{T})\to X(\eta_l).
		\]
	\end{lemma}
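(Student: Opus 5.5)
The identity is an immediate consequence of functoriality of $X$ together with Lemma~\ref{lm:leafcomp}, so I will argue directly from the composition law in $\underline{\Omega} P$. First I fix what ``the $l$-th component'' of $X(f,\mu)$ means. Since $X$ is strict Segal, the target $X(\mathcal{S})$ is identified with $\prod_{l\in L(S)} X(\eta_l)$ via the maps $X(\iota_{\mathcal{S},l},\varnothing)$. Under this identification, the $l$-th component of any map into $X(\mathcal{S})$ is obtained by postcomposing with $X(\iota_{\mathcal{S},l},\varnothing)$. This gives the left-hand equality in the statement, which is just the definition.

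For the second equality, I use that $X$ is a contravariant (topologically enriched) functor on $\underline{\Omega} P$:
\[
X(\iota_{\mathcal{S},l},\varnothing)\circ X(f,\mu)=X\bigl((f,\mu)\circ(\iota_{\mathcal{S},l},\varnothing)\bigr).
\]
The composite inside is computed by Lemma~\ref{lm:leafcomp}:
\[
(f,\mu)\circ(\iota_{\mathcal{S},l},\varnothing)=(\iota_{\mathcal{T},f(l)},\mu_l).
\]
Substituting yields $X(\iota_{\mathcal{T},f(l)},\mu_l)$. I also check that the source and target objects match. The edge $\eta_l$ has color $c(l)$, and $f$ preserves colors, so $\eta_l=\eta_{c(f(l))}$ as objects of $\Omega P$. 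Hence $X(\iota_{\mathcal{T},f(l)},\mu_l)$ indeed maps $X(\mathcal{T})\to X(\eta_l)$.

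There is no real obstacle, since the combinatorial content is already in Lemma~\ref{lm:leafcomp}, that is, in the $*$-product formula~\eqref{eq:comp}. The only point needing care is that $\mu_l\in M_{\iota_{\mathcal{T},f(l)}}$. This is the case: $\mu_l$ is defined on $E_{\iota_{\mathcal{T},f(l)}}(T)=E^{+}(T[f(l)])$ and equals $1$ on the leaves there, which holds because $\mu\equiv1$ on the leaves of $T$ lying in $E_f(T)$. With that verified, the identity follows.
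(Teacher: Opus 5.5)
Your proposal is correct and follows essentially the same route as the paper: you use the strict Segal identification to define the components, apply contravariant functoriality to rewrite $X(\iota_{\mathcal{S},l},\varnothing)\circ X(f,\mu)$ as $X$ of the composite, and then invoke Lemma~\ref{lm:leafcomp}. Your extra checks, namely the colour matching and $\mu_l\in M_{\iota_{\mathcal{T},f(l)}}$, are harmless additions that the paper leaves implicit; the second is essentially the content of the first claim of Lemma~\ref{lm:leafcomp}.
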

	\begin{proof}
		By the strict Segal condition the map
		\[
		X(f,\mu)\colon 
		\begin{tikzcd}
		X(\mathcal{T})\arrow{r} &
		X(\mathcal S)\arrow{r}{\cong} &
		\prod\limits_{l'\in l(S)}X(\eta_{l'})
		\end{tikzcd}
		\] 
		is determined by its components
		$X(\iota_{l'},\varnothing)\circ X(f,\mu)$.
		Then by Lemma~\ref{lm:leafcomp} one has
		\[
		X(\iota_l,\varnothing)\circ 		X(f,\mu)=X\bigl((f,\mu)\circ(\iota_l,\varnothing)\bigr)=
		X\bigl(\iota_{\mathcal{T}, 		f(l)},\mu_l\bigr).
		\]
		This proves the claim.
	\end{proof}

	\begin{remark}\label{rm:etacomponents}
		The set $\Omega P(\eta_c,\mathcal{T})$ consists of the inclusions $\iota_{\mathcal{T},e}$
		of the edges $e$ of colour $c$, and $M_{\iota_{\mathcal{T},e}}\cong I^{ie(T[e])}$:
		a morphism $\eta_c\to\mathcal{T}$ in $\underline{\Omega}P$ is a branch of $\mathcal{T}$
		together with a metric on its inner edges. For $e=r(T)$ one obtains the root component
		$I^{ie(T)}$.
	\end{remark}

	\begin{theorem}\label{thm:halgdiag}
		For any unital $C$-colored operad $P$ in sets $N^{h}_{P}$ is an isomorphism of categories.
	\end{theorem}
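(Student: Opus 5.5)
We construct an explicit inverse functor $R\colon \Top^{(\underline{\Omega} P)^{op}}_{strict}\to\Alg(WP,\Top)$, following the pattern of Proposition~\ref{pr:dalg}, and check that both composites are identities. The functor $N^{h}_{P}$ is well defined by the preceding lemma. Its image consists of strict Segal presheaves, since $N^{h}_{P}A(\mathcal{T})=\prod_{l\in L(T)}A(l)$ and, by Definition~\ref{def:nhp} together with Lemma~\ref{lm:leafcomp}, the leaf inclusions $(\iota_{\mathcal{T},l},\varnothing)$ act as projections. The reason is that $\mathcal{T}[l]=\eta_l$ carries no metric and $\phi_{\eta}$ is the identity.

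Given a strict Segal presheaf $X$, set $B(c):=X(\eta_c)$ for $c\in C$. For a $P$-tree $\mathcal{T}$ with root edge $r$, Remark~\ref{rm:etacomponents} identifies the component of $\underline{\Omega}P(\eta_{r},\mathcal{T})$ over $\iota_{\mathcal{T}}$ with $I^{ie(T)}$. Using the Segal isomorphism $X(\mathcal{T})\cong\prod_{l}B(l)$, we then define
\[
\phi_{\mathcal{T}}\colon I^{ie(T)}\times\prod_{l\in L(T)}B(l)\to B(r),\qquad (\mu,x)\mapsto X(\iota_{\mathcal{T}},\mu)(x).
\]
Continuity follows from the topological enrichment of $X$. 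We must then verify the relations of $WP$.

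\begin{itemize}
\item For the contraction relation ($t_e=0$), factor $(\iota_{\mathcal{T}},\mu)$ with $\mu(e)=0$ as $(\partial_e,\varnothing)\circ(\iota_{\mathcal{T}/e},\mu|)$. By the type $(ii)$ rule in~\eqref{eq:comp}, as in Example~\ref{ex:two-vertex}, this factorization holds, and the inner face acts as the identity under the Segal identification.
\item For the unit relation, factor through a degeneracy using the max rule of type $(i)$.
\item For the grafting relation ($t_e=1$), factor through an outer face of $\mathcal{T}$. Then the composite, read componentwise via Lemma~\ref{lm:ssegal}, is exactly $\phi_{\mathcal{B}}\circ\phi_{\mathcal{U}}$, by the type $(iii)$ rule.
\item Operadic associativity of the action under grafting at several leaves follows from \eqref{eq:wpcomp}, read backwards in $\underline{\Omega}P$, together with functoriality of $X$.
\end{itemize}

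Thus $R(X)=B$ is a $WP$-algebra. A morphism of Segal presheaves is determined by its components on the $\eta_c$, and naturality along the root components $I^{ie(T)}$ is precisely $WP$-equivariance. Hence $R$ is a functor, and it is full and faithful in the same way as in the proof of Proposition~\ref{pr:dalg}.

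It remains to show $R N^{h}_{P}=\Id$ and $N^{h}_{P}R=\Id$.
\begin{itemize}
\item The first composite is immediate: $N^{h}_{P}A$ evaluated on $(\iota_{\mathcal{T}},\mu)$ is $\phi_{\mathcal{T}}(\mu,-)$, because $\mathcal{T}[r]=\mathcal{T}$ and $\mu_r=\mu$.
\item For the second composite, the two presheaves agree on objects by the Segal condition. On an arbitrary morphism $(f,\mu)$, Lemma~\ref{lm:ssegal} shows that the $l$-th component of $X(f,\mu)$ equals $X(\iota_{\mathcal{T},f(l)},\mu_l)$.
\end{itemize}

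The main obstacle is this last identification. We must show that $X(\iota_{\mathcal{T},f(l)},\mu_l)$, which is the action of a branch inclusion into the whole of $\mathcal{T}$ rather than into the branch tree $\mathcal{T}[f(l)]$, equals $\phi_{\mathcal{T}[f(l)]}(\mu_l,-)$ precomposed with the projection. To handle it, we factor $(\iota_{\mathcal{T},f(l)},\mu_l)$ as $(j,\varnothing)\circ(\iota_{\mathcal{T}[f(l)]},\mu_l)$, where $j\colon\mathcal{T}[f(l)]\to\mathcal{T}$ is the outer-face embedding of the branch. We check this factorization with~\eqref{eq:comp}: all relevant edges lie in $E(j(\mathcal{T}[f(l)]))$, and $j_*$ is the identity there because $j$ is injective. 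Finally, $X(j,\varnothing)$ is the projection onto the leaves of the branch, by the Segal condition and Lemma~\ref{lm:branches}~$(i)$. This completes the proof that $N^{h}_{P}$ and $R$ are mutually inverse.
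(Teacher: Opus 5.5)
Your proposal is correct and follows the paper's proof essentially step for step. It uses the same inverse $R(X)(c)=X(\eta_c)$, with structure maps read off the root components $I^{ie(T)}$. It verifies the $WP$-relations by the same factorizations through $\partial_e$, degeneracies and outer faces, and it handles morphisms by the same Segal-plus-naturality argument. Your explicit factorization $(\iota_{\mathcal{T},x},\mu)=(j,\varnothing)\circ(\iota_{\mathcal{T}[x]},\mu)$ supplies a step the paper only asserts: it is needed for $N^h_P R=\Id$ on morphisms, and implicitly also in the $t_e=1$ check, to identify the $e$-component $X(\iota_{\mathcal{T},e},\lambda''_e)$ with $\phi_{\mathcal{U}_e}$ composed with the projection.
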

	\begin{proof}
		By the construction, $N^{h}_{P}(A)$ is a strict Segal presheaf for any homotopy $P$-algebra $A$.
		Conversely, define the inverse functor $R$ to $N^{h}_{P}$ by letting $R(X)(c):=X(\eta_{c})$ on any presheaf $X\colon (\underline{\Omega} P)^{op}\to\Top$.
		The root component of  $\underline{\Omega}P(\eta_{c},\mathcal{T})$ is $I^{ie(T)}$.
		Furthermore, by Lemma~\ref{lm:ssegal} it is enough to know $X$ on
		morphisms with domain $\eta_l$.
		Then define the structure map for $A=R(X)$ by
		\[
			\phi_{\mathcal{T}}\colon
			I^{ie(T)}\times \prod\limits_{l\in L(T)} A(l)\cong		
			I^{ie(T)}\times X(\mathcal{T})\to 
			X(\eta_{r(T)})=A(r(T)),
		\]
		using strictness of $X$.
		We check the $WP$-axioms by looking at compositions of morphisms in $\underline{\Omega} P$.
		For $t_{e}=0$ consider 
		\[
			\begin{tikzcd}
				\eta_{r}\arrow{r}{(f,\lambda)} &
				\mathcal{T}/e\arrow{r}{(\partial_{e},\varnothing)} &
				\mathcal{T}			
			\end{tikzcd}
		\] 
		for some length function $\lambda$.
		In $\underline{\Omega} P$ this chain composes by type $(ii)$ (length $0$) at edge $e$ and by type $(i)$ at all other edges (whose lengths are given by the respective values of $\lambda$). 
		Therefore, the corresponding relation for the presheaf $X$ (letting $F=\lb t_{e}=0\rb$) is
		\[
			\phi_{\mathcal{T}}|_{F}=
			\phi_{\mathcal{T}/e}\circ X(\partial_{e},\varnothing).
		\]
		Notice $X(\partial_{e},\varnothing)=\Id$ by Lemma~\ref{lm:ssegal}, and that the leaves of $\mathcal{T}$, $\mathcal{T}/e$ are the same.
		Hence the necessary $W$-identity follows.
		
		For $t_{e}=1$ write $\mathcal{T}=\mathcal{B}\circ \mathcal{U}_{e}$ as a grafting of one branch (the general case then follows by induction).
		The chain
		\[
		\begin{tikzcd}
			\eta_{r}\arrow{r}{(f,\lambda')} &
			\mathcal{B}\arrow{r}{(g,\lambda'')} &
			\mathcal{T},
		\end{tikzcd}
		\] 
		composes at the grafting edge $e$ by type $(iii)$ (length $1$), at $\mathcal{B}$ by type $(i)$ ($\lambda'$), at the branch $\mathcal{U}_{e}$ by type $(iv)$ ($\lambda''$).
		The respective composition for $X$ (writing $F=\lb t_{e}=1\rb$) is
		\[
			\phi_{\mathcal{T}}|_{F}=
			\phi_{\mathcal{B}}\circ \phi_{\mathcal{U}_{e}}.
		\]

		Consider a degeneracy $\sigma_{e}\colon \mathcal{T}'\to \mathcal{T}$. 
		It removes a valence $2$ vertex $v$ (incident to $e',e''$) and produces the edge $e$.
		The degeneracy vertex is decorated by an identity in $P$. 
		Consider the composition
		\[
		\begin{tikzcd}
			\eta_{r}\arrow{r}{(f,\lambda)} &
			\mathcal{T}'\arrow{r}{(\sigma_{e},\varnothing)} &
			\mathcal{T}.
		\end{tikzcd}		
		\]
		It is given by type $(i)$ (length $t_{e}=\max\lb\lambda(e'),\lambda(e'')\rb$) at $e$, as well as at all other edges (whose lengths are given by $\lambda$).
		The respective identity for $X$ (writing $F=\lb t_{e}=\max\lb\lambda(e'),\lambda(e'')\rb\rb$) is
		\[
			\phi_{\mathcal{T}}|_{F}=
			\phi_{\mathcal{T}'},
		\]
		i.e. the unit axiom.
		Hence, $A=R(X)$ is a homotopy $P$-algebra.

		Clearly, the functors $N^{h}_{P}$ and $R$ are inverse to each other on all objects.
		Consider the natural morphism
		\begin{equation}\label{eq:embmor}
			\Alg_{WP}(A,B)\to
			\Nat(N^{h}_{P}A,N^{h}_{P}B).
		\end{equation}
		Given any natural transformation $\alpha\colon N^{h}_{P}A\to N^{h}_{P}B$, write for a $P$-tree $\mathcal{T}$
		\[
			\alpha_{\mathcal{T}}=\prod\limits_{l\in L(T)} \alpha_{l},
		\] 
		by the Segal condition.
		Hence, $\alpha$ is uniquely determined by $\alpha_{\eta_{c}}$, and~\eqref{eq:embmor} is injective.
		By Lemma~\ref{lm:leafcomp}, it is enough to determine a functor on morphisms having leaf domain only.
		By naturality at 
		\[
			(\iota_{\mathcal{T}},\mu)\colon \eta_{r(T)}\to \mathcal{T},
		\]
		one has
		\[
			\alpha_{\eta_{r(T)}}\circ
			\phi^{A}_{\mathcal{T}}(\mu)=
			\phi^{B}_{\mathcal{T}}(\mu)\circ
			\prod_{l\in L(T)}\alpha_{l}.
		\]
		By Lemma~\ref{lm:leafcomp}, every element of $WP$ has a form $(\mathcal{T},\mu)$.
		Then this implies that $\alpha=(\alpha_{\eta_{c}})$ commutes with the homotopy algebra structure.
		Hence,~\eqref{eq:embmor} is surjective.
		This proves the claim on isomorphism of categories.
	\end{proof}
	
	\begin{example}\label{ex:diag}
		Let $\Cc$ be a small category and $P=\Diag_{\Cc}$ the associated unary operad, so that
		homotopy $P$-algebras are homotopy coherent diagrams over $\Cc$ \cite{be-mo-06}. 
		A $P$-tree is a chain $s=(c_0\to\dots\to c_n)$ in $\Cc$, drawn as a linear tree with
		$n$ vertices, root edge $c_n$ and the single leaf $c_0$; in particular the strict
		Segal condition reads $X(\mathcal{T}_{s})\cong X(\eta_{c_{0}})$.
		The unique branch of the
		root inclusion $\iota_{\mathcal{T}}$ is the whole tree, so the root component of $\underline\Omega P(\eta_{c_n},\mathcal T_s)$ is $I^{n-1}$.
		For any homotopy algebra $A$ over $P$, the corresponding morphism $N^h_PA \iota_{\mathcal{T}}$ is the higher homotopy
		$F(01\dots n)\colon I^{\,n-1}\times F(c_0)\to F(c_n)$ of the respective homotopy coherent diagram. 
		The facet $t_e=0$ contracts
		an inner edge, i.e. composes two adjacent arrows of $s$, giving
		$F(01\dots\hat{i}\dots n)$; the facet $t_e=1$ splits the chain, yielding the composition
		$F(i\dots n)\circ F(01\dots i)$. 
		One thus recovers the cubes of Vogt and
		Cordier--Porter~\cite{co-po-86} on the nose.
	\end{example}
	
	\subsection{Horns of homotopy algebras revisited}\label{ssec:comphorns}	
	We compare horns for a homotopy $P$-algebra $A$ (definition~\ref{defn:alghorn}) and its homotopy dendroidal nerve $N_P^h A$. 
	Let $A \in \mathbf{Alg}(W P, \Top)$ be a homotopy $P$-algebra, and let $\mathcal{T} = (T, \psi_{T})$ be a $P$-tree with root edge $\eta_r$. 
	
	\begin{proposition}\label{pr:horncorr}
		An operadic horn $\Lambda_x(A, \mathcal{T}): \Lambda_x(T) \to \mathrm{End}(UA)$ is canonically equivalent to the restriction of $N^{h}_{P} A$ to the $x$-box of $\underline{\Omega} P(\eta_{r(T)},\mathcal{T})$.
	\end{proposition}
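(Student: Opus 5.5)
The plan is to reduce everything to the root component $I^{ie(T)}\subseteq\underline{\Omega}P(\eta_{r(T)},\mathcal{T})$ from Remark~\ref{rm:etacomponents}, and to match facets of this cube with composites in $\underline{\Omega}P$. By Lemma~\ref{lm:ssegal} and Theorem~\ref{thm:halgdiag}, the restriction of $N^{h}_{P}A$ to this component is, after adjunction, exactly the structure map $\phi_{\mathcal{T}}\colon I^{ie(T)}\times\prod_{l\in L(T)}A(l)\to A(r(T))$. Its adjoint is $\theta_{\mathcal{T}}\colon I^{ie(T)}\to\End UA$. By Definition~\ref{defn:alghorn}, the operadic horn $\Lambda_x(A,\mathcal{T})$ is $\theta_{\mathcal{T}}$ restricted to the $x$-box $\Lambda_x(T)$, i.e. the union of all facets except the $x$-facet. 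Two things therefore remain: the same union of facets must arise in $\underline{\Omega}P$, and the facet labels must agree.

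First I will identify each facet of $I^{ie(T)}$ as the image of a composition map of $\underline{\Omega}P$. For an inner edge $e$, the facet $\{t_e=0\}$ is the image of
\[
\underline{\Omega}P(\eta_r,\mathcal{T}/e)\times\{(\partial_e,\varnothing)\}\to\underline{\Omega}P(\eta_r,\mathcal{T}).
\]
This holds because $e$ is of type $(ii)$ in \eqref{eq:comp}, as in Example~\ref{ex:two-vertex} and the proof of Theorem~\ref{thm:halgdiag}. The facet $\{t_e=1\}$ is the image of composites $\eta_r\to\mathcal{B}\to\mathcal{T}$ through the outer face that prunes the branch $\mathcal{U}_e$ above $e$, since there $e$ is a grafting edge of type $(iii)$. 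Under $N^h_PA$ these facets carry the labels $\phi_{\mathcal{T}/e}$ and $\phi_{\mathcal{B}}\circ\phi_{\mathcal{U}_e}$ respectively, using Lemma~\ref{lm:ssegal} to get $X(\partial_e,\varnothing)=\Id$. These are exactly the facet labels $\theta_{T/e}$ and $\theta_{B}\circ\theta_{U}$ of \S\ref{ssec:homoalg}.

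Second, I will define the $x$-box of $\underline{\Omega}P(\eta_{r(T)},\mathcal{T})$ as the union of the images of these composition maps over all faces $\partial_{x'}T$ with $x'\neq x$. Here $x'$ ranges over inner edges and over external vertices, whose outer faces are the $t=1$ facets. This definition mirrors the colimit \eqref{eq:colhornop}. Functoriality of $N^h_PA$ then gives a canonical map from the $\underline{\Omega}P$-side restriction to the colimit $\Lambda_x(T)=\col_{S}W\Omega(S)\to\End UA$. Lemma~\ref{lm:red} identifies the $W\Omega(S)$ pieces with the root components of $\underline{\Omega}P(\eta,\mathcal{S})$, using Lemma~\ref{lm:leafcomp} to see that every element of $W\Omega(S)$ is a pair $(\mathcal{S},\mu)$. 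Equivalence of the two restrictions then follows because both are $\theta_{\mathcal{T}}$ on the same subspace.

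The main obstacle is showing that the colimit $\Lambda_x(T)$ in operads matches the union of facets inside the cube, with no extra identifications. In particular, degenerate composites $(\sigma_e,\varnothing)$, where lengths combine by $\max$, must land back in facets rather than creating new points. My first attempt at this will be a case check on the four edge types of Remark~\ref{rem:types}, aiming to show that every composite landing in the root component factors through a face map. Associativity (Lemma~\ref{lm:assoc}) will then ensure that overlaps of facets are glued compatibly, matching Lemma~\ref{lm:inter}-style intersections.
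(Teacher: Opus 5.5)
Your opening paragraph and first step already make up the paper's proof. The root component of $\underline{\Omega}P(\eta_{r(T)},\mathcal{T})$ is the cube $I^{ie(T)}$ (Remark~\ref{rm:etacomponents}), and on it $N^h_PA$ is $\phi_{\mathcal{T}}$, adjoint to $\theta_{\mathcal{T}}$. The type $(ii)$/$(iii)$ computation from the proof of Theorem~\ref{thm:halgdiag} gives the facet labels $\theta_{\mathcal{T}/e}$ and $\theta_{\mathcal{B}}\circ\theta_{\mathcal{U}}$. On both sides the $x$-box is the union of all facets of this one cube except the $x$-facet (Definition~\ref{defn:alghorn}). So both horns are $\theta_{\mathcal{T}}$ on the same subspace, and the paper stops there.

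The problem is your second step. You replace the $x$-box by the union of images of composites through the codimension-one faces $\partial_{x'}T$, $x'\neq x$, with the $t=1$ facets indexed by external vertices. That set is in general strictly smaller than the $x$-box, because the facets $\{t_{e'}=1\}$ are indexed by inner edges. Suppose both endpoints of $e'$ carry at least two inner edges. Then $\{t_{e'}=1\}$ lies in every $x$-box, and it is the image of $\eta_r\to\mathcal{B}\xrightarrow{(g,\nu)}\mathcal{T}$ with $g$ an \emph{iterated} outer face. Factoring $(g,\nu)$ through $(\partial_v,-)$ or $(\partial_{e''},\varnothing)$ forces $t_{e_v}=1$, resp.\ $t_{e''}=0$, so your union misses the relative interior of this facet. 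Likewise, there is no composite $\eta_{r(T)}\to\partial_r\mathcal{T}\to\mathcal{T}$; the root facet comes from $C_r\to\mathcal{T}$ instead.

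Your union also does not mirror \eqref{eq:colhornop}. A colimit in operads contains composites of operations coming from different faces, and that is exactly how $\{t_{e'}=1\}$ enters it. With your definition, ``both are $\theta_{\mathcal{T}}$ on the same subspace'' is false.

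Your plan for the ``main obstacle'' would fail as well. Let $e$ be an inner edge with halves $e_1,e_2$ in $\sigma_eT$. The composite $\eta_r\xrightarrow{(\iota_{\sigma_e\mathcal{T}},\lambda)}\sigma_e\mathcal{T}\xrightarrow{(\sigma_e,\varnothing)}\mathcal{T}$ has $t_e=\max\{\lambda(e_1),\lambda(e_2)\}$ and sweeps out the whole cube, interior included. So decomposable morphisms do not stay in the boundary, and the box cannot be characterized as what factors through faces.

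None of this is needed. Drop the second step and the obstacle: identifying the colimit \eqref{eq:colhornop} with the box belongs to Definition~\ref{defn:alghorn}, not to this proposition. What remains is a complete proof along the paper's lines.
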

	\begin{proof}
		The root component of $\underline{\Omega} P(\eta_{r(T)},\mathcal{T})$ is $I^{ie(T)}$, and the facets $t_{e}=0,1$ correspond to $\theta_{\mathcal{T}/e}$ ($t_{e}=0$ for $x=e$) and $\theta_{\mathcal{B}}\circ\theta_{\mathcal{U}}$ ($t_{e}=1$ for $x=v$), respectively.
		Then two notions of an $x$-horn are canonically equivalent to each other as unions of facets of the respective cubes.
	\end{proof}

	\section{Comparison with (homotopy) operads}\label{sec:compoper}

	Throughout this section $\mathcal{O}_C$-trees are planar, and we write $ie^{O}(\mathcal{T})$
	for the set of inner edges of an $\mathcal{O}_C$-tree $\mathcal{T}$, keeping $ie(T)$ for the
	inner edges of a $C$-coloured tree $T$. 
	Recall from the proof of Theorem~\ref{thm:halgdiag} that there is the functor
	\[
	R=\bigl(N^h_{\mathcal{O}_C}\bigr)^{-1}\colon
	\Top^{\underline{\Omega}{\mathcal{O}_C^{op}}}_{strict}
	\rightarrow \Alg(W\mathcal{O}_C,\Top).
	\]
	
	\subsection{Embedding for strict operads}\label{ssec:strictop}

	For the $\N$-colored operad $\mathcal{O}$ of operads (in sets) the respective algebras form $1$-colored operads~\cite[\S2.4]{clrrw-22}:
	\[
		\Oper^{1}_{\Top}\cong 	\Alg(\mathcal{O},\Top).
	\]
	More generally, $C$-colored operads are algebras over the operad $\mathcal{O}_{C}$ in sets~\cite[\S 1.5.6, \S 1.5.7]{be-mo-06},~\cite[\S 3]{gu-vo-12}.	
	The set of colors in $\mathcal{O}_{C}$ is given by all $C$-colored corollas.
	In the particular case $P=\mathcal{O}_{C}$ (see~\cite{clrrw-22}), there is an embedding
	\[
		\Psi\colon 	\Alg(\mathcal{O}_{C},\Top)\subset \Top^{\Omega^{op}_{C}}_{strict},
	\]
	to strict Segal presheaves, see~\cite{clrrw-22}.
	The presheaf $\Psi(P)$ sends $\eta$ to a point, and $T$ to $\prod_{v\in iv(T)} P(v)$.
	On a morphism $g\colon S\to T$, 
	\[
	\Psi(P)(g)\colon \prod_{v\in iv(T)} P(v)\to \prod_{w\in iv(S)} P(w)
	\]
	is the natural composition.
	(Notice that outer faces contribute by the coordinatewise projections.)

	\subsection{Embedding for homotopy operads}\label{ssec:homop}
	Recall the definition of the explosion category $\widetilde{\Omega}$ of $\Omega_C$ from~\cite[Def.~B.1]{clrrw-22}.
	For a morphism $g\colon S\to T$ in $\Omega_C$, let $\Path_\Omega(S,T)_g$ be the poset of
	factorizations of $g$ modulo identity morphisms, ordered by refinement, and let
	$K_g:=|\Path_\Omega(S,T)_g|$. The topological category $\widetilde\Omega$ has the
	objects of $\Omega_C$ and
	\[
	\widetilde{\Omega}(S,T):=
	\coprod_{g\in\Omega_C(S,T)}K_g ,
	\]
	with composition given by the concatenation of factorizations. 
	
	In~\cite[Thm.~B.6]{clrrw-22} the full embedding
	\[
		\Psi^{h}\colon\Alg(W\mathcal{O}_C,\Top)\subset\Top^{\widetilde\Omega^{op}}_{strict}
	\]
	is constructed. 
	By~\cite[(B.1)]{clrrw-22} the embedding $\Psi^h$ acts at a vertex $v\in iv(S)$ through
	the map 
	\[
		\alpha_v\colon K_g\times\prod_{w\in iv(g(\mathcal{C}_v))}A(w)\to A(v).
	\]
	Since $A$ is a $W\mathcal{O}_C$-algebra, $\alpha_v$ is adjoint to a map
	\[
		\widehat{\alpha}_v\colon K_g\rightarrow
		\eps^{-1}\bigl(g(\mathcal{C}_v)\bigr)\subseteq W\mathcal{O}_C .
	\]

	\subsection{Factorizations of dendroidal morphisms, and auxiliary $\mathcal{O}_{C}$-trees}\label{ssec:facaux}
	For any $g\colon S\to T$ let
	$P_g:=iv(T)\setminus iv(g(S))$ be the set of pruned vertices.

	\begin{lemma}\label{lem:bookkeeping}
		\begin{enumerate}[label=(\roman*)]
			\item For any $g\colon S\to T$, one has 
			\[
			iv(T)=\Bigl(\coprod_{v\in iv(S)} iv\bigl(g(C_v)\bigr)\Bigr)\amalg P_{g}.
			\]
			\item For any $\mathcal{O}_C$-tree $\mathcal{T}$ one has $L(\mathcal{T})=iv(\eps(\mathcal{T}))$.
		\end{enumerate}
	\end{lemma}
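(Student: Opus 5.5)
Part (i) is a counting statement about the vertices of $T$, and I would reduce it to the generators of $\Omega_C$. Every morphism $g\colon S\to T$ factors as a composite of degeneracies, inner faces and outer faces. The assignment $v\mapsto g(C_v)$ sends each vertex $v\in iv(S)$ to the subtree of $T$ spanned by the image of the corolla $C_v$. This subtree has root $g(\text{out}(v))$ and leaves $g(\text{in}(v))$, and by Lemma~\ref{lm:branches}~$(i)$ and~$(ii)$ its leaves are exactly the images of the inputs of $v$. The plan is therefore to check three things:
\begin{itemize}
\item[(a)] the vertex sets $iv(g(C_v))$ for distinct $v$ are pairwise disjoint;
\item[(b)] their union is exactly $iv(g(S))$;
\item[(c)] $iv(g(S))$ is a subset of $iv(T)$, whose complement is by definition $P_g$.
\end{itemize}
Statement~(c) is tautological once we read $g(S)$ as the subtree $T[g(r(S))]$ truncated at the edges $g(l)$, $l\in L(S)$, which is a subtree of $T$. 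So the content lies in (a) and (b).

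For (a) and (b) I would argue via the order on edges. A vertex $u$ of $g(S)$ has an output edge $a$ lying in $g(S)$ and not equal to a leaf of $g(S)$. Take the maximal edge $x$ of $S$ with $g(x)\le a$. I expect $x$ to be unique: distinct candidates would be incomparable, and then Lemma~\ref{lm:branches}~$(i)$ applied to subtrees would give disjoint branches. Let $v$ be the vertex directly above $x$; then $u\in iv(g(C_v))$. Uniqueness of $x$, hence of $v$, gives disjointness. One caveat: if $v$ is a degenerate unary vertex, then $g(C_v)=\eta$ and it contributes no vertices, and $x$ must then be replaced by the input edge of $v$, as in the proof of Lemma~\ref{lm:branches}~$(i)$.

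The step I expect to need most care is this degeneracy bookkeeping. My first approach is induction on the length of a factorization of $g$ into generators. For a single generator the identity is immediate:
\begin{itemize}
\item an inner face contributes the two vertices of the expanded edge to a single $g(C_v)$;
\item an outer face contributes the pruned vertex to $P_g$;
\item a degeneracy has $g(C_v)=\eta$ for the deleted vertex.
\end{itemize}
For the inductive step, compose $g=g_2g_1$ and use Lemma~\ref{lm:branches}~$(iii)$ to write $g_2g_1(C_v)$ as the union of the trees $g_2(g_1(C_v))$. This gives
\[
iv(g_2g_1(C_v))=\coprod_{w\in iv(g_1(C_v))} iv(g_2(C_w)),
\]
and the pruned sets combine as $P_{g_2g_1}=P_{g_2}\amalg\coprod_{w\in P_{g_1}} iv(g_2(C_w))$.

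Part (ii) is a direct unwinding of definitions. An $\mathcal{O}_C$-tree $\mathcal{T}$ has as edge colors $C$-colored corollas, and its vertices are decorated by elements of $\mathcal{O}_C$, that is, by $C$-colored planar trees. The augmentation $\eps$ evaluates the tree operation by substituting the leaf corollas into the tree step by step. The result $\eps(\mathcal{T})$ is the $C$-colored tree obtained by grafting the corollas attached to the leaves of $\mathcal{T}$ along the tree shape recorded by the decorations. Each leaf of $\mathcal{T}$ carries one corolla, and so contributes exactly one vertex of $\eps(\mathcal{T})$. Conversely, every vertex of $\eps(\mathcal{T})$ arises from exactly one leaf, because operadic composition in $\mathcal{O}_C$ is substitution of trees into vertices, which preserves the set of input corollas. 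I would verify this by induction on the number of vertices of $\mathcal{T}$, with the base case a single $\mathcal{O}_C$-corolla decorated by a tree $S$, where $L=iv(S)$ holds by the definition of $\mathcal{O}_C$ from~\cite{be-mo-06}.
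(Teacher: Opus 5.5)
Your proposal is correct, and its skeleton is the paper's. In (i), both arguments use $iv(T)=iv(g(S))\amalg P_g$ to reduce the claim to showing that the sets $iv(g(C_v))$ partition $iv(g(S))$. Part (ii) is the same induction on vertices in both: $\eps$ substitutes decorations into one another, so the leaves of the composite are the vertices of the substituted trees.

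The difference is in how the partition is justified. The paper gets covering from $S=\bigcup_v C_v$. It gets disjointness from the observation that two corollas of $S$ share at most an edge, hence so do their images, and gives no further argument for either point. You instead take a vertex $u$ of $g(S)$ with output edge $a$ and single out the unique maximal $x\in E(S)$ with $g(x)\le a$. Uniqueness comes from Lemma~\ref{lm:branches}~(i), applied to an outer face of $S$ having two would-be maximal candidates as leaves. Placing $u$ in $g(C_v)$ then follows from Lemma~\ref{lm:branches}~(ii) applied to $C_v$. This costs a few more lines, but it proves the two facts the paper states without argument. They are not purely formal, since inner faces create vertices of $T$ that are not images of vertices of $S$.

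Two small remarks on your write-up. First, the degeneracy caveat is vacuous. If $v$ were contracted, its input $y$ would satisfy $g(y)=g(x)\le a$ with $y>x$, contradicting the maximality of $x$. Second, consider your alternative induction on factorizations. There the identity $iv(g_2g_1(C_v))=\coprod_{w\in iv(g_1(C_v))} iv(g_2(C_w))$ is really statement (b) for the restriction of $g_2$ to the subtree $g_1(C_v)$. Likewise, your formula for $P_{g_2g_1}$ is (b) for the restriction of $g_2$ to $g_1(S)$. Neither follows from Lemma~\ref{lm:branches}~(iii). So the induction hypothesis has to be stated for such restrictions, which are again generators or isomorphisms onto outer faces. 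The order-theoretic route avoids this bookkeeping and is the cleaner of your two arguments.
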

	\begin{proof}
		We prove the first claim. 
		By the definition of $P_g$ one has $iv(T)=iv(g(S))\amalg P_g$, the
		complement being the set of vertices pruned by the outer face $g(S)\subseteq T$. Since
		$S=\bigcup_{v\in iv(S)}C_v$, the expansions cover,
		$g(S)=\bigcup_{v}g(C_v)$. 
		They are disjoint on vertices: for $v\neq v'$ the corollas $C_v$, $C_{v'}$ share at most an edge, so $g(C_v)$ and $g(C_{v'})$ share at most an edge as well. 
		Finally $iv(g(C_v))=\varnothing$ precisely when $g$ contracts the unary vertex
		$v$, i.e. $g(C_v)=\eta$.
		
		The second claim follows by the induction on the number of vertices of $\mathcal{T}$, using that $\eps$
		substitutes the decorations of the vertices of $\mathcal{T}$ into one another.
		A substitution identifies the leaves of the result with the vertices of the substituted trees.
	\end{proof}

	Let $\sigma=(g_1,\dots,g_n)$ be any factorization of $g$, and let $v\in iv(S)$.
	For $0\leqslant i\leqslant n$ let $T_i$ denote the target of $g_i\cdots g_1$,
	with $T_0:=S$ and $T_n=T$, and let
	\[
		V_i:= iv\bigl(g_i\cdots g_1(C_v)\bigr)\subseteq iv(T_i)
	\]
	be the set of vertices of the subtree of $T_i$ into which $v$ has been
	expanded $i$ times.
	Thus $V_0=\{v\}$ and $V_n=iv\bigl(g(C_v)\bigr)$. 
	One has
	\[
		V_{i+1}=\coprod_{w\in V_i} iv\bigl(g_{i+1}(C_w)\bigr).
	\]

	\begin{definition}\label{def:Gv}	
		For any factorization $\sigma=(g_1,\dots,g_n)$ of $g$, and any $v\in iv(S)$, define the $\mathcal{O}_C$-tree $\mathcal G_v(\sigma)\in\Omega \mathcal{O}_C$ as follows:
		\begin{itemize}
			\item \textbf{edges:} one edge for every pair $(w,i)$ with $w\in V_i$,
			$0\leqslant i\leqslant n$, coloured by the corolla $C_w$; the edge of
			$(v,0)$ is the root, the edges of $(w,n)$, $w\in V_n$, are the leaves;
			\item \textbf{vertices:} one vertex for every pair $(w,i)$ with $w\in V_i$,
			$0\leqslant i<n$, whose output edge is the edge of $(w,i)$ and whose input
			edges are the edges of $(w',i+1)$ for $w'\in iv(g_{i+1}(C_w))\subseteq
			V_{i+1}$;
			\item \textbf{decoration:} the vertex at $(w,i)$ is decorated by the tree
			$g_{i+1}(C_w)$, viewed as an operation of $\mathcal{O}_C$ with inputs
			$iv(g_{i+1}(C_w))$ and output $C_w$.
		\end{itemize}
	\end{definition}	

	\begin{remark}
		If $g_{i+1}$ does not expand $w$, the vertex at $(w,i)$ is decorated by the
		corolla $C_w$, i.e. by the unit of $\mathcal{O}_C$ at the colour $C_w$, and is
		deleted. 
		If $g_{i+1}$ contracts the unary vertex $w$, then $iv(g_{i+1}(C_w))=\varnothing$ and the vertex at $(w,i)$ is decorated by the trivial tree $\eta\in\mathcal{O}_C(C_w)$. This is a nullary operation of $\mathcal{O}_C$
		(not an identity), and the vertex is kept. 
		It has no input edge, so the edge of
		$(w,i)$ is not a leaf of $\mathcal{G}_v$. In particular $\mathcal{G}_v$ has no
		identity-decorated vertex, so it depends only on the class of
		$(g_1,\dots,g_n)$ modulo identity morphisms.
	\end{remark}
	
	\subsection{A comparison of two posets}\label{ssec:comppos}
	
	For $v\in iv(S)$ let $\mathcal{P}_{g,v}$ be the set of $\mathcal{O}_C$-trees $\mathcal{G}$ with
	$\eps(\mathcal{G})=g(C_v)$, partially ordered by edge collapse.
	That is,
	$\mathcal{G}'\leq\mathcal{G}$ iff $\mathcal{G}'$ is obtained from $\mathcal{G}$ by collapsing a set of inner edges, each collapse composing in $\mathcal{O}_C$ the decorations of the two vertices of the collapsed edge. 
	Such a collapse leaves the total composite
	unchanged, so $\mathcal{P}_{g,v}$ is the poset corresponding to $g(C_{v})$ from~\cite[Rem.~A.10]{clrrw-22}.
	By~\cite[Rem.~A.10]{clrrw-22}, the barycentric coordinates of the realization equate the lengths of the inner edges, i.e.
	\[
		\bigl|\mathcal{P}_{g,v}\bigr|\ \cong\ 	\eps^{-1}\bigl(g(C_v)\bigr)\subseteq W\mathcal{O}_C.
	\]

	\begin{remark}\label{rem:egeq}
		Composing the decorations from the leaves to the root gives
		$\eps(\mathcal{G}_v(\sigma))=g(C_v)$, so $\mathcal{G}_v(\sigma)\in P_{g,v}$.
		By Lemma~\ref{lem:bookkeeping} $(ii)$,
		\[
			L\bigl(\mathcal{G}_v(\sigma)\bigr)=
			iv\bigl(\eps(\mathcal{G}_v(\sigma))\bigr)=
			iv\bigl(g(C_v)\bigr).
		\]
	\end{remark}		
	
	\begin{lemma}\label{lm:pi}
		Let $g\colon S\to T$ in $\Omega_C$ and $v\in iv(S)$. 
		There is the morphism of posets
		\[
			\Path_\Omega(S,T)_g\longrightarrow\mathcal P_{g,v},\ 
			\sigma\mapsto\mathcal G_v(\sigma).
		\]
		Its realization
		\[
		\pi^v_g\colon K_g\rightarrow \lvert P_{g,v}\rvert\cong
		\eps^{-1}\bigl(g(C_v)\bigr)\subseteq W\mathcal{O}_C
		\]
		at a vertex $\sigma$ of $K_g$ is $\mathcal{G}_v(\sigma)$, i.e. a metric $\mathcal{O}_{C}$-tree with all inner edges of length $1$. 
	\end{lemma}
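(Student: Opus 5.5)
The claim has two parts. First, $\sigma\mapsto\mathcal G_v(\sigma)$ is well defined and order preserving from $\Path_\Omega(S,T)_g$ (ordered by refinement) to $\mathcal P_{g,v}$ (ordered by edge collapse). Second, the realization of this poset map, composed with the identification $|\mathcal P_{g,v}|\cong\eps^{-1}(g(C_v))$ of \cite[Rem.~A.10]{clrrw-22}, sends the vertex $\sigma$ to $\mathcal G_v(\sigma)$ with all inner edges of length $1$.

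\textbf{Well-definedness.} By Remark~\ref{rem:egeq}, $\eps(\mathcal G_v(\sigma))=g(C_v)$, so $\mathcal G_v(\sigma)\in\mathcal P_{g,v}$. By the remark after Definition~\ref{def:Gv}, $\mathcal G_v(\sigma)$ has no identity-decorated vertices. Hence inserting or deleting identity morphisms in $\sigma$ does not change it, and the map is defined on classes of factorizations modulo identities.

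\textbf{Monotonicity.} Since every refinement is a finite sequence of elementary refinements, I reduce to the case where $\sigma'$ is obtained from $\sigma=(g_1,\dots,g_n)$ by replacing a single $g_{i+1}$ with a factorization $h''h'$, where $h'\colon T_i\to T'$ and $h''\colon T'\to T_{i+1}$. Compare the two trees level by level.
\begin{itemize}
\item For each $w\in V_i$, the vertex $(w,i)$ of $\mathcal G_v(\sigma)$ is decorated by $g_{i+1}(C_w)$.
\item In $\mathcal G_v(\sigma')$ it is replaced by a vertex decorated by $h'(C_w)$, with outputs indexed by $w'\in iv(h'(C_w))$.
\item Above it sit the vertices decorated by $h''(C_{w'})$, joined to it along new inner edges $(w',\cdot)$.
\end{itemize}
Collapsing these new inner edges composes the decorations in $\mathcal O_C$. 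This composite is the substitution of the trees $h''(C_{w'})$ into the vertices of $h'(C_w)$, which equals $h''h'(C_w)=g_{i+1}(C_w)$. To see this, apply the decomposition $V_{i+1}=\coprod_{w\in V_i} iv(g_{i+1}(C_w))$ and Lemma~\ref{lem:bookkeeping}(i) to $h''$ restricted to $h'(C_w)$.

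A special case needs care: a $w'$ contracted by $h''$ gives a nullary $\eta$-decorated vertex, and a $w$ unchanged by $h'$ gives an identity vertex that is deleted. Both are consistent with collapse, since substituting $\eta$ into a unary vertex performs the contraction. Therefore $\mathcal G_v(\sigma)\le\mathcal G_v(\sigma')$. This is the direction matching the convention in which finer factorizations have more vertices in $K_g$, i.e.\ lie higher in $\Path_\Omega(S,T)_g$.

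\textbf{Vertex values.} A poset map induces a simplicial map of nerves, and hence a continuous map $\pi^v_g$ on realizations that is affine on simplices. It sends the $0$-simplex $\sigma$ to the $0$-simplex $\mathcal G_v(\sigma)$ of $|\mathcal P_{g,v}|$. It then remains to read off which point of $\eps^{-1}(g(C_v))$ a vertex of $|\mathcal P_{g,v}|$ corresponds to. Under the identification of \cite[Rem.~A.10]{clrrw-22}, a $0$-simplex $\mathcal G$ goes to the metric tree $\mathcal G$ with all inner lengths equal to $1$. Indeed, a chain $\mathcal G_0<\dots<\mathcal G_k$ with barycentric coordinates gives to each inner edge of $\mathcal G_k$ the total weight of the $\mathcal G_j$ in which that edge survives. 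For $k=0$ every surviving edge gets weight $1$.

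\textbf{Main obstacle.} The hardest step is the monotonicity check, namely the equality $h''h'(C_w)=\circ_{\mathcal O_C}\bigl(h'(C_w);(h''(C_{w'}))_{w'}\bigr)$, especially with degeneracies and the nullary $\eta$ decorations. I will verify it generator by generator: inner face, outer face, and degeneracy for $h''$.
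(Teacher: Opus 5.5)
Your proposal is correct and follows essentially the same route as the paper. Monotonicity comes from the observation that a refinement subdivides the vertices of $\mathcal{G}_v$ at the split stage, so the coarser tree is recovered by collapsing the new inner edges and composing decorations in $\mathcal{O}_C$; the vertex values are read off from the barycentric length formula of \cite[Rem.~A.10]{clrrw-22}, which is affine on simplices and gives length $1$ at a vertex. You spell out bookkeeping the paper leaves implicit: well-definedness modulo identities, the reduction to elementary refinements, and the substitution identity $h''h'(C_w)=h'(C_w)\circ_{\mathcal{O}_C}\bigl(h''(C_{w'})\bigr)_{w'}$.
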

	\begin{proof}
		Refining a factorization replaces a stage $g_{i+1}$ by a composite
		$g'\circ g''$, which subdivides the vertices of $\mathcal{G}_v$ at that stage;
		conversely, composing two adjacent stages collapses the corresponding inner
		edges of $\mathcal{G}_v$ and composes their decorations in $\mathcal{O}_C$. 
		Hence the assignment is monotone. 
		Under the identification
		$\lvert P_{g,v}\rvert\cong
		\eps^{-1}(g(C_v))$ of~\cite[Rem.~A.10]{clrrw-22}, a point of the simplex spanned by a chain
		$\sigma_0\leqslant\dots\leqslant\sigma_k$ with barycentric coordinates
		$(t_0,\dots,t_k)$ has lengths
		\begin{equation}\label{eq:affine}
			\len(a)=\sum_{i\colon a\in ie^{O}(G_v(\sigma_i))}t_i,
			\ a\in ie^{O}\bigl(\mathcal{G}_v(\sigma_k)\bigr),
		\end{equation}
		which is affine in the $t_i$ and equals $1$ at a vertex.
	\end{proof}
	
	\subsection{A comparison of two embeddings}\label{ssec:compemb}
	
	\begin{definition}\label{def:hoflat}
		Define the \emph{homotopy flattening functor} to be the composite
		\[
		\mathrm{Flat}^h:=\Psi^{h}\circ R\colon
		\Top^{\underline{\Omega}\mathcal{O}_{C}^{op}}_{\mathrm{strict}}
		\longrightarrow
		\Top^{\widetilde{\Omega}^{op}}_{strict}.
		\]
	\end{definition}
	
	Since $R$ is
	an isomorphism of categories by
	Theorem~\ref{thm:halgdiag} and $\Psi^{h}$ is a full embedding by
	\cite[Thm.~B.6]{clrrw-22}, $\mathrm{Flat}^h$ is a full embedding as well. 
	The next proposition identifies it explicitly and proves the comparison with
	$\Psi^{h}$.
	For $T\in\Omega_{C}$ let $C(T)\in\Omega\mathcal{O}_{C}$ be the single corolla decorated by $T$.
	Its leaves are $iv(T)$ by Lemma~\ref{lem:bookkeeping} $(ii)$.
	Let 
	\[
	\pi_g:=
	(\pi^v_g)_{v\in iv(S)}\colon K_{g}\to \prod \lvert P_{g,v}\rvert
	\] 
	for any $g\in \Omega_C (S,T)$.
	
	\begin{theorem}\label{thm:comparison}
		The following diagram commutes:
			\[
		\begin{tikzcd}[column sep=huge, row sep=large]
			\Alg(W\mathcal{O}_C,\Top)\arrow{r}{N^h_{\mathcal{O}_C}}\arrow[swap]{dr}{\Psi^{h}}
			& \Top^{(\underline\Omega \mathcal{O}_C)^{op}}_{\mathrm{strict}}\arrow{d}{\Flat^h}\\
			& \Top^{\widetilde\Omega^{op}}_{\mathrm{strict}}
		\end{tikzcd}
		\]
		Moreover, on objects one has $\mathrm{Flat}^h(X)(T)=X(C(T))$.
		The map $\Flat^h(X)(\sigma)$ for $\sigma\in K_g$, $g\colon S\to T$ decomposes as follows:
		\begin{equation}\label{eq:flath}
			\begin{tikzcd}[row sep=large, column sep=large]
			X\bigl(C(T)\bigr) \arrow{r}{\cong} &
			X(\mathcal{T}_g)\times\prod_{w\in P_g}X(\eta_{C_w})\arrow{r}{pr} &
			X(\mathcal{T}_g)\arrow{r}{X(j_g,\pi_g(\sigma))} &
			X\bigl(C(S)\bigr).
			\end{tikzcd}
		\end{equation}
		Here, $\mathcal{T}_g=\mathcal{T}_g(\sigma)\in\Omega\mathcal{O}_C$ is the $\mathcal{O}_C$-tree whose root vertex is decorated
		by $S$ and which has $\mathcal{G}_v(\sigma)$ grafted at the leaf $v$ of $C(S)$.
		Then $j_g\colon C(S)\to \mathcal{T}_g$ is the inclusion of the root corolla, the first map
		is the strict Segal identification and the second one is the projection
		omitting the factors indexed by the pruned vertices.
	\end{theorem}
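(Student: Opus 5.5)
Commutativity of the triangle follows from the definition $\Flat^h=\Psi^h\circ R$ together with $R=(N^h_{\mathcal{O}_C})^{-1}$ (Theorem~\ref{thm:halgdiag}): $\Flat^h\circ N^h_{\mathcal{O}_C}=\Psi^h\circ R\circ N^h_{\mathcal{O}_C}=\Psi^h$. The real content is the explicit description. So I will compute $\Psi^h(R(X))$ directly and rewrite every ingredient through $X$ via the strict Segal condition (Definition~\ref{def:segal}) and Lemma~\ref{lm:ssegal}.

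\textbf{Objects.} Set $A=R(X)$, so $A(C_w)=X(\eta_{C_w})$. By \S\ref{ssec:strictop}–\ref{ssec:homop}, $\Psi^h(A)(T)=\prod_{w\in iv(T)}A(C_w)$. The corolla $C(T)$ has leaf set $iv(T)$ by Lemma~\ref{lem:bookkeeping}~$(ii)$. The Segal condition therefore gives $X(C(T))\cong\prod_{w\in iv(T)}X(\eta_{C_w})$, and this is the identification $\Flat^h(X)(T)=X(C(T))$.

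\textbf{Morphisms.} Fix $g\colon S\to T$ and a point $\sigma\in K_g$. By \cite[(B.1)]{clrrw-22}, the $v$-th component of $\Psi^h(A)(\sigma)$ is $\alpha_v(\sigma,-)$. By \S\ref{ssec:homop} this equals the $W\mathcal{O}_C$-action of the element $\widehat\alpha_v(\sigma)\in\eps^{-1}(g(C_v))$ on the inputs indexed by $iv(g(C_v))$. I will check, using Lemma~\ref{lm:pi} and the realization formula~\eqref{eq:affine}, that $\widehat\alpha_v=\pi^v_g$. This is the step where the definitions of \cite{clrrw-22} must be matched with ours. At vertices of $K_g$ both maps give $\mathcal{G}_v(\sigma)$ with all inner lengths equal to $1$, and both are affine on simplices. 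I will verify this vertex identification by unwinding (B.1) on an unrefined factorization and using the concatenation in $\widetilde\Omega$.

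Given this, the action of $\widehat\alpha_v(\sigma)=(\mathcal{G}_v(\sigma),\pi^v_g(\sigma))$ is $\phi_{\mathcal{G}_v(\sigma)}(\pi^v_g(\sigma),-)$. By Definition~\ref{def:nhp} and Lemma~\ref{lm:leafcomp}, this map is $X(\iota,\mu)$ for the root inclusion of $\eta_{C_v}$ into $\mathcal{G}_v(\sigma)$ with metric $\pi^v_g(\sigma)$. Now consider the tree $\mathcal{T}_g$ obtained by grafting all $\mathcal{G}_v(\sigma)$ onto $C(S)$. By Lemma~\ref{lm:branches}~$(ii)$ its branches over the leaves $v$ of $C(S)$ are exactly the $\mathcal{G}_v(\sigma)$. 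Lemma~\ref{lm:ssegal} then shows that the $v$-th component of $X(j_g,\pi_g(\sigma))$ is precisely this map, with the metric restricted to the branch.

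It remains to check two bookkeeping points. First, by Remark~\ref{rem:egeq} the leaves of $\mathcal{T}_g$ are $\coprod_v iv(g(C_v))$. Second, by Lemma~\ref{lem:bookkeeping}~$(i)$ these leaves, together with $P_g$, exhaust $iv(T)$. The Segal identification of $X(C(T))$ therefore splits as $X(\mathcal{T}_g)\times\prod_{w\in P_g}X(\eta_{C_w})$. The pruned factors are discarded in $\Psi^h$, since outer faces act by projection as in \S\ref{ssec:strictop}, which accounts for $pr$. This gives the factorization~\eqref{eq:flath}.

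\textbf{Main obstacle.} I expect the hard step to be the identification $\widehat\alpha_v=\pi^v_g$, i.e. matching the barycentric parametrization of $K_g$ with the metric on $\mathcal{G}_v(\sigma)$ across faces of simplices. The unit deletions in $\mathcal{G}_v$ (the Remark after Definition~\ref{def:Gv}) must correspond to degenerate factorizations. I would first treat the case where $g$ is a single inner face or degeneracy, where $K_g$ is a point. I would then induct on the length of a maximal factorization, using concatenation in $\widetilde\Omega$ and the $\ast$-composition of Definition~\ref{def:thick}.
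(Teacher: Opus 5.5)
Your proposal follows the paper's proof essentially step by step: commutativity from $\Flat^h=\Psi^h\circ R$, the object identification via the Segal condition and Lemma~\ref{lem:bookkeeping}~$(ii)$, the leaf bookkeeping $L(\mathcal{T}_g)=iv(T)\setminus P_g$ via Remark~\ref{rem:egeq} and Lemma~\ref{lem:bookkeeping}~$(i)$, Lemma~\ref{lm:ssegal} to reduce the morphism part to $\widehat{\alpha}_v=\pi^v_g$, and then affineness on simplices plus a check at the vertices of $K_g$. The one difference is that the paper does the vertex check directly, observing that the level-by-level tree attached to $\sigma=(g_1,\dots,g_n)$ in \cite[(B.1)]{clrrw-22} is literally $\mathcal{G}_v(\sigma)$, so your fallback induction is unnecessary; its base case is also doubtful, since with degeneracies present $K_g$ need not be a point even for a single face or degeneracy (e.g.\ $s^0=s^0\circ d^0\circ s^0$ among linear trees gives a nontrivial refinement of $(s^0)$).
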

	\begin{proof}
		The claim on commutativity follows trivially from definition~\ref{def:hoflat}. 
		For the explicit description put
		$A:=R(X)$, so that $X=N^h_{\mathcal{O}_C}A$ and $X(\eta_{C_w})=A(w)$ for
		$w\in iv(T)$.
		By the strict Segal condition one has
		\[
		X(C(T))=
		\prod_{l\in L(C(T))}A(l)=
		\prod_{w\in iv(T)}A(w)=
		\Psi^{h}(A)(T),
		\] 
		which
		is the claim on objects and the object part of the diagram.
		
		Let $\sigma\in K_g$. By Remark~\ref{rem:egeq} and Lemma~\ref{lem:bookkeeping} $(i)$,
		\[
		L(\mathcal{T}_g)=
		\coprod_{v\in iv(S)} L\bigl(\mathcal{G}_v(\sigma)\bigr)=
		\coprod_{v\in iv(S)} iv\bigl(g(C_v)\bigr)= iv(T)\setminus P_{g},
		\]
		so the first two maps of \eqref{eq:flath} are exactly the projection away
		from the pruned vertices of \cite[(B.1)]{clrrw-22}. The pair
		$(j_g,\pi_g(\sigma))$ is a morphism of $\underline{\Omega}{\mathcal{O}_C}$, the branch of $\mathcal{T}_g$
		at the leaf $v$ of $C(S)$ being $\mathcal{G}_v(\sigma)$.
		By Lemma~\ref{lm:ssegal}
		its $v$-component is
		\[
		X\bigl(\iota_{\mathcal{T}_{g}[v]},\pi^v_g(\sigma)\bigr)=
		\varphi_{\pi^v_g(\sigma)}\colon
		\prod_{w\in iv(g(C_v))}A(w)\rightarrow A(v),
		\]
		i.e. the action of the $W\mathcal{O}_C$-operation $\pi^v_g(\sigma)$.
		It therefore suffices to verify in $\eps^{-1}\bigl(g(C_v)\bigr)$ the identity
		\begin{equation}\label{eq:opstar}
			\widehat{\alpha}_v(\sigma)=
			\pi^v_g(\sigma),\ 
			v\in iv(S).
		\end{equation}
	
		Both sides of~\eqref{eq:opstar} are realizations of maps of posets defined on
		$\Path_\Omega(S,T)_g$.
		The left-hand side by~\cite[App.~B]{clrrw-22}, where
		$\alpha_v$ is obtained by realizing the assignment sending a factorization to the
		corresponding decorated tree, a refinement of factorizations corresponding to a collapse
		of the edges at the added levels.
		For the right-hand side, Lemma~\ref{lm:pi} together with
        (\ref{eq:affine}) shows directly that $\pi_g^{v}$ is affine on each simplex of $K_g$. Hence, to verify (\ref{eq:opstar}), it is enough to check the equality at the vertices $\sigma=(g_1,\dots,g_n)$ of $K_g$.
 		All the respective inner edges have length $1$ on both sides by Lemma~\ref{lm:pi} and by~\cite[(B.1)]{clrrw-22}.
		Hence,~\eqref{eq:opstar} reduces to an equality of decorated $\mathcal{O}_C$-trees. 
		The tree underlying $\widehat{\alpha}_v(\sigma)$ is obtained from the restriction
		\[
		C_v\rightarrow g_1(C_v)\rightarrow\dots\rightarrow g(C_v)
		\subseteq T
		\]
		of $\sigma$ to the respective corolla by attaching at height $i+1$ one vertex for every vertex $w$ of
		$(g_i\cdots g_1)(C_v)$, decorated by its image $g_{i+1}(C_w)$.
		The respective levels decorated by identities being deleted. 
		The resulting $\mathcal{O}_{C}$-tree is precisely $\mathcal{G}_v(\sigma)$ of definition~\ref{def:Gv}. 
		(Notice that the deletion of the identity-decorated levels takes place at the
		level of factorizations, i.e. before taking the sum in~\eqref{eq:affine}. 
		In particular no relation of the $W$-construction is invoked on interpolated
		lengths.)
		This proves \eqref{eq:opstar}, hence the proposition.
	\end{proof}
	
	
	\vskip 1em
	
	\begin{figure}
		\begin{center}    
			\begin{tikzpicture}[scale=0.8, every node/.style={font=\small}]
				
				\coordinate (v1) at (0,1.2);
				\coordinate (v2) at (1,0.2);
				
				\coordinate (a1) at (-1.0,2.5);
				\coordinate (a2) at (0.4,2.5);
				
				\coordinate (b1) at (1,2.5);
				\coordinate (b2) at (2,2.5);
				\coordinate (b3) at (3,2.5);
				
				\coordinate (r) at (1,-1.2);
				
				\draw[thick] (a1) -- (v1);
				\draw[thick] (a2) -- (v1);
				\draw[thick] (v1) -- (v2);
				
				\draw[thick] (b1) -- (v2);
				\draw[thick] (b2) -- (v2);
				\draw[thick] (b3) -- (v2);
				
				\draw[thick] (v2) -- (r);
				
				\fill (v1) circle (5pt);
				\fill (v2) circle (5pt);
				
				\coordinate (A1) at (-1,0.6);
				
				\coordinate (v) at ($(0,0) + (A1)$);
				\coordinate (u) at ($(-0.25,0.5) + (A1)$);
				
				\coordinate (l1) at ($(-0.5,1) + (A1)$);
				\coordinate (l2) at ($(0.5,1) + (A1)$);
				\coordinate (r)  at ($(0,-0.7) + (A1)$);
				
				\draw[thick] (l1) -- (u);
				\draw[thick] (u) -- (v);
				\draw[thick] (l2) -- (v);
				\draw[thick] (v) -- (r);
				\draw[dotted, thick] (v) -- (v1);
				
				\fill (v) circle (4pt);
				\fill (u) circle (4pt);
				
				
				\coordinate (A2) at (2.5,-0.2);
				
				\coordinate (w2) at ($(0.25,0.5) + (A2)$);
				\coordinate (u2) at ($(0,0) + (A2)$);
				
				\coordinate (l02) at ($(-0.5,1) + (A2)$);
				\coordinate (l12) at ($(0,1) + (A2)$);
				\coordinate (l22) at ($(0.5,1) + (A2)$);
				\coordinate (r2)  at ($(0,-0.7) + (A2)$);
				
				\draw[thick] (l02) -- (u2);
				\draw[thick] (l12) -- (w2);
				\draw[thick] (w2) -- (u2);
				\draw[thick] (l22) -- (u2);
				\draw[thick] (u2) -- (r2);
				\draw[dotted, thick] (u2) -- (v2);
				
				\fill (u2) circle (4pt);
				\fill (w2) circle (4pt);
				
				
				
				\coordinate (T) at (8,0);
				
				\coordinate (v1r) at ($(0,1.2) + (T)$);
				\coordinate (v2r) at ($(1,0.2) + (T)$);
				\coordinate (v3r) at ($(v1r) + (-0.55,0.65)$);
				\coordinate (v4r) at ($(v2r) + (0.9,1)$);
				
				\coordinate (a1r) at ($(-1.0,2.5) + (T)$);
				\coordinate (a2r) at ($(0.4,2.5) + (T)$);
				
				\coordinate (b1r) at ($(1,2.5) + (T)$);
				\coordinate (b2r) at ($(1.8,2.5) + (T)$);
				\coordinate (b3r) at ($(3,2.5) + (T)$);
				
				\coordinate (rr) at ($(1,-1.2) + (T)$);
				
				\draw[thick] (a1r) -- (v1r);
				\draw[thick] (a2r) -- (v1r);
				\draw[thick] (v1r) -- (v2r);
				
				\draw[thick] (b1r) -- (v2r);
				\draw[thick] (b2r) -- (v4r);
				\draw[thick] (b3r) -- (v2r);
				
				\draw[thick] (v2r) -- (rr);
				
				\fill (v1r) circle (5pt);
				\fill (v2r) circle (5pt);
				\fill (v3r) circle (5pt);
				\fill (v4r) circle (5pt);
				
			\end{tikzpicture}
			\caption{Flattening functor example}\label{fig:flat}
		\end{center}
	\end{figure}
	
	
	\begin{remark}[compatibility with the strict case]\label{rem:strict}
		By Proposition~\ref{pr:dalg} we may also write $\Flat:=\Psi\circ N_{\mathcal{O}_{C}}^{-1}$ for the
		strict analogue of $\Flat^h$, e.g. see figure~\ref{fig:flat}.
		Let $\eps^*\colon\Alg(\mathcal{O}_C,\Top)\to\Alg(W\mathcal{O}_C,\Top)$ be the pullback
		along the augmentation, $\pi^*$ the pullback along
		$\pi\colon\underline{\Omega}\mathcal{O}_C\to\Omega\mathcal{O}_C$, and $\rho^*$ the pullback along
		$\rho\colon\Omega\to\Omega_C$, $K_g\mapsto g$. 
		Then
		\[
		N^h_{\mathcal{O}_C}\circ\eps^*=\pi^*\circ N_{\mathcal{O}_C},\ 
		\Flat^h\circ\pi^*=
		\rho^*\circ\Flat.
		\]
		The first identity is immediate from definition~\ref{def:nhp}: for
		$A=\eps^*A_0$ the map $\varphi_{(T,\mu)}$ is the action of
		$\eps(T)$ and does not depend on $\mu$. 
		For the second, let $X=\pi^*Y$; then $X(j_g,\pi_g(\sigma))=Y(j_g)$ does not depend on $\sigma\in K_g$, and any two factorizations of $g$ are connected by
		refinements, which are composites of inner faces of $\Omega\mathcal{O}_C$
		collapsing each $\mathcal{G}_v(\sigma)$ onto the corolla decorated by $g(C_v)$.
		By Lemma~\ref{lm:ssegal} the presheaf $Y$ takes the same value on all of
		them, namely the one prescribed by $\Psi$. 
	\end{remark}
	
	\section{Applications and concluding remarks}\label{sec:app}
	
	In this section, we provide an algebraic model of cellular chains for any topological operad with CW objects by the above CW-approximation.
	Then we mention several applications of CW-approximation to some well-known topological operads and diagrams, as well as their morphisms.
	
	\begin{corollary}
		Let $O$ be a topological $C$-colored operad with a CW complex in each arity. 
		Then there exists a strict dg-operad $C_{*}^{cw}(O_{cw})$ of cellular chains that is quasi-isomorphic to $C_{*}(O)$.
	\end{corollary}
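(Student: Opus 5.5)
The plan is to combine the rectification statement, Theorem~\ref{thm:cwapp} with $P=\mathcal{O}_C$ (case (ii) of the list after Theorem~\ref{thmm:cwapp}), with cellular chains. Regard $O$ as a strict algebra $O\in\Alg(\mathcal{O}_C,\Top)$. Each color of $\mathcal{O}_C$ is a $C$-colored corolla, and its value is the corresponding arity component $O(c_1,\dots,c_n;c)$, which is a CW complex by hypothesis. Theorem~\ref{thm:cwapp} then gives a homotopy isomorphism $O\Rightarrow O_{cw}:=\widetilde{A}_{cw}$, where $O_{cw}$ is a strict CW $\mathcal{O}_C$-algebra, i.e.\ a strict topological $C$-colored operad whose composition maps
\[
O_{cw}(\dots)\times O_{cw}(\dots)\to O_{cw}(\dots)
\]
and units are cellular for the product CW structures. (Units correspond to nullary operations of $\mathcal{O}_C$.)

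Next I apply cellular chains. For CW complexes $X,Y$ in CGWH spaces, the product cell structure gives a natural isomorphism of chain complexes
\[
C_*^{cw}(X\times Y)\cong C_*^{cw}(X)\otimes C_*^{cw}(Y),
\]
compatible with the symmetry and associativity isomorphisms up to the usual Koszul signs. A cellular map $f$ induces a chain map $C_*^{cw}(f)$, functorially. Hence $C_*^{cw}$ is a strong monoidal functor from CW complexes with cellular maps to chain complexes. Applying it to the structure maps of $O_{cw}$ yields a strict dg-operad $C_*^{cw}(O_{cw})$. Because the operad is nonsymmetric, no equivariance checks are needed; associativity and unit axioms are preserved by functoriality and monoidality.

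For the quasi-isomorphism, I would produce a zigzag of quasi-isomorphisms of dg-operads rather than a single map. There are two steps.

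\begin{itemize}
\item First, on a CW operad, cellular and singular chains are connected by natural quasi-isomorphisms. A standard route uses $C_*^{cw}(X)\simeq$ the filtration-quotient description via $H_*(X^n,X^{n-1})$. A more convenient route passes through $C_*(|\Sing X|)$ or uses the Eilenberg--Zilber maps. Because Eilenberg--Zilber is only lax/oplax monoidal, I would compare $C_*^{cw}(O_{cw})$ and $C_*(O_{cw})$ as operads up to homotopy, i.e.\ via $W$ or cobar--bar resolutions of dg operads.
\item Second, the homotopy isomorphism $O\Rightarrow O_{cw}$ is a $W(\mathcal{O}_C\otimes_{BV}\Delta^1)$-algebra. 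Its components are homotopy equivalences, so after rectification (Lemma~\ref{lm:rectzag}) it gives a zigzag of strict operad maps that are weak equivalences arity-wise. Singular chains $C_*$ are lax monoidal via Eilenberg--Zilber, so they take this zigzag to a zigzag of dg-operad quasi-isomorphisms $C_*(O)\simeq C_*(O_{cw})$, in the homotopy category of dg-operads.
\end{itemize}

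The main obstacle is the first step: making the comparison $C_*^{cw}(O_{cw})\simeq C_*(O_{cw})$ operadic, since the natural chain maps between cellular and singular chains are not strictly monoidal. I would first try the comparison through the intermediate functor
\[
X\mapsto C_*\bigl(\{X^{(n)}\}\bigr),
\]
the singular chains filtered by skeleta, whose spectral sequence collapses to cellular chains. Cellular maps preserve this filtration, and Eilenberg--Zilber respects filtrations. This gives filtered lax monoidal comparisons that induce quasi-isomorphisms. If that fails, I would invoke the homotopy invariance of algebras over cofibrant dg-operads to transfer the structure along quasi-isomorphisms.
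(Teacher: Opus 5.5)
Your proposal is correct and follows the paper's proof. You rectify via Corollary~\ref{corm:rect} to a strict CW operad $O_{cw}$, whose cellular chains form a strict dg-operad because cellular chains are strong monoidal on cellular maps. You then apply singular chains to the zigzag of weak equivalences $O\leftarrow R\to O_{cw}$, and splice the result with the standard cellular--singular zigzag $C_*(O_{cw})\leftarrow Q\to C_*^{cw}(O_{cw})$. Your skeletal-filtration idea is exactly what makes the paper's unnamed $Q$ operadic: take $Q_p(X)=\{c\in C_p(X^{(p)})\colon \partial c\in C_{p-1}(X^{(p-1)})\}$. The Eilenberg--Zilber map and cellular maps preserve this subcomplex, so both arrows are quasi-isomorphisms of dg operads.
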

	\begin{proof}
		By corollary~\ref{corm:rect}, there is the well-defined operad $C_{*}^{cw}(O_{cw})$ in chains and a zigzag of quasi-isomorphisms
		\[
		\begin{tikzcd}
			C_{*}(O) & C_{*}(R)\arrow[swap]{l}{\sim}\arrow{r}{\sim} & C_{*}(O_{cw}).
		\end{tikzcd}
		\]
		By collating it with the standard zigzag of quasi-isomorphisms
		\[
		\begin{tikzcd}
			C_{*}(O_{cw}) & Q\arrow[swap]{l}{\sim}\arrow{r}{\sim} & C_{*}^{cw}(O_{cw}),
		\end{tikzcd}
		\]
		one obtains the claim.
	\end{proof}
	
	There are well-known CW structures on little $n$-disks $\mathcal{E}_{n}$ operad (by its Barratt-Eccles cellular model), in particular, Stasheff associahedra $\mathcal{A}_{\infty}$~\cite{be-fr-04}.
	We proceed with two examples of CW-approximation for operads, which seem to be new.
	
	\begin{example}
		It was proved in~\cite{clrrw-22} that the normalized cacti operad is a homotopy operad, denoted by $\Cact^{1}$.
		A CW structure on the objects of the normalized cacti operad was constructed in~\cite{ka-07}.
		However, the homotopy operad is not CW with respect to Kaufmann's CW structure on its objects.
		By Theorem~\ref{thmm:cwapp}, we obtain a CW homotopy operad and, by rectification (Theorem~\ref{thm:cwapp}), its strict CW  operad $\Cact^{1}_{cw}$, together with a homotopy isomorphism $\Cact^{1}\Rightarrow \Cact^{1}_{cw}$.
	\end{example}
	
	\begin{example}
		The real Fulton-MacPherson operad $FM_{n}$ (i.e. a compactification of the configuration space for pairwise distinct points in $\R^{n}$) consists of manifolds with corners, which admit a natural cell stratification.
		A stratification was constructed  in~\cite{ge-jo-94}, where it was claimed that the operad is cellular.
		However, this was incorrect, as pointed out by Tamarkin.
		In~\cite{sa-22}, it was proved that the real Fulton-MacPherson operad $FM_{2}$ is a CW operad.				
		As in the previous example, we obtain a CW homotopy operad (with respect to a given CW structure on objects, e.g. as in~\cite{ge-jo-94}) $(FM_{n})_{cw}$ and a homotopy isomorphism $FM_{n}\Rightarrow (FM_{n})_{cw}$.
		Our approach bypasses the need for explicit geometric constructions and is model category-free.
	\end{example}
	
	For other interesting examples of topological operads, we mention the operad of framed little disks $fD_{n}$, etc. 
	
	\begin{example}
		A topological $1$-colored homotopy algebra $X$ over $\Ass^{\otimes_{BV} n}$ is called an $E_{n}$-space (e.g. see~\cite{ma-sh-st-00}).
		May's recognition principle~\cite{ma-72} states that a connected topological space $X$ (with homotopy type of a CW complex) admits a structure of a grouplike $E_n$-algebra iff it is weakly equivalent to $n$-th iterated loop space of another space $Y$, i.e. $X\sim \Omega^{n} Y$.
		Any loop space such as $S^{1}\simeq\Omega \C P^{\infty}$, $S^{3}\simeq \Omega \Hc P^{\infty}$ or $\C P^{\infty}\simeq \Omega K(\Z,3)$ has a structure of $E_{1}$-space also called an $A_{\infty}$-space (or classically, a homotopy associative $H$-space).
		The standard two-cell CW structure on a sphere is not preserved by such an $A_{\infty}$-structure.
		However, an application of Theorem~\ref{thmm:cwapp} produces a homotopy isomorphic $A_{\infty}$-structure on the same space that is CW in the above sense.
		Regarding $\C P^{\infty}\simeq K(\Z,2)$, it possesses a strict $H$-space structure (e.g. by considering the realization of a simplicial model for an abelian EML space~\cite{ma-67}).
		One can consider the standard model of $\C P^{\infty}=S^{\infty}/\C^{\times}$, equipped with Segre embeddings for the monoidal structure.
		However, this runs into the problem of building higher homotopies. 
		Namely, a comparison of two compositions leads to permuting homogeneous coordinates up to homotopy.
		Theorem~\ref{thmm:cwapp} resolves this problem by applying it to the already existing algebra and 
		coherently modifying its maps by homotopies, thus resulting in a structure that is compatible with any given CW structure in the above sense.
	\end{example}
	
	\begin{example}
		As noted by Hackney~\cite{ha-18}, open configuration spaces $\Conf_{k} D^{n}$ lack a strict operadic composition and degeneracies, restricting them to a Segal semi-dendroidal space. 
		However, because the fibers of the forgetful map $E_{n}\to \Conf_{k} D^{n}$ are convex open polytopes, $\Conf_{k} D^{n}$ is a strong deformation retract of $E_{n}$. 
		The homotopy invariance of the $W$-construction that is classically established for strong deformation retracts~\cite[Ch.3]{bo-vo-73},~\cite[\S4]{be-mo-06'}, can be executed precisely here via Theorem~\ref{thm:gen} (the generalized form of Theorem~\ref{thmm:cwapp}). 
		Specifically, by viewing the strict operad $E_n$ as a $W\mathcal{O}_{C}$-algebra, Theorem~\ref{thm:gen} transfers this structure along the homotopy equivalence, endowing $\Conf_{k} D^{n}$ with a (CW) $W\mathcal{O}_{C}$-algebra structure and a homotopy isomorphism to $E_n$.
		Applying~\cite[Theorem~B.6]{clrrw-22} (or Theorem~\ref{thmm:preshemb}) to this algebra yields a strict Segal presheaf on $\underline{\Omega} \mathcal{O}_{C}$, and hence on $\Omega \mathcal{O}_{C}$ by applying the flattening functor. 
		Unlike the semi-dendroidal model, this resolves the obstruction to the existence of degeneracies, providing a full albeit (non-canonical) homotopy dendroidal model for open configuration spaces.
	\end{example}
	
	\begin{example}
		By~\cite{be-mo-06'}, diagrams  $I\to \Top$ are in bijection with topological algebras over the unary operad $\Diag I$ in sets, to which the above CW-approximation therefore applies.
		(Notice there is no known satisfactory functorial CW approximation for topological spaces, in general, except for the one given by the Quillen adjunction $(|-|,\Sing)$.) 				
		Given a (strict) diagram $D\colon I\to\Top$ with CW complexes, Theorem~\ref{thmm:cwapp} provides a homotopy isomorphism $D\to D_{cw}$ with a homotopy coherent CW diagram $D_{cw}$.
		After applying rectification, one obtains a zigzag of homotopy equivalences
		\[
		\begin{tikzcd}
			D & 		R\arrow[swap]{l}{\sim}\arrow{r}{\sim} & \widetilde{D}_{cw}
		\end{tikzcd},\ 
		\widetilde{D}_{cw}=\eps_{!}D_{cw}.
		\]
		(Here, notice that the arrows are not strictly invertible, in general. 
		Indeed, the arrows are objectwise homotopy equivalences, whose inverse maps may not commute with the arrows of the diagrams in question.)
		The diagrams $D$, $\widetilde{D}_{cw}$ have homotopy equivalent homotopy colimits, say, $X$.
		We expect that the homotopy invariants of $X$ such as Massey products or formality can be effectively studied in terms of cellular higher homotopies in $\widetilde{D}_{cw}$.
		For one possibly interesting source of examples we indicate diagrams of Eilenberg-MacLane spaces of height $1$ (i.e. of compact tori) whose colimits yield toric varieties. 
	\end{example}	
	
	\begin{example}
		Let $f\colon A\to B$ be a morphism of topological $P$-operads with CW objects.
		In other words, $f$ is a topological algebra over $P\otimes_{BV} \Delta^{1}$ with suitable restrictions to trees with black vertices at leaves ($A$) and root ($B$), respectively, and satisfying strict infinitesimal associativity.
		Then application of Theorem~\ref{thmm:cwapp} yields a homotopy CW morphism $f_{cw}\colon A_{cw}\Rightarrow B_{cw}$ between homotopy CW algebras, related by a homotopy isomorphism $f\Rightarrow f_{cw}$ (i.e. a homotopy algebra over $P\otimes_{BV} \Delta^{1}\otimes_{BV} \Delta^{1}$) restricting to homotopy isomorphisms $A\Rightarrow A_{cw}$ and $B\Rightarrow B_{cw}$.
		Furthermore, if the given algebras $A,B$ are already CW, then one constructs a homotopy CW morphism $f'_{cw}\colon A\Rightarrow B$ related to $f$ by a homotopy isomorphism $f\Rightarrow f'_{cw}$.
		To do so, in the proof of Theorem~\ref{thmm:cwapp} one starts with a given set of trivial higher homotopies.
	\end{example}

	\begin{remark}\label{rem:symcase}
		A similar argument to~\cite[Theorem B.6]{clrrw-22} provides an embedding of homotopy algebras over symmetric colored operads.
		This is a version of Theorem~\ref{thm:halgdiag} (i.e. Theorem~\ref{thmm:preshemb} of the introduction) for symmetric operads.
		The necessary changes to the above proof in the symmetric case consist of fixing the symmetric group labeling of $P$-trees and including the equivariance of the infinitesimal structure with respect to it.
		We leave details to be filled by an interested reader.
		On the other hand, we are not aware of any viable strategy to generalize Theorem~\ref{thm:gen} (i.e. Theorem~\ref{thmm:cwapp}) to symmetric operads using equivariant $G$-CW-approximation with respect to the symmetric group $G$-action.
	\end{remark}

	\begin{remark}
		Notice that Proposition~\ref{pr:dalg} and Theorem~\ref{thm:halgdiag} in the particular case of $P=Q\otimes_{BV}\Delta^{1}$ give an embedding of strict (homotopy, respectively) morphisms of strict (homotopy, respectively) $Q$-algebras to the presheaf category.
		This is a different construction of morphisms from that in Theorem~\ref{thmm:cwapp}, because the image of the embedding lacks an explicit description. 
		Concretely, $\Omega(Q\otimes_{BV}\Delta^{1})$ is different from $\Omega Q\times\Delta^{1}$.
	\end{remark}

\end{document}